\documentclass[reqno, 12pt]{amsart}
\usepackage{a4wide}
\usepackage{amsmath}
\usepackage{amsfonts}
\usepackage{amsthm}
\usepackage{amssymb}
\usepackage{cite}
\usepackage{graphicx}
\usepackage{graphics}
\usepackage{bm}
\usepackage{dsfont}
\usepackage{hyperref}
\usepackage{xcolor}
\usepackage[font=footnotesize]{caption}
\usepackage[all]{xy}
\numberwithin{equation}{section}
\newtheoremstyle{personal}%
{12pt}
{12pt}
{\slshape}
{}
{\bfseries}
{.}
{.5em}
{}
\theoremstyle{personal}%
\newtheorem{thm}{Theorem}[section]

\newtheorem{lem}[thm]{Lemma}
\newtheorem{prop}[thm]{Proposition}

\theoremstyle{definition}
\newtheorem{definition}[thm]{Definition}
\newtheorem{rem}[thm]{Remark}


\newcommand{\ud}{\,\mathrm{d}}
\newcommand{\Cont}{\mathrm{Cont}}
\newcommand{\id}{\mathrm{id}}
\newcommand{\crit}{\mathrm{crit}}
\renewcommand{\emptyset}{\varnothing}
\newcommand{\N}{\mathds{N}}
\newcommand{\Z}{\mathds{Z}}
\newcommand{\T}{\mathds{T}}
\newcommand{\R}{\mathds{R}}
\newcommand{\C}{\mathds{C}}
\newcommand{\la}{\left\langle}
\newcommand{\ra}{\right\rangle}

\newcommand{\supp}{\textup{supp}}

\newcommand{\hamc}{\textup{Ham}_{c}}
\newcommand{\contoc}{\textup{Cont}_{0}}

\begin{document}

\title{A contact camel theorem}

\author[S. Allais]{Simon Allais}
\address{Simon Allais,
\'Ecole Normale Sup\'erieure de Lyon,
UMPA\newline\indent  46 all\'ee d'Italie,
69364 Lyon Cedex 07, France}
\email{simon.allais@ens-lyon.fr}
\urladdr{http://perso.ens-lyon.fr/simon.allais/}
\date{August 16, 2018}
\subjclass[2010]{53D35, 58E05}
\keywords{Gromov non-squeezing, symplectic camel, generating functions}

\begin{abstract}
We provide a contact analogue of the symplectic camel theorem that holds in $\R^{2n}\times S^1$, and indeed generalize the symplectic camel. Our proof is based on the generating function techniques introduced by Viterbo, extended to the contact case by Bhupal and Sandon, and builds on Viterbo's proof of the symplectic camel.
\end{abstract}

\maketitle

\section{Introduction}

In 1985, Gromov made a tremendous progress in symplectic geometry with his theory of $J$-holomorphic curves \cite{Gro85}. Among the spectacular achievements of this theory, there was his famous non-squeezing theorem: if a round standard symplectic ball $B_r^{2n}$ of radius $r$ can be symplectically embedded into the standard symplectic cylinder $B^{2}_{R}\times\R^{2n-2}$ of radius $R$, then $r\leq R$ (here and elsewhere in the paper, all balls will be open). Other proofs were given later on by the means of other symplectic invariants: see Ekeland and Hofer \cite{EH1, EH2}, Floer, Hofer and Viterbo \cite{FHV}, Hofer and Zehnder \cite{HZ} and Viterbo \cite{Vit}. In 1991, Eliashberg and Gromov discovered a more subtle symplectic rigidity result: the camel theorem \cite[Lemma 3.4.B]{EG}. In order to remind its statement, let us first fix some notation. We denote by $q_1,p_1,\ldots ,q_n,p_n$ the coordinates on $\R^{2n}$, so that its standard symplectic form is given by $\omega=\ud\lambda$, where $\lambda=p_1\,\ud q_1+\cdots + p_n\,\ud q_n = p\ud q$. We consider the hyperplane $P:=\{q_n=0\}\subset\R^{2n}$, and the connected components $P_-:=\{q_n<0\}$ and $P_+:=\{q_n>0\}$ of its complement $\R^{2n}\setminus P$. We will denote by $B_r^{2n}=B_r^{2n}(x)$ the round Euclidean ball of radius $r$ in $\R^{2n}$ centered at some point $x\in\R^{2n}$, and by $P_R:=P\setminus B_r^{2n}(0)$ the hyperplane $P$ with a round hole of radius $R>0$ centered at the origin. The symplectic camel theorem claims that, in any dimension $2n>2$, if there exists a symplectic isotopy $\phi_t$ of $\R^{2n}$  and a ball $B_r^{2n}\subset \R^{2n}$ such that $\phi_0(B_r^{2n})\subset P_-$, $\phi_1(B_r^{2n})\subset P_+$, and $\phi_t(B_r^{2n})\subset \R^{2n}\setminus P_R$ for all $t\in[0,1]$, then $r\leq R$. The purpose of this paper is to prove a contact version of this theorem.

We consider the space $\R^{2n}\times  S^{1}$, where $S^1:=\R/\Z$. We will denote the coordinates on this space by $q_1,p_1,\ldots ,q_n,p_n,z$, and consider the 1-form $\lambda$ defined above also as a 1-form on $\R^{2n}\times S^1$ with a slight abuse of notation. We denote by $\alpha:=\lambda-\ud z$ the standard contact form on $\R^{2n}\times S^1$.
The set of contactomorphisms of $(\R^{2n}\times S^1,\alpha)$ will be denoted by $\Cont(\R^{2n}\times S^1)$ and
the subset of compactly supported contactomorphisms isotopic to the identity will be denoted by $\contoc(\R^{2n}\times S^1)$.
As usual, by a compactly supported contact isotopy of $(\R^{2n}\times S^1,\alpha)$ we will mean a smooth family of contactomorphisms $\phi_t\in\Cont(\R^{2n}\times S^1)$, $t\in[0,1]$, all supported in a same compact subset of $\R^{2n}\times S^1$. In 2006, Eliashberg, Kim and Polterovich \cite{EKP} proved an analogue and a counterpart of Gromov's non-squeezing theorem in this contact setting;  given any positive integer $k\in\N$ and two radii $r,R>0$ such that $\pi r^2\leq k \leq \pi R^2$,
there exists a compactly supported contactomorphism $\phi\in\Cont(\R^{2n}\times S^1)$ such that
$\phi \left(B^{2n}_{R}\times S^1\right)\subset B^{2n}_{r}\times S^1$ if and only if $r=R$; however, if $2n>2$ and  $R<1/\sqrt{\pi}$, then it is always possible to find such a $\phi$.
In 2011, Sandon \cite{San} extended generating function techniques of Viterbo \cite{Vit} and deduced
an alternative proof of the contact non-squeezing theorem.
In 2015, Chiu \cite{Chi} gave a stronger statement for the contact non-squeezing: given any radius $R\geq 1/\sqrt{\pi}$,
there is no compactly supported contactomorphism isotopic to identity $\phi\in\contoc(\R^{2n}\times S^1)$ such that
$\phi(\textup{Closure}(B^{2n}_{R}\times S^1))\subset B^{2n}_{R}\times S^1$.
The same year, an alternative proof of this strong non-squeezing theorem was given by Fraser \cite{Fra16}
(the technical assumption ``$\phi$ is isotopic to identity" is no longer needed in her proof).

Our main result is the following contact analogue of the symplectic camel theorem:

\begin{thm} \label{thm:ccamel}
In dimension $2n+1>3$, if $\pi r^2 < \ell < \pi R^2$ for some positive integer $\ell$
and $B_R^{2n}\times S^{1}\subset P_-\times S^{1}$, 
there is no compactly supported contact isotopy $\phi_t$ of $(\R^{2n}\times S^1,\alpha)$
such that $\phi_0=\id$, $\phi_1(B_R^{2n}\times S^{1})\subset P_+\times S^{1}$, and $\phi_t(B_R^{2n}\times S^{1})\subset (\R^{2n}\setminus P_r)\times S^{1}$ for all $t\in[0,1]$.
\end{thm}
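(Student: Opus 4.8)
The plan is to follow, in the contact category, the strategy of Viterbo's proof of the symplectic camel: argue by contradiction, attach to the isotopy a min--max invariant built from generating functions (now in the sense of Bhupal and Sandon), and show that its behaviour is incompatible with a radius-$R$ cylinder being dragged past a radius-$r$ hole in the wall. Suppose, then, that a compactly supported contact isotopy $\phi_t$ with the stated properties exists. One first passes to the $\Z$-cover $\R^{2n}\times\R$ of $\R^{2n}\times S^1$, on which $\phi_t$ lifts to a $\Z$-equivariant contact isotopy and where Sandon's machinery produces a continuous family $S_t$ of generating functions quadratic at infinity. The strict inequalities $\pi r^2<\ell<\pi R^2$ will provide a fixed amount of room in every estimate below.

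From the $S_t$ one extracts, following Viterbo, integer-valued spectral invariants of the moving cylinder $\phi_t(B_R^{2n}\times S^1)$ taken relative to the wall $P\times S^1$; the point is to arrange them so that (i) they compute, via Sandon's calculation, the value $\lceil\pi R^2\rceil\ge\ell+1$ when the cylinder lies entirely on one side of the wall, i.e.\ at $t=0$ and at $t=1$; (ii) they vary continuously, hence, being integer-valued, jump only at finitely many times; and (iii) at any time when $\phi_t(B_R^{2n}\times S^1)$ straddles the wall --- which it must do at some intermediate time, since it goes from $P_-\times S^1$ to $P_+\times S^1$ --- the hypothesis $\phi_t(B_R^{2n}\times S^1)\subset(\R^{2n}\setminus P_r)\times S^1$ confines the ``crossing'' part of the cylinder to a neighbourhood of the radius-$r$ hole, and such a neighbourhood admits a contact embedding into $B_\rho^{2n}\times S^1$ with $\pi\rho^2<\ell$ (because $\pi r^2<\ell$); this is the ``gate'' embedding, and it is the one place where the hypothesis $2n+1>3$ is used, exactly as $n>1$ is used in the symplectic camel. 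Property (iii) forces the relevant invariant to stay $\le\ell$ while the cylinder straddles the wall, whereas (i)--(ii) force it to equal something $\ge\ell+1$ as the cylinder crosses; these are incompatible, which is the contradiction proving Theorem~\ref{thm:ccamel}. (One may instead organise the argument so that $\phi_t$ produces directly a compactly supported contactomorphism of $\R^{2n}\times S^1$, isotopic to the identity, squeezing a cylinder of capacity $>\ell$ into one of capacity $<\ell$, contradicting the contact non-squeezing theorem of Chiu \cite{Chi}, or Fraser \cite{Fra16}.)

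The heart of the matter --- and the main obstacle --- is the construction of these relative invariants and the verification of (i)--(iii) in the contact setting. In Viterbo's symplectic proof this rests on a reduction of the generating function near the hyperplane-with-a-hole together with monotonicity of the Viterbo capacity; to transplant it to $(\R^{2n}\times S^1,\alpha)$ one must carry the $S^1$-factor (equivalently, the $\Z$-equivariance on $\R^{2n}\times\R$) through every step and work with Sandon's \emph{integer-valued} contact invariants rather than real-valued capacities. This integrality is not an artefact: it is why the hypothesis must involve an integer $\ell$ with $\pi r^2<\ell<\pi R^2$ instead of merely $r<R$, and why the statement genuinely fails in dimension $3$, just as the symplectic camel fails in dimension $2$. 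Finally, one must keep the constructions compatible with $\phi_0=\id$ (so that Chiu's non-squeezing applies, or else invoke Fraser's version, which does not require it), and carry out the localization near $P_r\times S^1$ using only the hypothesis on the images $\phi_t(B_R^{2n}\times S^1)$.
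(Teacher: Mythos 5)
Your proposal is a plan rather than a proof: the object that is supposed to carry the contradiction --- an ``integer-valued spectral invariant of the moving cylinder relative to the wall'' satisfying (i)--(iii) --- is never constructed, and it is precisely in its construction that all the work lies. Moreover, the mechanism you sketch does not match what actually works. The paper does not track a $t$-dependent invariant that is large when the cylinder is on one side of $P$ and small when it straddles $P$. Instead it \emph{suspends} the isotopy: from $\phi_t$ one builds (Lemma~\ref{lem:construction}) a $q_n$-periodic family $\psi^s_t$ interpolating between the translation flow $\tau_t$ and a flow avoiding the periodized holes, promotes it to a contact isotopy $\Psi^s$ of $T^*C\times T^*(\R^{n-1}\times C)\times S^1$ with an extra $T^*C$ factor in the time direction, and compares a single capacity $c(\mu\otimes\ud t\otimes\ud z,\mathcal U)$ of the image $\mathcal U$ of $T^*C\times B_R^{2n}\times S^1$: it is $\geq\lceil\pi R^2\rceil$ by isotopy invariance back to $s=0$ plus linear invariance (Lemma~\ref{lem:capineq}), and $\leq\gamma(\mathcal U_0)$ by the reduction inequality $c(\mu\otimes 1,U)\leq\gamma(U_w)$ (Lemma~\ref{lem:cred}), whose proof requires the delicate dichotomy that the min-max value $c^t=c(\mu,F^t_w-\widetilde K)$ is either never an integer or constant (Lemma~\ref{lem:technical}). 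Your step (iii) is the most serious gap: there is no ``gate embedding'' of a neighbourhood of the hole into a small contact ball $B^{2n}_\rho\times S^1$; what one actually uses is the \emph{symplectic reduction} at $q_n=0$, which drops the dimension by two and lands in $T^*C\times B_r^{2n-2}\times S^1$, together with the explicit displacement estimate $\gamma(T^*C\times B_r^{2n-2}\times S^1)\leq\lceil\pi r^2\rceil$ (Lemma~\ref{lemma:contgamma}) obtained from a rotation Hamiltonian. This displacement estimate --- not any embedding of a neighbourhood of the hole --- is where $n>1$ is used, since it fails for a $0$-dimensional reduced ball.

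Two further points. First, your fallback suggestion that one could instead derive a contactomorphism squeezing a large cylinder into a small one and invoke Chiu or Fraser is not viable: the camel theorem does not reduce to non-squeezing (already in the symplectic case the camel is strictly stronger and is not a corollary of Gromov's theorem), and nothing in the camel hypotheses produces such a squeezing map. Second, even granting your invariant, the logic of (i)--(iii) as stated is circular: a continuous integer-valued function on $[0,1]$ is constant, so the asserted clash between ``$\geq\ell+1$ at the endpoints'' and ``$\leq\ell$ at a straddling time'' can only be extracted once the invariant is defined concretely enough to verify both bounds --- which is exactly the content of Lemmas~\ref{lemma:contgamma}, \ref{lem:cred} and \ref{lem:capineq} that your outline leaves unaddressed.
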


Notice that the squeezing theorem of Eliashberg-Kim-Polterovich implies that Theorem~\ref{thm:ccamel} does not hold if one instead assumes that $\pi R^2<1$.

Theorem \ref{thm:ccamel}
implies the symplectic camel theorem. Indeed, suppose that there exists a symplectic isotopy $\psi_t$ of $\R^{2n}$  and a ball $B_R^{2n}\subset \R^{2n}$ such that $\psi_0(B_R^{2n})\subset P_-$, $\psi_1(B_R^{2n})\subset P_+$, and $\psi_t(B_R^{2n})\subset \R^{2n}\setminus P_r$ for all $t\in[0,1]$, and assume by contradiction that $r<R$. Without loss of generality, we can assume that $\psi_0=\id$ (see \cite[Prop.\ on page 14]{Sik90}) and that the isotopy $\psi_t$ is compactly supported. By conjugating $\psi_t$ with the dilatation $x\mapsto \nu x$, we obtain a new compactly supported  symplectic isotopy $\psi'_t$ with $\psi'_0=\id$ and  a ball $B_{\nu R}^{2n}\subset P_{-}$ such that $\psi_1'(B_{\nu R}^{2n})\subset P_+$, and $\psi_t'(B_{\nu R}^{2n})\subset \R^{2n}\setminus P_{\nu r}$ for all $t\in[0,1]$. If we choose $\nu>0$ large enough, we have $\pi (\nu r)^2 >\ell > \pi (\nu R)^2$ for some $\ell\in\Z$, and the contact lift of $\psi'_t$ to $\R^{2n}\times S^1$ contradicts our Theorem~\ref{thm:ccamel}.

Our proof of the contact camel theorem is based on Viterbo's proof \cite[Sect. 5]{Vit} of the symplectic version, which is given in terms of generating functions. Viterbo's proof is rather short and notoriously difficult to read. For this reason, in this paper we provide a self-contained complete proof of Theorem~\ref{thm:ccamel}, beside quoting a few lemmas from the recent work of Bustillo \cite{Bus}. The generalization of the generating function techniques to the contact setting is largely due to Bhupal \cite{Bhu} and Sandon \cite{San}. In particular, the techniques from \cite{San} are crucial for our work.

\subsection*{Organization of the paper} In Section~\ref{se:pre}, we provide the background on generating functions and the symplectic and contact invariants constructed by means of them.
In Section~\ref{se:prop}, we prove additional properties of symplectic and contact invariants that will be key to the proof of Theorem~\ref{thm:ccamel}.
In Section~\ref{se:thm}, we prove Theorem~\ref{thm:ccamel}.

\subsection*{Acknowledgments} I thank Jaime Bustillo who gave me helpful advice and a better understanding of reduction inequalities.
I am especially grateful to my advisor Marco Mazzucchelli.
He introduced me to generating function techniques and 
gave me a lot of advice and suggestions throughout the writing process.

\section{Preliminaries}\label{se:pre}

In this section, we remind to the reader some known results about generating functions that we will need.

\subsection{Generating functions} \label{se:gf}

Let $B$ be a closed connected manifold.
We will usually write $q\in B$ points of $B$, $(q,p)\in T^{*}B$ for the cotangent coordinates,
$(q,p,z)\in J^{1}B$ for the 1-jet coordinates and $\xi\in\R^{N}$ for vectors of some fiber space.
A generating function on $B$ is a smooth function
$F : B\times \R^{N}\rightarrow\R$ such that $0$ is a regular value of the fiber derivative
$\frac{\partial F}{\partial \xi}$. Then,

\begin{equation*}
\Sigma_{F} := \left\{ (q,\xi)\in B\times\R^{N}\;|\; \frac{\partial F}{\partial \xi}(q;\xi) =0 \right\},
\end{equation*}
is a smooth submanifold called the level set of $F$.

Generating functions give a way of describing Lagrangians and Legendrians of $T^{*}B$ and $J^{1}B$ respectively.
Indeed,
\begin{equation*}
\iota_{F} : \Sigma_{F} \rightarrow T^{*}B,\qquad \iota_{F}(q;\xi) = (q,\partial_{q} F(q;\xi))
\end{equation*}
and
\begin{equation*}
\widehat{\iota}_{F} : \Sigma_{F} \rightarrow J^{1}B,\qquad \widehat{\iota}_{F}(q;\xi) = (q,\partial_{q} F(q;\xi),F(q;\xi))
\end{equation*}
are respectively Lagrangian and Legendrian immersions.
We say that $F$ generates the immersed Lagrangian $L:=\iota_{F}(\Sigma_{F})$
and the immersed Legendrian $L_1:=\widehat{\iota}_{F}(\Sigma_{F})$.
In this paper, we will only consider embedded Lagrangians and Legendrians.

We must restrict ourselves to a special category of generating functions:
\begin{definition}
A function $F:B\times\R^{N}\rightarrow \R$ is quadratic at infinity if
there exists a quadratic form $Q:\R^{N}\rightarrow\R$ such that the differential $\ud F - \ud Q$ is bounded.
$Q$ is unique and called the quadratic form associated to $F$.
\end{definition}
In the following, by generating function we will always implicitly mean generating function quadratic at infinity.
In this setting, there is the following fundamental result:

\begin{thm}[{\cite[Sect. 1.2]{Sik87},\cite[Lemma 1.6]{Vit}}]\label{thm:gfqi}
If $B$ is closed, then any Lagrangian submanifold of $T^{*}B$ Hamiltonian isotopic to the 0-section
has a generating function,
which is unique up to fiber-preserving diffeomorphism and stabilization.
\end{thm}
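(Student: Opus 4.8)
The plan is to establish existence and uniqueness separately, reducing both to the $0$-section $0_{B}\subset T^{*}B$, which is generated by the zero function on $B\times\R^{0}$ (vacuously quadratic at infinity, with associated quadratic form $Q=0$).

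\emph{Existence.} Let $\phi_{t}$, $t\in[0,1]$, be a Hamiltonian isotopy of $T^{*}B$ with $\phi_{0}=\id$ and $\phi_{1}(0_{B})=L$. First I would subdivide $[0,1]$ into $0=t_{0}<t_{1}<\cdots<t_{m}=1$ finely enough that each $\psi_{i}:=\phi_{t_{i}}\circ\phi_{t_{i-1}}^{-1}$ is $C^{1}$-close to the identity. For such a $\psi_{i}$ the twisted graph is a section of $T^{*}(B\times B)$, so $\psi_{i}$ is generated by a function $S_{i}\colon B\times B\to\R$ ($C^{2}$-close to a constant, and unique up to an additive constant). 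Then I would invoke the composition formula: if an (immersed) Lagrangian is generated by $F(q;\xi)$ and a Hamiltonian diffeomorphism by $S(q';q)$, then its image is generated by $(q;\xi,q')\mapsto F(q';\xi)+S(q';q)$, up to the standard sign and variable conventions. Iterating this $m$ times starting from $0_{B}$ produces the ``broken geodesic'' function
\begin{equation*}
G(q;q^{1},\dots,q^{m})=S_{1}(q^{1};q^{2})+S_{2}(q^{2};q^{3})+\cdots+S_{m-1}(q^{m-1};q^{m})+S_{m}(q^{m};q),
\end{equation*}
with fibre variables $q^{1},\dots,q^{m}$ ranging over $B$. A standard reduction — embed $B$ in some $\R^{M}$ with a tubular neighbourhood, confine each $q^{i}$ to it, and complete quadratically outside — then replaces the compact fibre $B^{m}$ by a Euclidean space without changing the generated Lagrangian and arranges the quadratic-at-infinity condition, yielding the desired generating function for $L$.

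\emph{Uniqueness.} Suppose $F_{0},F_{1}$ both generate $L=\phi_{1}(0_{B})$. Composing each $F_{j}$ with a generating function for $\phi_{1}^{-1}$ built as in the existence part, and using that composition with a fixed generating function respects the equivalence relation (fibre-preserving diffeomorphism together with stabilization), while composing successively with generating functions of $\phi_{1}^{-1}$ and of $\phi_{1}$ returns a function equivalent to the original, one reduces to the case $L=0_{B}$. There it suffices to show that every generating function quadratic at infinity of $0_{B}$ becomes, after a stabilization and a fibre-preserving diffeomorphism, a nondegenerate quadratic form, since any two nondegenerate quadratic forms are stabilization-equivalent (stabilize each by the other). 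So let $F\colon B\times\R^{N}\to\R$ generate $0_{B}$. The projection $\Sigma_{F}\to B$ is then a diffeomorphism; since $\partial_{q}F\equiv0$ on $\Sigma_{F}$ (this is exactly what $\iota_{F}(\Sigma_{F})=0_{B}$ means) while $\partial_{\xi}F\equiv0$ there by definition, $F$ is locally constant, hence constant, along $\Sigma_{F}$; normalizing this constant to $0$ and changing fibre coordinates, one arranges $\Sigma_{F}=B\times\{0\}$, a nondegenerate critical submanifold of $F$. A parametrized Morse-theoretic analysis — comparing the behaviour of $F$ near $\Sigma_{F}$ with its behaviour at infinity, and using a stabilization to trivialize the fibrewise Hessian bundle along $\Sigma_{F}$ — then yields the claim, and unwinding the reduction gives $F_{0}\sim F_{1}$.

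\emph{Main obstacle.} The existence half is essentially bookkeeping once the composition formula and the generating function of a $C^{1}$-small diffeomorphism are in hand. The delicate half is uniqueness: one must check that the equivalence relation is genuinely stable under composition and under the reduction to $0_{B}$, and, above all, carry out the normalization of a generating function of $0_{B}$ globally in $q$ while respecting the quadratic-at-infinity condition and absorbing the topology of the fibrewise Hessian bundle into a stabilization. This is the point that is only sketched in the references cited above and whose complete verification requires real care.
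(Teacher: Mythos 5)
The paper does not actually prove this statement: it is a quoted classical theorem, with existence attributed to Sikorav \cite{Sik87} and uniqueness to Viterbo \cite{Vit} (the paper explicitly points to Th\'eret \cite{The} for the details of the uniqueness proof). Your outline reproduces exactly the strategy of those references --- subdivision of the Hamiltonian isotopy into $C^{1}$-small steps and a composition/broken-geodesic formula for existence, then reduction to the zero section and normalization to a nondegenerate quadratic form for uniqueness --- and the skeleton, including the reduction of the compact fibre $B^{m}$ to a Euclidean fibre via an embedding $B\hookrightarrow\R^{M}$ and the observation that any two nondegenerate quadratic forms are equivalent under stabilization, is correct. However, the step you compress into ``a parametrized Morse-theoretic analysis \dots then yields the claim'' is not a routine verification: showing that a generating function quadratic at infinity of $0_{B}$ becomes a nondegenerate quadratic form after one stabilization and a fibre-preserving diffeomorphism is the genuine content of Viterbo's uniqueness lemma, and it is precisely the point where Viterbo's original argument needed the careful global normal-form construction later supplied by Th\'eret (trivializing the fibrewise Hessian bundle along $\Sigma_{F}$, interpolating between the behaviour near $\Sigma_{F}$ and at infinity while preserving the quadratic-at-infinity condition). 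You identify this honestly as the main obstacle, so your proposal is a faithful roadmap to the cited proofs rather than a self-contained proof; as such it is consistent with how the paper itself treats the statement.
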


The existence in this theorem is due to Sikorav,
whereas the uniqueness is due to Viterbo (the reader might also see \cite{The} for the
details of Viterbo's proof). The contact analogous is the following
(with an additional statement we will need later on):

\begin{thm}[{\cite[Theorem 3]{Cha95},\cite[Theorem 3.2]{Che96},\cite[Theorems 25, 26]{ThePHD}}]\label{thm:contgfqi}
If $B$ is closed, then any Legendrian submanifold of $J^{1}B$ contact isotopic to the 0-section has a generating function,
which is unique up to fiber-preserving diffeomorphism and stabilization.
Moreover, if $L_{1}\subset J^{1}B$ has a generating function
and $\phi_{t}$ is a contact isotopy of $J^{1}B$, then there exists a continuous family of generating functions
$F^{t}:B\times\R^{N}\rightarrow\R$ such that each $F^{t}$ generates the corresponding $\phi_{t}(L_1)$.
\end{thm}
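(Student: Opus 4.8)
The plan is to assemble the statement from the work of Chaperon, Chekanov and Th\'eret: existence and the continuity statement via Chaperon's broken-geodesics (composition) method, and uniqueness via Viterbo's argument adapted to the contact setting. First, for existence, write $L=\phi_1(0_B)$ for a contact isotopy $(\phi_t)_{t\in[0,1]}$ of $J^1B$ with $\phi_0=\id$, and choose a subdivision $0=t_0<\cdots<t_k=1$ fine enough that each $\psi_j:=\phi_{t_j}\circ\phi_{t_{j-1}}^{-1}$ is $C^1$-close to $\id$. The base case of the induction is that the image of $0_B$ under a contactomorphism $C^1$-close to $\id$ is a Legendrian whose front projection $J^1B\to B\times\R$ is still an embedding onto a graph; such a Legendrian equals $j^1S$ for some $S\in C^\infty(B)$, hence is generated by $S$ with empty fiber ($N=0$), which is trivially quadratic at infinity. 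It then suffices to propagate a generating function quadratic at infinity through the composition $\phi_1=\psi_k\circ\cdots\circ\psi_1$.

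The composition step runs as follows. A contactomorphism $\psi$ of $J^1B$ that is $C^1$-close to $\id$ admits a generating function $S_\psi$ in Bhupal's sense for its $1$-graph; after a chart identifying a neighborhood of the diagonal in $B\times B$ with $T^*B$ and a stabilization by a nondegenerate quadratic form in the transverse directions, $S_\psi$ has linear fiber. Given a generating function quadratic at infinity $F\colon B\times\R^N\to\R$ for a Legendrian $\Lambda\subset J^1B$, one forms a contact composition by treating the auxiliary base point $q'\in B$ as an extra fiber variable,
\[
(S_\psi\diamond F)(q;\xi,q')=F(q';\xi)+S_\psi(q,q'),
\]
up to the bookkeeping of the $z$-coordinate and of the conformal factor of $\psi$, and one checks that $0$ is a regular value of the new fiber derivative, that the associated critical-value data recovers $\psi(\Lambda)$ as an embedded Legendrian of $J^1B$, and that $S_\psi\diamond F$ is again quadratic at infinity with associated quadratic form $Q_F$ plus that of $S_\psi$. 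Iterating this $k$ times, starting from the $N=0$ generating function of $\psi_1(0_B)$, yields the desired generating function for $L$. The same mechanism gives the ``moreover'' part: applied to a given generating function of $L_1$ and to the generating functions $S_{\phi_t\circ\phi_0^{-1}}$, which may be taken to depend continuously on $t$ once a sufficiently fine subdivision is fixed and interpolated across, it produces the required continuous family $F^t$ generating $\phi_t(L_1)$, with a routine overlap argument handling the subdivision times, where the fiber dimension jumps by a stabilization.

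For uniqueness, let $F_0,F_1$ both generate $L$; after a stabilization assume they have equal fiber dimension. One first treats $L=0_B$: if $F$ generates $j^1 0$, then on the level set $\Sigma_F$, which maps diffeomorphically onto $B$, one has $F\equiv 0$ and, for each $q$, a unique fiber-critical point with nondegenerate vertical Hessian; a parametrized Morse lemma, together with a deformation argument controlling the behavior at infinity and removing the residual $q$-dependence, shows that $F$ is fiber-preserving-diffeomorphic, after a further stabilization, to $(q,\xi)\mapsto Q(\xi)$ for a nondegenerate quadratic form $Q$. For general $L=\phi_1(0_B)$, one transports $F_0$ and $F_1$ backwards along $\phi_t^{-1}$ by the composition construction above to obtain continuous families $F_0^t$ and $F_1^t$ whose values at $t=0$ are generating functions of $0_B$, hence equivalent by the base case; it then remains to show that an equivalence at $t=0$ propagates continuously along the two families to an equivalence at $t=1$, giving $F_0\sim F_1$ up to fiber-preserving diffeomorphism and stabilization.

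The main obstacle is this last uniqueness part. The parametrized Morse normalization must be carried out compatibly with the quadratic-at-infinity constraint, and, more seriously, propagation of equivalences along a composition family is not automatic: the conformal factor of a contactomorphism prevents the naive transport from being fiber-preserving, so it must be corrected. This correction, together with the careful verification that the contact composition above generates the correct Legendrian and remains quadratic at infinity, is exactly the technical core of the papers of Chekanov and Th\'eret, which we quote rather than reproduce.
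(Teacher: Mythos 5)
The paper does not prove this statement at all: it is quoted as a black box from Chaperon, Chekanov and Th\'eret, which are exactly the sources your outline reconstructs (Chaperon's decomposition of the isotopy into $C^1$-small pieces and the composition formula for existence and for the continuous family $F^t$; Th\'eret's contact adaptation of Viterbo's uniqueness argument via normalization over the zero section and propagation along the isotopy). Your sketch is a faithful summary of those proofs and correctly locates the technical core (the conformal factor in the contact composition, the quadratic-at-infinity bookkeeping, the propagation of equivalences) in the cited references, so it matches the paper's treatment.
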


\subsection{Min-max critical values}\label{se:minmax}
In the following, $F:B\times \R^{N}\rightarrow \R$ is a smooth function quadratic at infinity of
associated quadratic form $Q$ (generating functions are a special case).
Let $q$ be Morse index of $Q$ (that is the dimension of its maximal negative subspace).
We will denote by $E$ the trivial vector bundle
$B\times\R^{N}$ and, given $\lambda \in\R$, $E^{\lambda}$ the sublevel set $\{ F < \lambda \}\subset E$.

In this paper, $H^{*}$ is the singular cohomology with coefficients in $\R$ and
$1\in H^{*}(B)$ will always denote the standard generator of $H^{0}(B)$ ($B$ is connected).
Let $C>0$ be large enough so that any critical point of $F$ is contained in $\{|F|< C\}$.
A classical Morse theory argument implies that $\left(E^{C},E^{-C}\right)$ is homotopy equivalent
to $B\times (\{Q < C\} ,\{ Q < -C\})$ and the induced isomorphism given by K\"unneth formula:
\begin{equation}\label{eq:kun}
T : H^{p}(B) \xrightarrow{\simeq} H^{p+q}\left(E^{C},E^{-C}\right)
\end{equation}
does not depend on the choice of $C$. So we define
$H^{*}\left(E^{\infty},E^{-\infty}\right) := H^{*}\left(E^{C},E^{-C}\right)$.
We also define
$H^{*}\left(E^{\lambda},E^{-\infty}\right) := H^{*}\left(E^{\lambda},E^{-C}\right)$.

Given any non-zero $\alpha\in H^{*}(B)$, we shall now define its min-max critical value by
\begin{equation*}
c(\alpha, F) := \inf_{\lambda\in\R}\left\{T\alpha \not\in \ker\left(
H^{*}\left(E^{\infty},E^{-\infty}\right) \rightarrow H^{*}\left(E^{\lambda},E^{-\infty}\right)
\right)\right\}.
\end{equation*}
One can show that this quantity is a critical value of $F$ by classical Morse theory.

\begin{prop}[Viterbo \cite{Vit}]\label{prop:minmax}
Let $F_{1}:B\times\R^{N_1}\to\R$ and $F_{2}:B\times\R^{N_2}\to\R$ be generating functions quadratic at infinity normalized so that $F_1(q_0,\xi_0')=F_2(q_0,\xi_0'')=0$ at some pair of critical points $(q_0,\xi_0')\in\crit(F_1)$ and $(q_0,\xi_0'')\in\crit(F_2)$ that project to the same $q_0$. Then:
\begin{enumerate}
\item\label{item:inv} if $F_1$ and $F_2$ generate the same Lagrangian, then $c(\alpha,F_1)=c(\alpha,F_2)$ for all non-zero 
$\alpha\in H^{*}(B)$,
\item\label{item:additivity} if we see the sum $F_{1}+F_{2}$ as a generating function of the form
\begin{equation*}
F_{1}+F_{2} : B\times\R^{N_1+N_2}\to\R,\qquad (F_{1}+F_{2})(q;\xi_{1},\xi_{2}) = F_{1}(q;\xi_{1}) + F_{2}(q;\xi_{2}),
\end{equation*}
then
\begin{equation*}
c(\alpha\smile\beta, F_{1}+F_{2})\geq c(\alpha,F_{1}) + c(\beta,F_{2}),
\end{equation*}
for all $\alpha,\beta\in H^{*}(B)$ whose cup product  $\alpha\smile\beta$ is non-zero.

\item\label{item:duality} if $\mu\in H^{\dim(B)}(B)$ denotes the orientation class of $B$, then
\begin{equation*}
c(\mu,F_1) = -c(1,-F_1).
\end{equation*}
\end{enumerate}
\end{prop}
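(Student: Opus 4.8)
The plan is to establish the three items separately. For $i=1,2$ write $E_i=B\times\R^{N_i}$ and $E_i^{\lambda}=\{F_i<\lambda\}$, let $q_i$ be the Morse index of the quadratic form $Q_i$ associated to $F_i$, and let $T_i\colon H^{p}(B)\xrightarrow{\simeq}H^{p+q_i}(E_i^{\infty},E_i^{-\infty})$ be the isomorphism \eqref{eq:kun}. Two preliminary remarks. Since a generating function records its Lagrangian only through $d_qF$ along $\Sigma_F$, replacing $F$ by $F+k$ turns $c(\alpha,F)$ into $c(\alpha,F)+k$; the normalization at a common critical point is exactly what fixes this additive constant, and it is needed only for item~\ref{item:inv} (items~\ref{item:additivity} and \ref{item:duality} are unchanged under such a shift). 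Throughout we work with the compact Morse-theoretic models of the pairs $(E^{\lambda},E^{-\infty})$, and say ``$T\alpha$ survives at level $\lambda$'' when $T\alpha$ has non-zero image in $H^{*}(E^{\lambda},E^{-\infty})$; by the long exact sequence of the triple $E^{-\infty}\subset E^{\lambda}\subset E^{\infty}$ this fails precisely when $T\alpha$ lies in the image of $H^{*}(E^{\infty},E^{\lambda})\to H^{*}(E^{\infty},E^{-\infty})$, and $c(\alpha,F)=\inf\{\lambda:T\alpha\text{ survives at }\lambda\}$ is the threshold between these regimes.

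\emph{Invariance (item~\ref{item:inv}).} By Theorem~\ref{thm:gfqi}, two normalized generating functions of the same Lagrangian are related by a finite chain of fiber-preserving diffeomorphisms and stabilizations $F\mapsto F\oplus Q'$ by nondegenerate quadratic forms, the leftover additive constant being killed by the normalization. A fiber-preserving diffeomorphism $\Phi(q,\xi)=(q,\phi_q(\xi))$ satisfies $F_2=F_1\circ\Phi$, hence induces for every $\lambda$ a homeomorphism of pairs $(E_1^{\lambda},E_1^{-\infty})\to(E_2^{\lambda},E_2^{-\infty})$ commuting with all inclusion-induced maps and, as it covers $\id_B$, carrying $T_1\alpha$ to $T_2\alpha$; so the two families of kernels, hence the two min-max values, coincide. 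Under a stabilization, $\{F\oplus Q'<\lambda\}$ deformation retracts fiberwise over $\{F<\lambda\}$ onto the negative disk bundle of $Q'$, and in the resulting K\"unneth factorization $T\alpha$ merely acquires the fixed Thom class of $Q'$, whose only critical value is $0$; so the min-max level does not change. Hence $c(\alpha,\cdot)$ depends only on the Lagrangian.

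\emph{Subadditivity (item~\ref{item:additivity}).} The key point is that the sum on $B$ is a restriction along the diagonal of a product on $B\times B$: with $G:=F_1\oplus F_2$ on $B\times B$, $(q_1,q_2;\xi_1,\xi_2)\mapsto F_1(q_1;\xi_1)+F_2(q_2;\xi_2)$, one has $F_1+F_2=G\circ(\Delta\times\id)$ for $\Delta\colon B\hookrightarrow B\times B$, and $\alpha\smile\beta=\Delta^{*}(\alpha\times\beta)$. So it suffices to prove two facts. \textbf{(a)} \emph{Restriction monotonicity}: for an embedding $j\colon B'\hookrightarrow B''$, a generating function $G$ on $B''$, and $\gamma$ with $j^{*}\gamma\ne 0$, one has $c(j^{*}\gamma,G\circ(j\times\id))\ge c(\gamma,G)$; this is immediate, since $j\times\id$ maps the sublevel sets of $G\circ(j\times\id)$ into those of $G$ and, at every level, carries $T_{B''}\gamma$ to $T_{B'}(j^{*}\gamma)$, so each level at which the latter survives is one at which the former survives. \textbf{(b)} \emph{Product formula}: $c(\alpha\times\beta,F_1\oplus F_2)\ge c(\alpha,F_1)+c(\beta,F_2)$. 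To see this, pick $a_1<c(\alpha,F_1)$, $a_2<c(\beta,F_2)$, set $a=a_1+a_2$, and use that $T_1\alpha$ lifts to $H^{*}(E_1^{\infty},E_1^{a_1})$ and $T_2\beta$ lifts to $H^{*}(E_2^{\infty},E_2^{a_2})$; choosing all cut-offs large and compatibly, $T(\alpha\times\beta)$ --- which is the external product of $T_1\alpha$ and $T_2\beta$, since the Thom class of $Q_1\oplus Q_2$ is the product of the Thom classes --- is represented by a cup product of a class relative to $\{F_1<a_1\}$ and one relative to $\{F_2<a_2\}$, hence lives in $H^{*}(E^{\infty},\{F_1<a_1\}\cup\{F_2<a_2\})$. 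Since $\{F_1+F_2<a\}\subset\{F_1<a_1\}\cup\{F_2<a_2\}$, this forces $T(\alpha\times\beta)$ into the image of $H^{*}(E^{\infty},\{F_1+F_2<a\})\to H^{*}(E^{\infty},E^{-\infty})$, i.e.\ $T(\alpha\times\beta)$ dies at level $a$; letting $a_1\nearrow c(\alpha,F_1)$ and $a_2\nearrow c(\beta,F_2)$ gives (b). Combining (a) (applied to $j=\Delta$) with (b) yields the claimed inequality. I expect \textbf{(b)} to be the main obstacle: sublevel sets of a sum are not products of sublevel sets, so one must pass to the compact models, nest the cut-off constants ($C$, $C_1$, $C_2$) correctly --- so that $E^{-\infty}\subset\{F_1<a_1\}\cup\{F_2<a_2\}$ and the projections restrict to the subsets where they are needed --- and verify that the external product of the two lifts genuinely represents $T(\alpha\times\beta)$ in the appropriate relative group, not merely after forgetting relativity.

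\emph{Duality (item~\ref{item:duality}).} Write $\bar F:=-F_1$, which is quadratic at infinity with form $-Q_1$ of Morse index $N_1-q_1$; put $\bar E^{s}:=\{\bar F<s\}=\{F_1>-s\}$, $n:=\dim B+N_1$, and let $\bar T$ be the isomorphism \eqref{eq:kun} for $\bar F$. For generic $\lambda$, the pairs $(E_1^{\infty},E_1^{\lambda})$ for $F_1$ and $(\bar E^{-\lambda},\bar E^{-\infty})$ for $\bar F$ both detect the critical points of $F_1$ of value in $(\lambda,C)$, and Poincar\'e--Lefschetz duality on the compact cobordism $\{\lambda\le F_1\le C\}$ yields, compatibly with the level-enlarging maps, a perfect pairing $H^{k}(E_1^{\infty},E_1^{\lambda})\otimes H^{n-k}(\bar E^{-\lambda},\bar E^{-\infty})\to\R$; likewise $H^{k}(E_1^{\infty},E_1^{-\infty})$ and $H^{n-k}(\bar E^{\infty},\bar E^{-\infty})$ are dually paired, and under these $T_1\mu$ (degree $\dim B+q_1$) corresponds to $\bar T1$ (degree $N_1-q_1$), each spanning a line. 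Hence the map $H^{*}(E_1^{\infty},E_1^{\lambda})\to H^{*}(E_1^{\infty},E_1^{-\infty})$ has $T_1\mu$ in its image if and only if its dual, the restriction $H^{*}(\bar E^{\infty},\bar E^{-\infty})\to H^{*}(\bar E^{-\lambda},\bar E^{-\infty})$, is injective on $\bar T1$; that is, $T_1\mu$ dies at level $\lambda$ for $F_1$ exactly when $\bar T1$ survives at level $-\lambda$ for $\bar F$. Unwinding the thresholds, $c(\mu,F_1)=\inf\{\lambda:T_1\mu\text{ survives at }\lambda\}=\inf\{\lambda:\bar T1\text{ dies at level }-\lambda\}=-c(1,-F_1)$. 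Setting this duality up carefully for the non-compact fiber bundles --- again through their compact models --- needs some bookkeeping, but is otherwise routine.
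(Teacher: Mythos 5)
Your proposal is correct and follows essentially the route the paper itself takes (by deferring to Viterbo): item~(\ref{item:inv}) from the uniqueness-up-to-fiber-diffeomorphism-and-stabilization statement together with the normalization, item~(\ref{item:additivity}) by Viterbo's argument of lifting $T_1\alpha$ and $T_2\beta$ to classes relative to sublevel sets and cupping them over the diagonal, and item~(\ref{item:duality}) by Poincar\'e--Lefschetz duality between the sublevel pairs of $F_1$ and $-F_1$. The bookkeeping points you flag (compatible cut-offs for the product formula, compact models for the duality) are exactly the ones that need care, and your treatment of them is sound.
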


\begin{proof}
Point (\ref{item:inv}) follows from the uniqueness statement in theorem \ref{thm:gfqi}
See \cite[Prop. 3.3]{Vit} for point (\ref{item:additivity}) and \cite[cor. 2.8]{Vit} for point (\ref{item:duality}).
\end{proof}

When the base space is a product $B=V\times W$, one has the following 

\begin{prop}[{\cite[Prop. 5.1]{Vit}, \cite[Prop. 2.1]{Bus}}]\label{prop:redgf}
Let $F:V\times W\times\R^{N}\rightarrow\R$ be a generating function and let $w\in W$.
Consider the restriction
$F_{w}:V\times\R^{N}\rightarrow\R$, $F_{w}(v;\xi)=F(v,w;\xi)$
(quadratic at infinity on the base space $V$), then
\begin{enumerate}
\item\label{item:ineqred} if $\mu_{2}$ is the orientation class of $W$, then for all non-zero $\alpha\in H^{*}(V)$,
\begin{equation*}
c(\alpha\otimes 1,F)\leq c(\alpha , F_{w})\leq c(\alpha\otimes\mu_{2},F),
\end{equation*}

\item\label{item:eqred} if $F$ does not depend on the $w$-coordinate,
for all non-zero $\alpha\in H^{*}(V)$ and non-zero $\beta\in H^{*}(W)$,
\begin{equation*}
c(\alpha\otimes\beta,F) = c(\alpha , F_{w}).
\end{equation*}
\end{enumerate}
\end{prop}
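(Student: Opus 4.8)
The plan is to deduce both inequalities in item~(\ref{item:ineqred}) from natural maps in cohomology attached to the inclusion of the slice $V\times\{w\}\times\R^{N}$ into $V\times W\times\R^{N}$, and then to read off item~(\ref{item:eqred}) from the K\"unneth formula. Write $i_{w}:V\times\R^{N}\to V\times W\times\R^{N}$, $i_{w}(v;\xi)=(v,w;\xi)$, so that $F\circ i_{w}=F_{w}$; fixing one constant $C$ large enough for both $F$ and $F_{w}$, the map $i_{w}$ restricts to morphisms of pairs $(E_{F_{w}}^{\lambda},E_{F_{w}}^{-\infty})\to(E_{F}^{\lambda},E_{F}^{-\infty})$ for all $\lambda$, compatibly with the maps induced by the inclusions $E^{\lambda}\hookrightarrow E^{\lambda'}$. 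First I would check that the induced restriction maps intertwine the isomorphisms~\eqref{eq:kun}, that is $i_{w}^{*}\circ T_{F}=T_{F_{w}}\circ i_{w}^{*}$, where on the right $i_{w}^{*}:H^{*}(V\times W)\to H^{*}(V)$ is restriction to the slice; in particular $i_{w}^{*}T_{F}(\alpha\otimes 1)=T_{F_{w}}(\alpha)$. Then, for $\lambda>c(\alpha,F_{w})$ the image of $T_{F_{w}}(\alpha)$ in $H^{*}(E_{F_{w}}^{\lambda},E_{F_{w}}^{-\infty})$ is nonzero, hence so is the image of $T_{F}(\alpha\otimes 1)$ in $H^{*}(E_{F}^{\lambda},E_{F}^{-\infty})$, whose image under $i_{w}^{*}$ is the former; letting $\lambda\downarrow c(\alpha,F_{w})$ gives $c(\alpha\otimes 1,F)\le c(\alpha,F_{w})$.

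For the right-hand inequality I would run the same argument with the Gysin (wrong-way) map $(i_{w})_{!}:H^{*}(E_{F_{w}}^{\lambda},E_{F_{w}}^{-\infty})\to H^{*+\dim W}(E_{F}^{\lambda},E_{F}^{-\infty})$ associated to $i_{w}$, viewed as a closed embedding of codimension $\dim W$ with normal bundle oriented by the orientation of $W$. This map is again natural with respect to $E^{\lambda}\hookrightarrow E^{\lambda'}$ and intertwines the isomorphisms~\eqref{eq:kun} with the Gysin map $H^{*}(V)\to H^{*+\dim W}(V\times W)$, $\alpha\mapsto\alpha\otimes\mu_{2}$. Hence, for $\lambda<c(\alpha,F_{w})$ the image of $T_{F_{w}}(\alpha)$ in $H^{*}(E_{F_{w}}^{\lambda},E_{F_{w}}^{-\infty})$ vanishes, so the image of $T_{F}(\alpha\otimes\mu_{2})$ in $H^{*}(E_{F}^{\lambda},E_{F}^{-\infty})$ vanishes as well, giving $\lambda\le c(\alpha\otimes\mu_{2},F)$; letting $\lambda\uparrow c(\alpha,F_{w})$ finishes item~(\ref{item:ineqred}).

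Item~(\ref{item:eqred}) is the degenerate case: if $F$ does not depend on $w$ then $F=F_{w}\circ\mathrm{pr}$ for the projection $\mathrm{pr}:V\times W\times\R^{N}\to V\times\R^{N}$, so $E_{F}^{\lambda}=E_{F_{w}}^{\lambda}\times W$ compatibly in $\lambda$. The K\"unneth formula over $\R$ then identifies $H^{*}(E_{F}^{\lambda},E_{F}^{-\infty})$ with $H^{*}(E_{F_{w}}^{\lambda},E_{F_{w}}^{-\infty})\otimes H^{*}(W)$ and, at $\lambda=\infty$, carries $T_{F}(\alpha\otimes\beta)$ to $T_{F_{w}}(\alpha)\otimes\beta$; since $\beta\ne 0$ and we work over a field, the image of $T_{F}(\alpha\otimes\beta)$ in $H^{*}(E_{F}^{\lambda},E_{F}^{-\infty})$ vanishes if and only if the image of $T_{F_{w}}(\alpha)$ in $H^{*}(E_{F_{w}}^{\lambda},E_{F_{w}}^{-\infty})$ does, whence $c(\alpha\otimes\beta,F)=c(\alpha,F_{w})$.

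The step I expect to be the main obstacle is the naturality of the Morse-theoretic isomorphism~\eqref{eq:kun} with respect to $i_{w}^{*}$ and $(i_{w})_{!}$: the homotopy equivalence $(E^{C},E^{-C})\simeq B\times(\{Q<C\},\{Q<-C\})$ underlying~\eqref{eq:kun} is produced by a pseudo-gradient flow of $F$, which does not preserve the slice, so the naturality is not formal. I would get around this by describing $T$ intrinsically as $T(\cdot)=\mathrm{pr}^{*}(\cdot)\smile\tau$, where $\tau$ is pulled back to $H^{*}(E^{\infty},E^{-\infty})$ from the Thom class of the (trivial) maximal negative subbundle of $Q$ in $B\times\R^{N}$ — a form manifestly natural under pulling back the base — and dualizing it to treat $(i_{w})_{!}$. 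The remaining points (a common choice of $C$, the up-closedness of the set of sublevels detecting $T\alpha$, which justifies the limits above, and the signs coming from the orientation of $W$) are routine and I would only mention them briefly.
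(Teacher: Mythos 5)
The paper itself does not prove this proposition -- it is quoted from Viterbo and Bustillo -- so the comparison is with the argument in those references. Your treatment of the left-hand inequality and of item~(\ref{item:eqred}) is essentially the standard one: restriction to the slice intertwines the maps to sublevel sets, and the K\"unneth splitting over a field makes the survival levels of $T_{F}(\alpha\otimes\beta)$ and $T_{F_{w}}(\alpha)$ coincide when $F$ is independent of $w$. For the right-hand inequality you depart from the cited proofs: Viterbo obtains $c(\alpha,F_{w})\leq c(\alpha\otimes\mu_{2},F)$ by applying the left-hand inequality to $-F$ (noting $(-F)_{w}=-F_{w}$) and invoking the duality of Proposition~\ref{prop:minmax}~(\ref{item:duality}) in its general form $c(\alpha,F)=-c(\bar{\alpha},-F)$ with $\alpha\smile\bar{\alpha}=\mu$, whereas you run the restriction argument in the wrong-way direction via the Gysin map of the codimension-$\dim W$ slice. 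Both routes are sound; the duality route is cheaper because it reuses the already-established naturality of $i_{w}^{*}$ and a black-boxed duality statement, while yours is self-contained but shifts the burden onto the compatibility $i_{w!}T_{F_{w}}(\alpha)=\pm T_{F}(\alpha\otimes\mu_{2})$, which (unlike the degree-$q$ statement $i_{w}^{*}T_{F}(1)=T_{F_{w}}(1)$, forced up to scalar because $H^{q}(E_{F_{w}}^{\infty},E_{F_{w}}^{-\infty})$ is one-dimensional) genuinely requires the projection formula and the naturality of the Thom class of the negative bundle. You correctly identify the one non-formal point in either approach -- that the homotopy equivalence underlying \eqref{eq:kun} is produced by a gradient deformation that does not preserve the slice -- and your fix, writing $T(\cdot)=\mathrm{pr}^{*}(\cdot)\smile T(1)$ and reducing naturality to the single class $T(1)$, is the standard way around it. I see no gap, only sketched technical steps that are indeed routine.
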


\subsection{Generating Hamiltonian and contactomorphism}\label{se:gfhc}

Let $\hamc\left(T^{*}M\right)$ be the set of time-1-flows of
time dependent Hamiltonian vector field. Given $\psi\in\hamc\left(T^{*}M\right)$, its graph
$\text{gr}_{\psi} = \text{id}\times\psi : T^{*}M\hookrightarrow T^{*}M\times T^{*}M$ is a Lagrangian embedding
in $\overline{T^{*}M}\times T^{*}M$.
In order to see $\text{gr}_{\psi}(T^{*}M)$ as the 0-section of some cotangent bundle,
let us restrict ourselves to the case $M=\R^{n}\times\T^{k}$.
First we consider the case $k=0$ then we will quotient
$\R^{n+k}$ by $\Z^{k}$ in our construction. Consider the linear symplectic map
\begin{equation*}
\tau : \overline{T^{*}\R^{n}}\times T^{*}\R^{n} \rightarrow  T^{*}\R^{2n},\qquad
\tau(q,p;Q,P) = \left(\frac{q+Q}{2}, \frac{p+P}{2},P-p,q-Q \right)
\end{equation*}
which could also be seen as $(z,Z) \mapsto \left( \frac{z+Z}{2}, J(z-Z) \right)$ where $J$ is the canonical complex
structure of $\R^{2n}\simeq\C^{n}$. The choice of the linear map is not important to deduce results of
Subsections \ref{se:gf} and \ref{se:cap} (in fact, \cite{San}, \cite{Tray} and \cite{Vit} give different choices).
However, we do not know how to show the linear invariance of Subsection \ref{se:sympinv} without this specific choice.

The Lagrangian embedding $\Gamma_{\psi}:= \tau\circ\text{gr}_{\psi}$ defines
a Lagrangian  $\Gamma_{\psi}(T^{*}M)\subset T^{*}\R^{2n}$ isotopic to the zero section through
the compactly supported Hamiltonian isotopy $s\mapsto\tau\circ\text{gr}_{\psi_{s}}\circ\tau^{-1}$ where
$(\psi_{s})$ is the Hamiltonian flow associated to $\psi$.
As $\Gamma_{\psi}(T^{*}M)$ coincides with the 0-section
outside a compact set, one can extend it to a Lagrangian embedding on the cotangent bundle of the compactified space
$L_{\psi}\subset T^{*}\mathds{S}^{2n}$.

In order to properly define $L_{\psi}$ for $\psi\in\hamc\left(T^{*}(\R^{n}\times\T^{k})\right)$,
let $\widetilde{\psi}\in\text{Ham}_{\Z^{k}}(T^{*}(\R^{n+k}))$ be the unique lift of $\psi$ which is also
lifting the flow $(\psi_{s})$ with $\widetilde{\psi}_{0}=\text{id}$. The application $\Gamma_{\widetilde{\psi}}$
gives a well-defined $\Gamma_{\psi}: T^{*}(\R^{n}\times\T^{k})\hookrightarrow T^{*}(\R^{2n}\times\T^{k}\times\R^{k})$.
We can then compactify the base space: 
$\R^{2n}\times\T^{k}\times\R^{k}\subset B$
where $B$ equals either $\mathds{S}^{2n}\times\T^{k}\times\mathds{S}^{k}$
or $\mathds{S}^{2n}\times\T^{2k}$
and define $L_{\psi}\subset T^{*}B$.

In order to define $F_{\psi}:B\times\R^{N}\rightarrow\R$,
take any generating function of $L_{\psi}$ normalized such that the set of critical points outside
$(\R^{2n}\times\T^{k}\times\R^{k})\times\R^{N}$ has critical value 0
(the set is connected since $L_{\psi}$ coincides with 0-section outside $T^{*}(\R^{2n}\times\T^{k}\times\R^{k})$).

We now extend the construction of $L_{\psi}$ to the case of contactomorphisms. 
Let $\contoc(J^{1}M)$ be the set of contactomorphisms isotopic to identity through compactly supported contactomorphisms.
Given any $\psi\in\hamc\left(T^{*}M\right)$, its lift 
\begin{equation*}
\widehat{\psi}:J^1 M\rightarrow J^1 M,\qquad \widehat{\psi}(x,z) = \left(\psi(x),z+a_{\psi}(x)\right)
\end{equation*}
belongs to $\contoc(J^{1}M)$,
where $a_{\psi}:T^{*}M\rightarrow\R$ is the compactly supported function satisfying
\begin{equation*}
\psi^{*}\lambda - \lambda = \ud a_{\psi}.
\end{equation*}
In \cite{Bhu}, Bhupal gives a mean to define a generating function $F_{\phi}$
associated to such contactomorphism $\phi$ for
$M=\R^{n}\times\T^{k}$ in a way which is compatible with $\psi\mapsto\widehat{\psi}$ in the sense that
$F_{\widehat{\psi}}(q,z;\xi)=F_{\psi}(q;\xi)$. Given any $\phi\in\contoc(J^1\R^{n})$ with 
$\phi^{*}(\ud z-\lambda) = e^{\theta}(\ud z-\lambda) $,
\begin{equation*}
\widehat{\textup{gr}_{\phi}} :
J^{1}\R^{n} \rightarrow J^{1}\R^{n}\times J^{1}\R^{n}\times \R,\qquad
\widehat{\textup{gr}_{\phi}}(x) = (x,\phi(x),\theta(x))
\end{equation*}
is a Legendrian embedding if we endow $J^{1}\R^{n}\times J^{1}\R^{n}\times \R$ with the contact structure
$\ker(e^{\theta}(\ud z-\lambda)-(\ud Z-\Lambda))$, where $(q,p,z;Q,P,Z;\theta)$ denotes coordinates
on $J^{1}\R^{n}\times J^{1}\R^{n}\times \R$ and $\Lambda = \sum_{i} P_{i}\ud Q_{i}$.

For our choice of $\tau$, we must take the following contact identification
\begin{equation*}
\widehat{\tau} :
J^{1}\R^{n}\times J^{1}\R^{n}\times \R \rightarrow  J^{1}\R^{2n+1},
\end{equation*}
\begin{equation*}
\widehat{\tau}(q,p,z;Q,P,Z;\theta) =
\left( \frac{q+Q}{2},\frac{e^{\theta}p+P}{2},z;P-e^{\theta}p,q-Q,e^{\theta}-1;\frac{1}{2}(e^{\theta}p+P)(q-Q)+Z-z\right)
\end{equation*}
so that $\Gamma_{\phi}:=\widehat{\tau}\circ\widehat{\textup{gr}_{\phi}}$ is an embedding of
a Legendrian compactly isotopic to
the 0-section of $J^{1}\R^{2n+1}$. The construction of $\Gamma_{\phi}$ descends well from $\R^{n+k}$ to $\R^{n}\times\T^{k}$
taking the lift of $\phi\in\contoc(J^{1}(\R^{n}\times\T^{k}))$ which is contact-isotopic to identity.

In fact we will rather be interested by $T^{*}M\times S^{1}\simeq J^{1}M/\Z\frac{\partial}{\partial z}$ and
$\phi\in\contoc(T^{*}M\times S^{1})$
which can be identified to the set of $\Z\frac{\partial}{\partial z}$-equivariant
contactomorphism of $J^{1}M$ isotopic to identity. The construction descends well to the last quotient and we obtain
a well-defined Legendrian embedding
$\Gamma_{\phi}: T^{*}(\R^n\times \T^{k})\times S^{1}\hookrightarrow 
J^{1}\left(\R^{2n}\times\T^{k}\times\R^{k}\times S^{1}\right)$.

We can then compactify the base space
$\R^{2n}\times\T^{k}\times\R^{k}\times S^{1}\subset
B\times S^{1}$,
define $L_{\psi}\subset J^1(B\times S^{1})$
and take as $F_{\phi}$ any generating function of $L_{\phi}$ normalized such that the set of critical points outside
$(\R^{2n}\times\T^{k}\times\R^{k}\times S^{1})\times\R^{N}$ has critical value 0.

\subsection{Symplectic and contact invariants}\label{se:cap}
The symplectic invariants presented here are due to Viterbo \cite{Vit}.
The generalization to the contact case is due to Sandon \cite{San}.

throughout this subsection, $B$ denotes a compactification of $T^{*}(\R^n\times\T^k)$.
Given any $\psi\in\hamc\left(T^{*}(\R^{n}\times\T^{k})\right)$
and any non-zero $\alpha\in H^{*}(B)$, consider
\begin{equation*}
c(\alpha, \psi) := c\left(\alpha, F_{\psi}\right).
\end{equation*}

\begin{prop}[{Viterbo, \cite[Prop. 4.2, Cor. 4.3, Prop. 4.6]{Vit}}]\label{prop:cham}
Let $(\psi_t)$ be a compactly supported Hamiltonian isotopy of $T^{*}(\R^n\times\T^k)$ with
$\psi_0=\id$ and $\psi:=\psi_1$. Let $H_t:T^{*}(\R^n\times\T^k)\rightarrow\R$ be
the Hamiltonians generating $(\psi_t)$.
Given any non-zero $\alpha\in H^{*}(B)$,
\begin{enumerate}
\item\label{item:chamA}
There is a one-to-one correspondence between critical
points of $F$ and fixed points $x$ of $\psi$ such that $t\mapsto \psi_{t}(x)$
is a contractible loop when $t\in[0,1]$
given by $(x,\xi)\mapsto x$. Moreover, if
$(x_\alpha,\xi_\alpha)\in\crit(F_\psi)$ satisfies $F_\psi(x_\alpha,\xi_\alpha)=c(\alpha,F_\psi)$, then
\begin{equation*}
c(\alpha,\psi) = a_{\psi}(x_{\alpha})
= \int_{0}^{1}\left(\la p(t),\dot{q}(t) \ra - H_{t}(\psi_t(x_{\alpha}))\right)\ud t,
\end{equation*}
where $(q(t),p(t)):=\psi_t(x_{\alpha})$. The value $a_{\psi}(x)$ will be called the \emph{action} of the fixed point $x$.

\item\label{item:chamH}
If $H_{t}\leq 0$, then $c(\alpha,\psi)\geq 0$.

\item\label{item:chaminv}
If $(\varphi^{s})$ is a symplectic isotopy of $T^{*}(\R^{n}\times\T^{k})$, then
$s\mapsto c(\alpha,\varphi^{s}\circ\psi\circ(\varphi^{s})^{-1})$ is constant.

\item\label{item:chamid}
If $\mu$ is the orientation class of $B$,
\begin{equation*}
c(1, \psi) \leq 0 \leq c(\mu, \psi) \quad \text{with} \quad
c(1, \psi) = c(\mu, \psi)\; \Leftrightarrow\; \psi = \text{id},
\end{equation*}
\begin{equation*}
c(\mu,\psi) = -c(1,\psi^{-1}).
\end{equation*}

\end{enumerate}
\end{prop}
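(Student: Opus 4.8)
The plan is to derive the four items from the min-max formalism of Proposition~\ref{prop:minmax} together with the explicit description of $L_\psi$ as (the compactification of) $\tau\circ\mathrm{gr}_\psi(T^*M)$.

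For item~(\ref{item:chamA}), I would recall the general principle that critical points of a generating function $F$ correspond bijectively to intersections of the generated Lagrangian with the zero section, via $(q,\xi)\mapsto\iota_F(q,\xi)$. Since the linear map $\tau$ carries the diagonal of $\overline{T^*M}\times T^*M$ onto the zero section (immediate from its formula), the points of $L_\psi$ on the zero section of $T^*B$ lying over $T^*(\R^{2n}\times\T^k\times\R^k)$ correspond to fixed points of the lift $\widetilde\psi$; descending from $\R^{n+k}$ to $\R^n\times\T^k$ turns the existence of such a lifted fixed point into the condition that $t\mapsto\psi_t(x)$ be contractible. By the compatibility of the Bhupal--Sikorav construction of $F_\psi$ with the chosen normalisation (the critical set at infinity, where $\psi=\id$, sits at level $0$), the value of $F_\psi$ at such a critical point is the primitive $a_\psi(x)$. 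To evaluate $a_\psi(x)$, differentiate $t\mapsto a_{\psi_t}$ using Cartan's formula and $\iota_{X_{H_t}}\omega=-\ud H_t$ to get $\tfrac{\ud}{\ud t}a_{\psi_t}=\psi_t^*(\lambda(X_{H_t})-H_t)$, then integrate from $a_{\id}=0$ and use $\lambda(X_{H_t})=\la p,\dot q\ra$; this gives the stated integral. Finally $c(\alpha,\psi)=c(\alpha,F_\psi)$ is a critical value of $F_\psi$, hence equals $a_\psi(x_\alpha)$ for the selected fixed point.

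For item~(\ref{item:chamH}), I would use monotonicity of $c$ under pointwise domination of generating functions: if two functions quadratic at infinity share the same quadratic form and satisfy $F'\geq F''$, then $\{F'<\lambda\}\subset\{F''<\lambda\}$, so $c(\alpha,F')\geq c(\alpha,F'')$ by the definition of $c$. By Proposition~\ref{prop:minmax}(\ref{item:inv}) one may compute $c(\alpha,\psi)$ from any generating function of $L_\psi$, in particular from the broken-geodesic (Sikorav) generating function; when $H_t\leq0$, each elementary short-time generating function in that construction depends monotonically on the Hamiltonian, so (after matching normalisations) the resulting $F$ dominates a generating function of $\id$, giving $c(\alpha,\psi)\geq c(\alpha,\id)=0$. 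Items~(\ref{item:chaminv}) and~(\ref{item:chamid}) are then essentially formal. For (\ref{item:chaminv}), the action spectrum $\{a_\psi(x)\}$ is the set of critical values of $F_\psi$ — nowhere dense by Sard — and it is invariant under conjugation by a compactly supported (hence exact) symplectomorphism, since conjugation shifts $a$ by a compactly supported exact term that vanishes on fixed points; by item~(\ref{item:chamA}) the continuous path $s\mapsto c(\alpha,\varphi^s\psi(\varphi^s)^{-1})$ takes values in this fixed nowhere-dense set, hence is constant. For (\ref{item:chamid}): since $-F_\psi$ is (equivalent to) a generating function of $\psi^{-1}$ under the identifications of Subsection~\ref{se:gfhc}, Proposition~\ref{prop:minmax}(\ref{item:duality}) gives $c(\mu,\psi)=-c(1,-F_\psi)=-c(1,\psi^{-1})$; the bounds $c(1,\psi)\leq0\leq c(\mu,\psi)$ follow from this together with sub-additivity (Proposition~\ref{prop:minmax}(\ref{item:additivity})) applied to the composition $\psi\circ\psi^{-1}$ and the normalisation value $0$; and if $c(1,\psi)=c(\mu,\psi)$ then $F_\psi$ has a single critical value, which by a standard Morse-theoretic argument on the jumps of $H^*(E^\lambda,E^{-\infty})$ forces $L_\psi$ to be the zero section, i.e.\ $\psi=\id$.

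The genuine technical work is concentrated in item~(\ref{item:chamH}) and in the composition formula used for item~(\ref{item:chamid}): both rest on Sikorav's broken-geodesic construction of generating functions for products of Hamiltonian diffeomorphisms and on verifying that monotonicity and the normalisation survive the compactifications and the specific choice of $\tau$. The action computation in item~(\ref{item:chamA}) is routine, and granting those tools, items~(\ref{item:chaminv}) and~(\ref{item:chamid}) are formal consequences of the min-max formalism.
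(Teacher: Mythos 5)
The paper does not actually prove this proposition: it quotes Viterbo's Prop.\ 4.2, Cor.\ 4.3 and Prop.\ 4.6 and only remarks that ``the proofs given by Viterbo immediately generalize to this setting.'' Your sketch reconstructs exactly those standard arguments (critical points of $F_\psi$ as intersections $L_\psi\cap 0_B$ pulled back through $\tau$, monotonicity of the min-max under pointwise domination of generating functions for the broken-geodesic construction, spectrality plus continuity for conjugation invariance, and duality plus the composition formula for the last item), so in substance you are supplying the proof the paper delegates to \cite{Vit}. Two points deserve tightening. First, in item~(\ref{item:chaminv}) the isotopy $\varphi^{s}$ is \emph{not} assumed compactly supported --- the paper later applies this invariance to linear symplectomorphisms --- so your justification ``compactly supported (hence exact)'' is not available; the correct reason the action spectrum is preserved is that $(\varphi^{s})^{*}\lambda-\lambda$ is closed and the orbits entering the spectrum are contractible loops, on which a closed $1$-form integrates to zero. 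Second, in item~(\ref{item:chamid}) sub-additivity applied to $\psi\circ\psi^{-1}$ together with duality yields only $c(1,\psi)\leq c(\mu,\psi)$, not the separate signs; the inequality $c(1,\psi)\leq 0$ requires the additional observation that over any base point outside the support of $\psi$ (e.g.\ the point at infinity of the compactification) the generating function restricts to its quadratic form normalised at $0$, so the class $T1$ already survives in every sublevel $E^{\varepsilon}$ with $\varepsilon>0$; the inequality $0\leq c(\mu,\psi)$ then follows by duality. With these two repairs your outline matches the cited proofs, including the acknowledged reliance on the composition formula for generating functions, which is genuinely needed and is the main technical input from \cite{Vit} beyond Proposition~\ref{prop:minmax}.
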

These results were not stated with this generality in \cite{Vit} but the proofs given by Viterbo
immediately generalize to this setting.

Given any open bounded subset $U\subset T^{*}(\R^{n}\times\T^{k})$ and any non-zero
$\alpha\in H^{*}(B)$,
Viterbo defines the symplectic invariant
\begin{equation*}
c(\alpha, U) := \sup_{\psi\in \hamc(U)} c(\alpha, \psi).
\end{equation*}
This symplectic invariant extends to any unbounded open set $U\subset T^{*}(\R^{n}\times\T^{k})$
by taking the supremum of the $c(\alpha,V)$ among
the open bounded subsets $V\subset U$.

\begin{prop}[Bustillo, Viterbo]\label{prop:cU}
For all open bounded sets $U,V\subset T^{*}(\R^{n}\times\T^{k})$ and any non-zero
$\alpha\in H^{*}(B)$,
\begin{enumerate}
\item\label{item:cUinv}
if $(\varphi^{s})$ is a symplectic isotopy of $T^{*}(\R^n\times\T^k)$, then
$s\mapsto c(\alpha, \varphi^{s}(U))$ is constant,
\item\label{item:cUmon}
$U\subset V$ implies $c(\alpha, U)\leq c(\alpha,V)$,
\item\label{item:cUred}
if $\mu_{1}$ and $\mu_2$ are the orientation classes of the compactification of $T^{*}(\R^{n}\times\T^{k})$
and $\R^k$ respectively, then for any neighborhood $W$ of $0\in\R^{k}$,
\begin{equation*}
c(\mu_1,U)\leqslant c(\mu_1\otimes\mu_2\otimes 1, U\times W\times \T^k).
\end{equation*}
\item\label{item:cUB}
if $B^{2n+2k}_{r}\subset T^{*}(\R^{n}\times\T^{k})$ is an embedded round ball of radius $r$
and $\mu$ is the orientation class of
$B$, then
$c(\mu,B^{2n+2k}_{r}) = \pi r^2.$
\end{enumerate}
\end{prop}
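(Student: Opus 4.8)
The four assertions are essentially independent and of increasing difficulty; I would prove them in order.

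\emph{Points \textup{(\ref{item:cUinv})} and \textup{(\ref{item:cUmon})}} are formal consequences of the definition of $c(\alpha,U)$ as a supremum. For (\ref{item:cUmon}), if $U\subset V$ then every compactly supported Hamiltonian diffeomorphism supported in $U$ is supported in $V$, so $\hamc(U)\subseteq\hamc(V)$ and $c(\alpha,U)$ is a supremum over a subfamily of the one defining $c(\alpha,V)$; the unbounded case is immediate since bounded open subsets of $U$ are bounded open subsets of $V$. For (\ref{item:cUinv}), conjugation $\psi\mapsto\varphi^{s}\circ\psi\circ(\varphi^{s})^{-1}$ is a bijection $\hamc(U)\xrightarrow{\ \sim\ }\hamc(\varphi^{s}(U))$ (the support of $\psi$ being carried to $\varphi^{s}(\supp\psi)$), and by Proposition~\ref{prop:cham}(\ref{item:chaminv}) one has $c(\alpha,\varphi^{s}\circ\psi\circ(\varphi^{s})^{-1})=c(\alpha,\psi)$ for all $s$; taking the supremum over $\psi\in\hamc(U)$ gives $c(\alpha,\varphi^{s}(U))=c(\alpha,U)$ when $U$ is bounded, and the general case follows by exhausting $U$ by bounded open sets, using that $\varphi^{s}$ is a proper diffeomorphism.

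\emph{Point \textup{(\ref{item:cUred})}} is Viterbo's reduction inequality; I would deduce it from Proposition~\ref{prop:redgf}. Given $\psi\in\hamc(U)$, choose a compactly supported time-dependent Hamiltonian $H_{t}$ on $T^{*}(\R^{n}\times\T^{k})$ with support in $U$ and time-$1$ flow $\psi$, and a bump function $\rho\colon\R^{k}\to[0,1]$ equal to $1$ near $0$ and compactly supported in $W$; set $\widetilde{H}_{t}(x,\theta,p):=\rho(p)H_{t}(x)$ on $T^{*}(\R^{n}\times\T^{k})\times T^{*}\T^{k}$. Its time-$1$ flow $\widetilde{\psi}$ lies in $\hamc(U\times W\times\T^{k})$, and since $\partial_{\theta}\widetilde{H}_{t}\equiv 0$ the flow preserves the fibre coordinate $p$; on $\{p=0\}$, where $\rho\equiv 1$ and $\mathrm{d}\rho=0$, it is exactly $\psi\times\mathrm{id}_{\T^{k}}$. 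Consequently the symplectic reduction of $L_{\widetilde{\psi}}$ along the coisotropic corresponding to $\{p=0\}$ is $L_{\psi}\times 0_{T^{*}\T^{k}}$, so the restriction $G$ of $F_{\widetilde{\psi}}$ to $\{p=0\}$ (in the sense of Proposition~\ref{prop:redgf}) is a generating function of $L_{\psi}\times 0_{T^{*}\T^{k}}$; by Theorem~\ref{thm:gfqi} and Proposition~\ref{prop:minmax}(\ref{item:inv}), after suitable normalisation $G$ has the same min-max values as $F_{\psi}$ pulled back to $B\times\T^{k}$, a function not depending on the $\T^{k}$-coordinate. Proposition~\ref{prop:redgf}(\ref{item:eqred}) then gives $c(\mu_{1}\otimes 1,G)=c(\mu_{1},F_{\psi})=c(\mu_{1},\psi)$, and Proposition~\ref{prop:redgf}(\ref{item:ineqred}), applied to $F_{\widetilde{\psi}}$ on $(B\times\T^{k})\times\mathds{S}^{k}$ with the compactified fibre $\mathds{S}^{k}$ as the sliced-off factor, gives $c(\mu_{1}\otimes 1\otimes\mu_{2},\widetilde{\psi})\geq c(\mu_{1}\otimes 1,G)=c(\mu_{1},\psi)$. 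Reordering the factors of the compactification to match the statement and taking the supremum over $\psi\in\hamc(U)$ yields $c(\mu_{1},U)\leq c(\mu_{1}\otimes\mu_{2}\otimes 1,U\times W\times\T^{k})$.

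\emph{Point \textup{(\ref{item:cUB})}} is Viterbo's computation of the capacity of a ball. For the lower bound, using (\ref{item:cUinv}) I would realise $B^{2n+2k}_{r}$ as a round ball centred at a point $z_{0}$ in a Darboux chart, fix $\varepsilon>0$ small and $\rho_{1}\in(\pi r^{2}-\varepsilon,\pi r^{2})$, and take the autonomous radial Hamiltonian $H=h(\pi|z-z_{0}|^{2})$ with $h\colon[0,\pi r^{2})\to\R$ smooth, $h(0)=-(\pi r^{2}-\varepsilon)$, $0\leq h'<1$ everywhere, and $h'$ a single bump supported in a compact subset of $(0,\rho_{1})$ (so that $h\equiv 0$ near $\pi r^{2}$). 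Then $H\leq 0$, so $c(1,\psi)=0$ by Proposition~\ref{prop:cham}(\ref{item:chamH}) and (\ref{item:chamid}); the fixed points of $\psi$ with contractible orbit are exactly the points where $h'(\pi|z-z_{0}|^{2})=0$, all with constant orbit, and the corresponding action $-h(\pi|z-z_{0}|^{2})$ takes only the values $\pi r^{2}-\varepsilon$ (near $z_{0}$) and $0$ (near the boundary sphere), so by Proposition~\ref{prop:cham}(\ref{item:chamA}) the critical values of $F_{\psi}$ are exactly $0$ and $\pi r^{2}-\varepsilon$. Since $\psi\neq\mathrm{id}$, Proposition~\ref{prop:cham}(\ref{item:chamid}) forces $c(\mu,\psi)\neq c(1,\psi)=0$, hence $c(\mu,\psi)=\pi r^{2}-\varepsilon$ and $c(\mu,B^{2n+2k}_{r})\geq\pi r^{2}$. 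For the upper bound I would use the inclusion $B^{2n+2k}_{r}\subset B^{2}_{r}\times T^{*}(\R^{n-1}\times\T^{k})\times\R$, the disc lying in the first symplectic plane, together with (\ref{item:cUmon}) and the non-squeezing estimate $c\bigl(\mu,\,B^{2}_{r}\times T^{*}(\R^{n-1}\times\T^{k})\times\R\bigr)\leq\pi r^{2}$; this last bound is the $T^{*}(\R^{n}\times\T^{k})$-analogue of Viterbo's estimate for the symplectic cylinder, obtainable either by the same generating-function argument or by combining (\ref{item:cUinv})--(\ref{item:cUred}) with the elementary case $n=1$, $k=0$, in which a Hamiltonian diffeomorphism supported in a disc of area $\pi r^{2}$ has capacity at most $\pi r^{2}$.

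\emph{Main obstacle.} I expect the hard part to be the upper bound in (\ref{item:cUB}), i.e.\ the non-squeezing estimate above: this is the only place where genuine symplectic rigidity enters, and everything else is formal once Proposition~\ref{prop:redgf} is available. A secondary, purely technical point in (\ref{item:cUred}) is verifying that the restriction $G$ of $F_{\widetilde{\psi}}$ is again a (stabilised) generating function of $L_{\psi}$, that is, that the symplectic reduction of $L_{\widetilde{\psi}}$ at $\{p=0\}$ is clean.
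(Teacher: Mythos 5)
Your points \textup{(\ref{item:cUinv})} and \textup{(\ref{item:cUmon})} coincide with the paper's argument. For \textup{(\ref{item:cUred})} and \textup{(\ref{item:cUB})} the paper does not actually give proofs: it points to Bustillo's proof of his Proposition~2.3 and to a complete computation of $c(\mu,B^{2n+2k}_r)$ in the literature. What you supply for \textup{(\ref{item:cUred})} is essentially Bustillo's argument (lift the Hamiltonian by a cut-off in the fibre variable, observe that the reduction of $L_{\widetilde{\psi}}$ at $\{p=0\}$ is $L_{\psi}\times 0_{T^{*}\T^{k}}$, then combine both halves of Proposition~\ref{prop:redgf}); the clean-reduction technicality you flag is real but harmless here since the flow preserves $\{p=0\}$. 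Your lower bound in \textup{(\ref{item:cUB})} via radial Hamiltonians, Proposition~\ref{prop:cham}~(\ref{item:chamH}) and (\ref{item:chamid}) is the standard and correct computation, and it is the only direction the paper actually uses (in Lemma~\ref{lem:capineq}).

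The one genuine soft spot is the upper bound in \textup{(\ref{item:cUB})}. Reducing it to the cylinder estimate via monotonicity is the right move, but your proposed shortcut ``combining (\ref{item:cUinv})--(\ref{item:cUred}) with the elementary case $n=1$, $k=0$'' cannot work as stated: point \textup{(\ref{item:cUred})} bounds the capacity of a product \emph{from below} by the capacity of a factor, whereas here you need an upper bound for the product $B^{2}_{r}\times T^{*}(\R^{n-1}\times\T^{k})\times\R$ in terms of the disc. The inequality that goes the right way is the first half of Proposition~\ref{prop:redgf}~(\ref{item:ineqred}), $c(\alpha\otimes 1,F)\leq c(\alpha,F_{w})$, but it applies to the class $\alpha\otimes 1$, not to the full orientation class $\mu$, so some further argument (Viterbo's actual proof of the cylinder bound, or the reference the paper cites) is unavoidable. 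You correctly identify this as the only place where genuine rigidity enters; just be aware that the formal machinery of \textup{(\ref{item:cUinv})}--\textup{(\ref{item:cUred})} alone will not deliver it.
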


\begin{proof}
Point (\ref{item:cUinv}) is a consequence of Proposition~\ref{prop:cham} (\ref{item:chaminv}).
Point (\ref{item:cUmon}) is a consequence of the definition as a supremum.
Point (\ref{item:cUred}) is proved in the proof of \cite[Prop. 2.3]{Bus}.
Indeed, Bustillo makes use of (\ref{item:cUred}) to deduce his Proposition 2.3
by taking the infimum of $c(\mu_1\otimes\mu_2\otimes 1, U\times V\times \T^k)$
among neighborhoods $U\supset X$ and $W\supset \{ 0\}$ (using Bustillo's notations).
We refer to \cite[Sect. 3.8]{ABKLB} for a complete proof of (\ref{item:cUB}).
\end{proof}

Now, we give the contact extension of these invariants.
Given any $\phi\in\contoc\left(T^{*}(\R^{n}\times\T^{k})\times S^1\right)$
and any non-zero $\alpha\in H^{*}(B\times S^1)$, consider
\begin{equation*}
c(\alpha, \phi) := c\left(\alpha, F_{\phi}\right).
\end{equation*}
The following Proposition is due to Sandon. Since our setting is slightly different,
we provide precise references for the reader's convenience.

\begin{prop}[{Sandon, \cite{San}}]\label{prop:ccont}
\begin{enumerate}
\item\label{item:ccontdual}
Given any $\phi\in\contoc(T^{*}(\R^{n}\times\T^{k})\times S^1)$,
if $\mu$ is the orientation class of $B\times S^1$, then
\begin{equation*}
c(\mu,\phi) = 0\quad \Leftrightarrow\quad  c(1,\phi^{-1}) = 0.
\end{equation*}
\item\label{item:ccontminus}
Given any $\phi\in\contoc(T^{*}(\R^{n}\times\T^{k})\times S^1)$ and
any non-zero $\alpha\in H^{*}(B\times S^1)$,
if $F_{\phi}$ is a generating function of $\phi$, then
\begin{equation*}
\left\lceil c\left(\alpha,\phi^{-1}\right)\right\rceil = \left\lceil c\left(\alpha,-F_{\phi}\right)\right\rceil.
\end{equation*}

\item\label{item:ccontinv}
Given any $\phi\in\contoc(T^{*}(\R^{n}\times\T^{k})\times S^1)$,
any non-zero $\alpha\in H^{*}(B\times S^1)$,
if $(\psi^s)$ is a contact isotopy of $T^{*}(\R^{n}\times\T^{k})\times S^1$, then
$s\mapsto \lceil c(\alpha,\psi^{s}\circ\phi\circ(\psi^{s})^{-1})\rceil$
is constant.

\item\label{item:ccontcomp}
For each $\psi\in\hamc(T^{*}(\R^{n}\times\T^{k}))$ for each non-zero cohomology class
$\alpha\in H^{*}(B)$,
if $\ud z$ denotes the orientation class of $S^1$,
then
\begin{equation*}
c\left(\alpha\otimes 1,\widehat{\psi}\right) =
c\left(\alpha\otimes\ud z,\widehat{\psi}\right)=c\left(\alpha,\psi\right).
\end{equation*}
\end{enumerate}
\end{prop}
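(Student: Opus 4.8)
The plan is to obtain the four assertions from the facts about generating functions recorded in Propositions~\ref{prop:minmax} and \ref{prop:redgf}, together with the compatibility of Sandon's construction of Subsection~\ref{se:gfhc} with the relevant operations; in substance this is the content of \cite{San}, and the only delicate point --- the reason three of the four statements carry a ceiling --- is the integer translation of the Reeb coordinate that enters when one passes to the $\Z$-equivariant lift to the $1$-jet space. I would begin with item (\ref{item:ccontcomp}), which is immediate: by construction the generating function $F_{\widehat\psi}(q,z;\xi)=F_\psi(q;\xi)$ does not depend on the $S^1$-coordinate $z$, and the normalizations of $F_\psi$ and of $F_{\widehat\psi}$ (critical value $0$ on the critical set lying over the region added by compactification) are compatible, so the restriction $(F_{\widehat\psi})_z$ is exactly $F_\psi$ for every $z$. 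Then Proposition~\ref{prop:redgf}(\ref{item:eqred}), applied with $V=B$ and $W=S^1$, gives $c(\alpha\otimes\beta,\widehat\psi)=c(\alpha\otimes\beta,F_{\widehat\psi})=c(\alpha,(F_{\widehat\psi})_z)=c(\alpha,F_\psi)=c(\alpha,\psi)$ for every non-zero $\beta\in H^{*}(S^1)$; specializing to $\beta=1$ and to $\beta=\ud z$ is the assertion.

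For item (\ref{item:ccontminus}), I would first note that $-F_\phi$ is again a generating function: it is quadratic at infinity with associated form $-Q$, and it is normalized, because $F_\phi$, hence $-F_\phi$, vanishes on its critical set over the region at infinity. So $-F_\phi$ generates a Legendrian $L'\subset J^{1}(B\times S^1)$ which coincides with the $0$-section outside a compact set, namely the image of $L_\phi$ under $(q,p,z)\mapsto(q,-p,-z)$. The real work is the comparison of $L'$ with $L_{\phi^{-1}}$: using the explicit contact identification $\widehat\tau$ of Subsection~\ref{se:gfhc} and the fact that $\widehat{\textup{gr}_{\phi^{-1}}}$ is obtained from $\widehat{\textup{gr}_{\phi}}$ by exchanging the two $1$-jet factors (while carrying along the conformal factor $e^{\theta}$), one identifies $L'$ with $L_{\phi^{-1}}$ up to a fiber-preserving diffeomorphism, stabilization, and the $\Z$-ambiguity in the $z$-coordinate produced by the lift, all operations that leave $\lceil c(\alpha,\cdot)\rceil$ unchanged. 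This is precisely the computation carried out in \cite{San}, and I would only check that it uses nothing beyond the hypothesis that $\phi$ is compactly supported in the sense of Subsection~\ref{se:gfhc}. The conclusion is $\lceil c(\alpha,-F_\phi)\rceil=\lceil c(\alpha,\phi^{-1})\rceil$.

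Item (\ref{item:ccontdual}) then follows. Feeding $\alpha=1$ into item (\ref{item:ccontminus}) and using the duality $c(1,-F_\phi)=-c(\mu,F_\phi)$ of Proposition~\ref{prop:minmax}(\ref{item:duality}) gives $\lceil c(1,\phi^{-1})\rceil=-\lfloor c(\mu,\phi)\rfloor$; combined with the elementary sign bounds $c(1,\psi)\leq 0\leq c(\mu,\psi)$, valid for compactly supported contactomorphisms exactly as in Proposition~\ref{prop:cham}(\ref{item:chamid}) for Hamiltonian maps, and with the fact that $0$ is always one of the critical values of $F_\phi$ (it is the value used to normalize it), one deduces $c(\mu,\phi)=0\Leftrightarrow c(1,\phi^{-1})=0$, following \cite{San} for the last refinement from the ceiling statement to exact equality. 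For item (\ref{item:ccontinv}), Theorem~\ref{thm:contgfqi} furnishes a continuous family of normalized generating functions for the Legendrians $L_{\psi^{s}\phi(\psi^{s})^{-1}}$, and the contact conjugation by $\psi^{s}$ is compatible with Sandon's construction up to the same fiber-preserving diffeomorphism, stabilization and lift-ambiguity as in item (\ref{item:ccontminus}); hence $s\mapsto\lceil c(\alpha,\psi^{s}\phi(\psi^{s})^{-1})\rceil$ is locally constant, and therefore constant on $[0,1]$. This is the contact counterpart of Proposition~\ref{prop:cham}(\ref{item:chaminv}).

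I expect the main obstacle, common to items (\ref{item:ccontminus})--(\ref{item:ccontinv}), to be exactly this bookkeeping: showing that the discrepancy between a naively produced generating function (obtained by negating, or by transporting along a conjugation) and Sandon's normalized generating function of the genuine contactomorphism amounts only to the $\Z$-worth of ambiguity coming from the lift to the $1$-jet space, so that $\lceil c(\alpha,\cdot)\rceil$ is unaffected. This is where the specific choice of $\widehat\tau$ is used, and it is the part of \cite{San} that we import wholesale; item (\ref{item:ccontcomp}) and the reductions to Proposition~\ref{prop:minmax} are routine by comparison.
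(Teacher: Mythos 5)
Your proposal is correct and follows essentially the same route as the paper: items (\ref{item:ccontdual})--(\ref{item:ccontinv}) are delegated to Sandon's Lemmas 3.9, 3.10 and 3.15 exactly as in the text (including your honest acknowledgment that the ceiling version of item (\ref{item:ccontminus}) plus duality does not by itself yield the exact vanishing equivalence of item (\ref{item:ccontdual})), and your proof of item (\ref{item:ccontcomp}) via Proposition~\ref{prop:redgf} (\ref{item:eqred}), using that $F_{\widehat{\psi}}$ is independent of $z$, is just a repackaging of the paper's K\"unneth/sublevel-set computation. One small caution: in item (\ref{item:ccontinv}) the constancy of $s\mapsto\lceil c\rceil$ is not a formal consequence of ``compatibility up to stabilization and lift-ambiguity'' applied to a continuous family of generating functions --- it rests on Sandon's integrality alternative (a continuous path of such min-max values either avoids $\Z$ or is a constant integer, cf.\ Lemma~\ref{lem:technical}), which is exactly what Lemma 3.15 of \cite{San} supplies.
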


\begin{proof}
Let $F_{\phi}$ be the generating function of
$\phi\in\contoc(T^{*}(\R^{n}\times\T^{k})\times S^1)$.
According to duality formula in Proposition~\ref{prop:minmax} (\ref{item:duality}), $c(\mu,\phi) = -c(1,-F_{\phi})$.
Points (\ref{item:ccontdual}) and (\ref{item:ccontminus}) then follow from
\cite[lemmas 3.9 and 3.10]{San}
taking $L=0\text{-section}$ and $\Psi = \widehat{\tau}\circ\widehat{\textup{gr}_{\phi^{-1}}}\circ\widehat{\tau}^{-1}$:
\begin{equation*}
c(1,F_{\phi^{-1}}) = 0\quad \Leftrightarrow\quad c(1,-F_{\phi}) = 0
\end{equation*}
and
\begin{equation*}
\left\lceil c\left(1,F_{\phi^{-1}}\right)\right\rceil = \left\lceil c\left(1,-F_{\phi}\right)\right\rceil.
\end{equation*}
Point (\ref{item:ccontinv}) is a consequence of \cite[lemma 3.15]{San} applied to
$c_{t}=c(\alpha,\psi^{t}\circ\phi\circ(\psi^{t})^{-1})$.
Point (\ref{item:ccontcomp}) is given by the proof of \cite[Prop. 3.18]{San}. Indeed,
let $i_{a}:(E^a,E^{-\infty})\hookrightarrow (E^\infty,E^{-\infty})$ and
$\widetilde{i_{a}}:(\widetilde{E}^a,\widetilde{E}^{-\infty})\hookrightarrow
(\widetilde{E}^\infty,\widetilde{E}^{-\infty})$ be the inclusion maps of sublevel sets of $F_{\psi}$ and
$F_{\widehat{\psi}}$ respectively.
Then $\widetilde{E}^a = E^a\times S^1$ and, after identifying
$H^{*}(\widetilde{E}^a,\widetilde{E}^{-\infty})$
with $H^{*}(E^a,E^{-\infty})\otimes H^{*}(S^1)$, the induced maps in cohomology
$\widetilde{i_{a}}^{*}$
is given by
\begin{equation*}
\widetilde{i_{a}}^{*} = i_{a}^{*}\otimes\textup{id}.
\end{equation*}
Thus $\widetilde{i_{a}}^{*}(\alpha\otimes\beta) = (i_{a}^{*}\alpha)\otimes\beta$ is
non-zero if and only if $i_{a}^{*}\alpha$ is non-zero, where $\beta\in\{\ud z, 1\}$.
\end{proof}

Let $\mu$ be the orientation class of $B\times S^1$,
given any open bounded subset $U\subset T^{*}(\R^{n}\times\T^{k})\times S^{1}$ and any non-zero
$\alpha\in H^{*}(B\times S^1)$, consider
\begin{equation*}
c(\alpha, U) := \sup_{\phi\in \contoc(U)} \lceil c(\alpha, \phi)\rceil
\end{equation*}
and
\begin{equation*}
\gamma (U) := \inf\left\{ \left\lceil c\big(\mu,\phi\big)\right\rceil +
\left\lceil c\left(\mu,\phi^{-1}\right)\right\rceil
|\ \phi\in\contoc\left(T^{*}(\R^{n}\times\T^{k})\times S^{1}\right)\text{ such that } \phi(U)\cap U = \emptyset \right\},
\end{equation*}
These contact invariants extend to any unbounded open set $U\subset T^{*}(\R^{n}\times\T^{k})\times S^{1}$
by taking the supremum among
the open bounded subsets $V\subset U$.

\begin{prop}[Sandon \cite{San}]\label{prop:ccU}
For all open bounded sets $U,V\subset T^{*}(\R^{n}\times\T^{k})\times S^1$ and any non-zero
$\alpha\in H^{*}(B\times S^1)$,
\begin{enumerate}
\item\label{item:ccUinv}
if $(\varphi^{s})$ is a contact isotopy of $T^{*}(\R^{n}\times\T^{k})\times S^1$, then
$s\mapsto c(\alpha, \varphi^{s}(U))$ is constant,
\item\label{item:ccUmon}
$U\subset V$ implies $c(\alpha, U)\leq c(\alpha,V)$,
\item\label{item:ccUcomp}
given any open subset
$W\subset T^{*}(\R^{n}\times\T^{k})$, for each non-zero class
$\beta\in H^{*}(B)$,
if $\ud z$ denotes the orientation class of $S^1$, then
\begin{equation*}
c(\beta\otimes\ud z,W\times S^{1}) = \lceil c(\beta, W)\rceil.
\end{equation*}
\end{enumerate}
\end{prop}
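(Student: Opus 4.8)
The plan is to treat the three items in order of increasing difficulty, reducing in each case to bounded open sets: since contactomorphisms and contact isotopies map precompact sets to precompact sets, for an unbounded open set the stated identities follow from the bounded case by passing to the supremum over bounded open subsets. Item~(\ref{item:ccUmon}) is then immediate from the definition as a supremum, as $U\subset V$ gives $\contoc(U)\subset\contoc(V)$ (a contactomorphism supported in $U$ is a fortiori supported in $V$). For item~(\ref{item:ccUinv}), fix $s$ and a bounded open $U$: every $\phi\in\contoc(\varphi^{s}(U))$ is of the form $\varphi^{s}\circ\psi\circ(\varphi^{s})^{-1}$ with $\psi:=(\varphi^{s})^{-1}\circ\phi\circ\varphi^{s}\in\contoc(U)$, and conversely; by Proposition~\ref{prop:ccont}(\ref{item:ccontinv}), applied to the isotopy $t\mapsto\varphi^{t}$, the number $\lceil c(\alpha,\varphi^{t}\circ\psi\circ(\varphi^{t})^{-1})\rceil$ is independent of $t$, so the supremum defining $c(\alpha,\varphi^{s}(U))$ has the same value for every $s$.

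For item~(\ref{item:ccUcomp}), the substantive one, I would prove the two inequalities separately after reducing to bounded $W$: the image in $T^{*}(\R^{n}\times\T^{k})$ of a bounded open subset $O$ of $W\times S^{1}$ is a bounded open $V\subset W$ with $O\subset V\times S^{1}$, so by item~(\ref{item:ccUmon}) it suffices to prove $c(\beta\otimes\ud z,V\times S^{1})=\lceil c(\beta,V)\rceil$ for every bounded open $V$. For the inequality ``$\geq$'': given $\psi\in\hamc(V)$, the conformal primitive $a_{\psi}$ is supported in $V$, so the contact lift $\widehat{\psi}$ lies in $\contoc(V\times S^{1})$, and Proposition~\ref{prop:ccont}(\ref{item:ccontcomp}) gives $c(\beta\otimes\ud z,\widehat{\psi})=c(\beta,\psi)$; taking ceilings and then the supremum over $\psi\in\hamc(V)$, and using that $\lceil\,\cdot\,\rceil$ commutes with suprema of real numbers, yields $c(\beta\otimes\ud z,V\times S^{1})\geq\lceil c(\beta,V)\rceil$.

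The reverse inequality is the main obstacle, since an arbitrary $\phi\in\contoc(V\times S^{1})$ is not the contact lift of a Hamiltonian and may be ``strictly larger'' than any such lift --- this is precisely what underlies the squeezing phenomenon of Eliashberg, Kim and Polterovich. Following Sandon, I would work with the $\Z\frac{\partial}{\partial z}$-equivariant lift $\widetilde{\phi}$ of $\phi$ and with a generating function $F_{\phi}:B\times S^{1}\times\R^{N}\to\R$ of the associated Legendrian $L_{\phi}$. The critical points of $F_{\phi}$ relevant to $c(\beta\otimes\ud z,F_{\phi})$ correspond to translated points of $\widetilde{\phi}$, which occur in $\Z$-families under the Reeb flow with actions differing by integers; the goal is to trap $c(\beta\otimes\ud z,F_{\phi})$ between quantities attached to Hamiltonians supported in $V$, combining the reduction inequality of Proposition~\ref{prop:redgf}(\ref{item:ineqred}) applied to the $S^{1}$-factor of the base $B\times S^{1}$ with the duality formula of Proposition~\ref{prop:minmax}(\ref{item:duality}). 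Carrying this out, and --- above all --- bookkeeping the integer Reeb shifts so that the two ceilings match, yields $\lceil c(\beta\otimes\ud z,\phi)\rceil\leq\lceil c(\beta,V)\rceil$, and a final supremum over $\phi$ finishes the proof. I expect this matching of ceilings to be the delicate point: the assertion that the integer ambiguity inherent in the $\Z$-equivariant picture is exactly absorbed by $\lceil\,\cdot\,\rceil$ on both sides, for which I would quote the corresponding lemmas of \cite{San}.
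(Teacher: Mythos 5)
Items~(\ref{item:ccUinv}) and (\ref{item:ccUmon}), and the inequality $c(\beta\otimes\ud z,W\times S^{1})\geq\lceil c(\beta,W)\rceil$ in item~(\ref{item:ccUcomp}), are handled exactly as in the paper: conjugation-invariance via Proposition~\ref{prop:ccont}~(\ref{item:ccontinv}), monotonicity from the definition as a supremum, and the lower bound by restricting the supremum to contact lifts $\widehat{\psi}$ of $\psi\in\hamc(W)$ and invoking Proposition~\ref{prop:ccont}~(\ref{item:ccontcomp}). Your observation that $\lceil\,\cdot\,\rceil$ commutes with suprema (being nondecreasing and left-continuous) is correct and makes the ceiling bookkeeping in this half clean.

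The gap is in the reverse inequality of item~(\ref{item:ccUcomp}), which you correctly single out as the substantive point but for which the mechanism you propose is not the one that works. Reducing $F_{\phi}$ at a point of the $S^{1}$-factor of the base via Proposition~\ref{prop:redgf}~(\ref{item:ineqred}), combined with duality, does not produce a generating function of any Hamiltonian diffeomorphism supported in $W$: a general $\phi\in\contoc(W\times S^{1})$ genuinely moves the $z$-coordinate, and its fixed/translated points do not sit over a slice $\{z=z_0\}$, so there is nothing "attached to Hamiltonians supported in $W$" to trap $c(\beta\otimes\ud z,F_\phi)$ between. The idea that actually closes this half (Sandon's Proposition 3.20, which the paper quotes) is a domination argument: for every $\phi\in\contoc(W\times S^{1})$ one can find $\psi\in\hamc(W)$ with $\phi\leq\widehat{\psi}$ in Sandon's partial order on $\contoc$ (e.g.\ by choosing a Hamiltonian supported in $W$ and very positive on the region where $\phi$ is supported), together with the monotonicity of $\lceil c(\alpha,\cdot)\rceil$ with respect to that order; this gives $\lceil c(\beta\otimes\ud z,\phi)\rceil\leq\lceil c(\beta\otimes\ud z,\widehat{\psi})\rceil=\lceil c(\beta,\psi)\rceil\leq\lceil c(\beta,W)\rceil$ directly, with no reduction or duality needed. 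Deferring to "the corresponding lemmas of \cite{San}" without identifying this step leaves the essential content of the proposition unproved; if you do cite Sandon here, cite the domination lemma and the order-monotonicity of the spectral numbers specifically.
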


\begin{proof}
Point (\ref{item:ccUinv}) is a direct consequence of Proposition~\ref{prop:ccont} (\ref{item:ccontinv}).
Point (\ref{item:ccUmon}) is a consequence of the definition as a supremum.
Point (\ref{item:ccUcomp}) follows from the proof of \cite[Prop. 3.20]{San}. Indeed,
inequality $c(\beta\otimes\ud z,W\times S^{1}) \geq \lceil c(\beta, W)\rceil$
is due to Proposition~\ref{prop:ccont} (\ref{item:ccontcomp}) whereas the
other one is due to the fact that, for all $\phi\in\contoc (W\times S^1)$,
one can find $\psi\in\hamc(W)$ such that $\phi\leq \widehat{\psi}$
in Sandon's notations (see her proof for more details).
\end{proof}

\section{Some properties of symplectic and contact invariants}\label{se:prop}

\subsection{Estimation of $\gamma \left(T^{*}C\times B_{R}^{2n-2}\times S^{1}\right)$}

Here, we will prove the following

\begin{lem}\label{lemma:contgamma}
Let $R>0$ be such that $\pi R^2\not\in\Z$, $b>0$, $n>1$ and $C:=\R/d\Z$. Then
\begin{equation*}
\gamma \left(T^{*}C\times B_{R}^{2n-2}\times S^{1}\right) \leq
\left\lceil \pi R^2 \right\rceil.
\end{equation*}
\end{lem}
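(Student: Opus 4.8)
The plan is to exhibit an explicit compactly supported contactomorphism $\phi\in\contoc(T^{*}(\R\times\T^{k})\times S^{1})$ — with the relevant base being $C=\R/d\Z$, so $k=1$ and the fiber factor trivial in that direction — such that $\phi$ displaces $U:=T^{*}C\times B_{R}^{2n-2}\times S^{1}$ off itself, i.e.\ $\phi(U)\cap U=\emptyset$, and for which one can control $\lceil c(\mu,\phi)\rceil$ and $\lceil c(\mu,\phi^{-1})\rceil$. By the definition of $\gamma(U)$ as an infimum over displacing contactomorphisms, producing one such $\phi$ with $\lceil c(\mu,\phi)\rceil+\lceil c(\mu,\phi^{-1})\rceil\leq\lceil\pi R^{2}\rceil$ suffices. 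The natural candidate comes from the Hamiltonian side: take a compactly supported Hamiltonian $\psi\in\hamc(T^{*}(\R\times\T^{k}))$ that displaces a slightly shrunk copy of $T^{*}C\times B_{R}^{2n-2}$, translating it in the cotangent direction dual to the $C$-coordinate (this is possible precisely because $C$ is a circle, so bounded subsets of $T^{*}C$ in the fiber are displaceable by a bounded translation), and then lift it to $\widehat{\psi}\in\contoc(T^{*}(\R\times\T^{k})\times S^{1})$. The contact lift displaces $U$ because displacement is detected on the $T^{*}(\R\times\T^{k})$-factor and the $S^{1}$-factor is untouched.

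The key steps, in order, are: (1) reduce $\gamma(U)$ to the infimum over $\phi$ of the stated form by unwinding the definition and recalling that displacing $U$ is equivalent to displacing its projection to $T^{*}C\times B_{R}^{2n-2}$; (2) via Proposition~\ref{prop:ccont}(\ref{item:ccontcomp}), identify $c(\mu,\widehat{\psi})$ and $c(\mu,\widehat{\psi}^{-1})$ with the symplectic invariants $c(\mu_{B},\psi)$, $c(\mu_{B},\psi^{-1})$ up to the ceiling (note that $\widehat{\psi}^{-1}=\widehat{\psi^{-1}}$), so that $\lceil c(\mu,\widehat\psi)\rceil+\lceil c(\mu,\widehat\psi^{-1})\rceil = \lceil c(\mu_{B},\psi)\rceil+\lceil c(\mu_{B},\psi^{-1})\rceil$; (3) estimate the symplectic side: use Proposition~\ref{prop:cham}(\ref{item:chamid}), which gives $c(\mu_{B},\psi^{-1})=-c(1,\psi)$, so the sum equals $\lceil c(\mu_{B},\psi)\rceil+\lceil -c(1,\psi)\rceil$, and bound this by the symplectic $\gamma$-type quantity of $T^{*}C\times B_{R}^{2n-2}$; (4) compute that symplectic quantity — here one expects, by the reduction inequality Proposition~\ref{prop:cU}(\ref{item:cUred}) together with the ball computation Proposition~\ref{prop:cU}(\ref{item:cUB}), that a bounded neighborhood of $T^{*}C\times B_{R}^{2n-2}$ can be displaced with symplectic cost arbitrarily close to $\pi R^{2}$, since the $T^{*}C$ factor contributes nothing to the symplectic width in the displacing direction once one translates in $T^{*}C$; (5) take ceilings: since $\pi R^{2}\notin\Z$, $\lceil\pi R^{2}+\varepsilon\rceil=\lceil\pi R^{2}\rceil$ for small $\varepsilon>0$, absorbing the loss from shrinking $R$ and from passing to bounded open subsets, which yields the bound $\lceil\pi R^{2}\rceil$.

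The main obstacle I expect is step (4): precisely organizing the symplectic displacement energy estimate for $T^{*}C\times B_{R}^{2n-2}$ so that the cost is $\leq\pi R^{2}+\varepsilon$ rather than something larger. One must build the displacing Hamiltonian as (essentially) a sum of a generator of a fiberwise translation on $T^{*}C$ — which is autonomous, negative after normalization, and whose only relevant critical action contributions come from fixed points, hence controllable — and a cutoff function supported near $B_{R}^{2n-2}$, then apply the subadditivity Proposition~\ref{prop:minmax}(\ref{item:additivity}) and the monotonicity/normalization facts in Proposition~\ref{prop:cham}. Getting the bookkeeping of the two ceilings right (one for $\phi$, one for $\phi^{-1}$) so that their sum collapses to a single $\lceil\pi R^{2}\rceil$ rather than $2\lceil\pi R^{2}\rceil$ or $\lceil\pi R^{2}\rceil+1$ is the delicate point, and this is exactly where the hypothesis $\pi R^{2}\notin\Z$ and the asymmetry $c(\mu_{B},\psi)\geq0\geq c(1,\psi)$ from Proposition~\ref{prop:cham}(\ref{item:chamid}) must be used: one wants the $\phi^{-1}$ contribution to be $\leq 0$, hence contribute a nonpositive ceiling, while the $\phi$ contribution carries the $\lceil\pi R^{2}\rceil$. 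I would also need to double check that the displacing contactomorphism can genuinely be taken compactly supported and isotopic to the identity in $\contoc$, which follows from the Hamiltonian construction but deserves an explicit remark.
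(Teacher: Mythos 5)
There is a genuine gap, and it sits exactly where you flagged the ``main obstacle'': the construction of the displacing map in step (4). Your proposed mechanism is to displace $T^{*}C\times B_{R}^{2n-2}$ by translating in the fiber direction of $T^{*}C$. This cannot work. First, a fiber translation of $T^{*}C=T^{*}(\R/d\Z)$ is not a compactly supported Hamiltonian diffeomorphism (it is not even Hamiltonian: it changes the integral of $p\,\ud q$ over the zero section), and $\gamma$ is an infimum over $\phi\in\contoc$, so such a map is not admissible and carries no action bound. Second, and more fundamentally, any bounded truncation $C\times(-a,a)$ contains the zero section of $T^{*}C$, which is non-displaceable by compactly supported Hamiltonian maps; and even ignoring that, its displacement energy grows with $a$ and with $d$, not with $R$, so no bound of the form $\pi R^{2}+\varepsilon$ can come out of the $T^{*}C$ factor. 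This is precisely why the lemma fails for $n=1$ (where the $B_{R}^{2n-2}$ factor is a point) and why the hypothesis $n>1$ is needed — a requirement your route never uses, which is a symptom of the problem.

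The missing idea is that the displacement must take place entirely in the $B_{R}^{2n-2}$ factor, with the $T^{*}C$ factor merely carried along via a cutoff $\chi(p_{1})$ with small derivative (so that no new fixed points of nonzero action are created). The paper uses the radial Hamiltonian $H(x)=-h(|x|^{2})$ on $\R^{2n-2}$ whose time-one map rotates the sphere $|x|=r$ by the angle $\theta(r)$ with $\cos(\theta(r)/2)=r/(2R)$; an elementary plane-geometry lemma shows such a rotation sends every point of $B_{R}^{2n-2}$ out of the ball, and the computation $\tfrac12\int_{0}^{4R^{2}}\theta(\sqrt{v})\,\ud v=\pi R^{2}$ shows the only fixed point of nonzero action (the origin) has action $\pi R^{2}+\varepsilon$, giving $c(\mu,\psi^{\varepsilon})\le\pi R^{2}+\varepsilon$ while $H\le 0$ forces $c(1,\psi^{\varepsilon})=0$. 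Your steps (2), (3) and (5) — lifting via Proposition~\ref{prop:ccont}~(\ref{item:ccontcomp}), killing the $\phi^{-1}$ ceiling via $c(1,\phi)=0$ and Proposition~\ref{prop:ccont}~(\ref{item:ccontdual}), and using $\pi R^{2}\notin\Z$ to absorb $\varepsilon$ — match the paper and are fine, but without the rotation construction the quantitative heart of the lemma is not established.
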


\begin{rem}
This Lemma fails for $n=1$. The use of Lemma \ref{lemma:contgamma} will be the step where
we will need the assumption that $2n+1>3$ in the proof of Theorem~\ref{thm:ccamel}.
\end{rem}

In order to prove Lemma~\ref{lemma:contgamma}, we will need the following elementary fact:

\begin{figure}
\begin{small}
\def\svgwidth{0.3\textwidth}
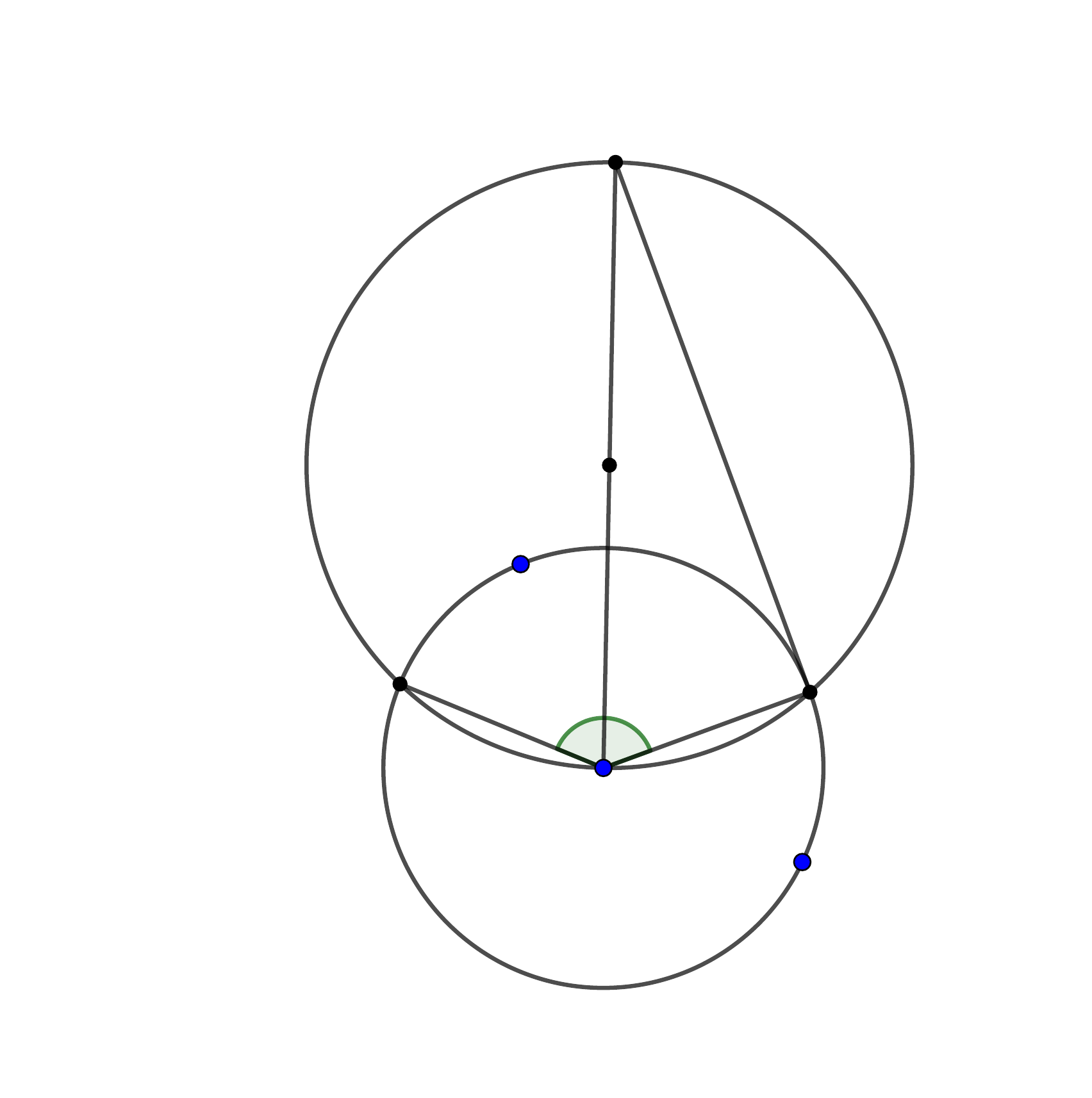
\end{small}
\caption{Configuration in the plane $P$}
\label{fig:elementary}
\end{figure}

\begin{lem}
Let $x_0\in\partial B_{R}^{2n-2}$, $x\in B_{R}^{2n-2}$ and $r:=|x-x_0|$.
We set $\theta(r)\in [0,\pi]$ to be such that
\begin{equation*}
\cos\left(\frac{\theta(r)}{2}\right) = \frac{r}{2R}
\end{equation*}
Then any rotation $\rho:\R^{2n-2}\rightarrow\R^{2n-2}$ of angle $\theta(r)$ centered at $x_0$
sends $x$ outside $B_{R}^{2n-2}$, \emph{i.e.}
$\rho(x)\not\in B_{R}^{2n-2}$.
\end{lem}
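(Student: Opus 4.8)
The plan is to reduce the statement to a two-dimensional computation and then to read the conclusion off the defining relation $\cos(\theta(r)/2)=r/(2R)$. Recall that ``a rotation of angle $\theta(r)$ centered at $x_0$'' means an isometry $\rho$ with $\rho(x_0)=x_0$ for which the angle at $x_0$ between $x-x_0$ and $\rho(x)-x_0$ equals $\theta(r)$; so the only features of $\rho$ that I use are $|\rho(x)-x_0|=|x-x_0|=r$ and $\langle \rho(x)-x_0,\,x-x_0\rangle=r^2\cos\theta(r)$.

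Write $c$ for the center of $B_R^{2n-2}$, so $|x_0-c|=R$, set $u:=x-x_0$, and let $v$ range over $S:=\{v\in\R^{2n-2}: |v|=r,\ \langle v,u\rangle=r^2\cos\theta(r)\}$, which contains $\rho(x)-x_0$. Since
\begin{equation*}
|\rho(x)-c|^2 = |x_0-c|^2 + 2\langle x_0-c,\rho(x)-x_0\rangle + |\rho(x)-x_0|^2 = R^2 + r^2 + 2\langle x_0-c,\rho(x)-x_0\rangle,
\end{equation*}
it suffices to show $\langle x_0-c,v\rangle\geq -r^2/2$ for all $v\in S$, i.e. that $\min_{v\in S}\langle x_0-c,v\rangle\geq -r^2/2$. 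Here is the key reduction: a Lagrange-multiplier argument shows that this minimum is attained at a $v^\ast$ for which $x_0-c\in\mathrm{span}(v^\ast,u)$, hence $v^\ast\in\mathrm{span}(x_0-c,u)=\mathrm{span}(x_0-c,x-c)$, unless $v^\ast=\pm u$; but $v^\ast=\pm u$ forces $\cos\theta(r)=\pm1$, hence $x=x_0$, and then $\rho$ fixes $x=x_0\in\partial B_R^{2n-2}$ and there is nothing to prove. Translating $c$ to the origin, I may therefore work entirely in the $2$-plane $\Pi:=\mathrm{span}(x_0,x)$, which contains $c$, $x_0$, $x$ and $x_0+v^\ast$; the degenerate case in which $c$, $x_0$, $x$ are collinear (so $\Pi$ is a line) I would handle by a direct computation, noting that then $\langle x_0-c,v\rangle$ is already constant over $S$.

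In $\Pi$, set $x_0=(R,0)$; a point $(R+r\cos\psi,r\sin\psi)$ of the circle $\partial B_r^{2n-2}(x_0)\cap\Pi$ has square-norm $R^2+r^2+2Rr\cos\psi$, so it lies inside $B_R^{2n-2}$ exactly when $\cos\psi<-r/(2R)$, that is exactly on the open arc $\psi\in(\pi-\theta(r)/2,\pi+\theta(r)/2)$ — of angular width precisely $\theta(r)$ and centered at the point of $\partial B_r^{2n-2}(x_0)$ closest to $c$ — because $-r/(2R)=\cos(\pi-\theta(r)/2)$. (Equivalently, the isoceles triangle with vertices $c$, $x_0$, and an intersection point of the two circles has angle $\theta(r)/2$ at $x_0$, since $R^2=R^2+r^2-2Rr\cos(\theta(r)/2)$.) Now $x$ lies strictly inside $B_R^{2n-2}$, so its angular position on that circle is strictly interior to the arc; and $v^\ast\in\Pi$ is at angle $\theta(r)$ from $u$, so $x_0+v^\ast$ is obtained from $x$ by one of the two rotations by $\pm\theta(r)$ about $x_0$ in $\Pi$. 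Since $\theta(r)\in(0,\pi]$ the complementary arc has width $2\pi-\theta(r)\geq\theta(r)$, so turning $x$ by $\theta(r)$ in either direction lands it strictly inside the complementary arc, hence strictly outside $B_R^{2n-2}$. Therefore $\langle x_0-c,v^\ast\rangle>-r^2/2$, whence $\langle x_0-c,v\rangle>-r^2/2$ for all $v\in S$, and in particular $\rho(x)\notin B_R^{2n-2}$. The steps I expect to be the main obstacles are the reduction to the plane $\Pi$ (especially dispatching the collinear degenerate configurations) and the elementary but error-prone angle bookkeeping at the end — the fact that a rotation by an angle $\theta(r)\leq\pi$ cannot carry a point out of an arc of angular width $\theta(r)$ and back into it. As an alternative one may bypass the reduction entirely and compute $\min_{v\in S}\langle x_0-c,v\rangle$ in closed form in terms of $|x-c|$, $R$ and $r$, upon which the inequality $\min_{v\in S}\langle x_0-c,v\rangle\geq-r^2/2$ becomes, after squaring, an immediate consequence of $|x-c|<R$ and $r<2R$.
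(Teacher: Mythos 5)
Your argument is correct, and it ultimately rests on the same planar fact as the paper's proof: the portion of the sphere $\partial B_{r}^{2n-2}(x_0)$ lying inside the ball subtends, in a suitable $2$-plane, an open arc of angular width exactly $\theta(r)$ at $x_0$, precisely because $\cos(\theta(r)/2)=r/(2R)$, so a turn by $\theta(r)\leq\pi$ must exit the arc. The difference is in how you reach that plane. The paper works in the plane through $x_0$, $x$ and $\rho(x)$, which contains the rotated point by construction; the price is that the slice of the ball is then a disk of radius possibly smaller than $R$, so an auxiliary enveloping disk $D$ of radius exactly $R$ with $x_0\in\partial D$ is introduced, after which the inscribed-right-angle computation gives the arc width. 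You instead work in the plane through the center $c$, $x_0$ and $x$, where the slice has radius exactly $R$, and you pay for this with the Lagrange-multiplier reduction showing that the extremal position of $\rho(x)-x_0$ on $S$ lies in that plane, plus the attendant degenerate cases. Those cases do check out: in the collinear case one has $\mu=-R/r$ (the other sign contradicts $|x-c|<R$), so $\langle x_0-c,v\rangle=-Rr\cos\theta(r)$ on all of $S$, and with $s:=r/(2R)\in(0,1)$ the required inequality becomes $2s^{2}-1\leq s$, i.e.\ $(2s+1)(s-1)\leq 0$; and $v^{\ast}=\pm u$ cannot occur since $0<r<2R$ forces $\theta(r)\in(0,\pi)$. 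Your route is longer but makes the quantitative criterion $\langle x_0-c,\rho(x)-x_0\rangle\geq -r^{2}/2$ explicit and works with the ball itself rather than an auxiliary disk; the paper's enveloping-disk trick is shorter and avoids the case analysis. Finally, you are right to spell out the reading of ``rotation of angle $\theta(r)$'' as an isometry fixing $x_0$ with $\widehat{x\,x_{0}\,\rho(x)}=\theta(r)$: this is the interpretation under which the statement is true and the one used in the application (multiplication by $e^{i\theta(r)}$ on $\C^{n-1}$, which turns every vector by exactly that angle), and the paper's proof uses it tacitly when it treats $\rho(x)$ as the planar rotate of $x$ in the plane it chooses.
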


\begin{proof}
Let $x_0\in\partial B_{R}^{2n-2}$, $x\in B_{R}^{2n-2}$ and $r:=|x-x_0|$.
Take any rotation $\rho:\R^{2n-2}\rightarrow\R^{2n-2}$ of angle $\theta(r)$, where $\theta(r)$ is
defined as above.
Let $P\subset\R^{2n-2}$ be the affine plane spanned by $x_0$, $x$ and $\rho(x)$.
The round disk $B_{R}^{2n-2}\cap P$ has a radius smaller than $R$ and lies in an open round disk $D$ of radius $R$
with $x_0\in\partial D$ centered at $c\in P$. Therefore, it is enough to show that $\rho(x)\not\in D$.
Let $a,a'$ be the two points of $\partial D\cap \partial B_{r}^{2n-2}(x_0)$, $b$
be the second point of $\partial D\cap (x_0 c)$ and $\alpha$ be the unoriented angle $\widehat{ax_0 a'}\in [0,\pi]$
(see Figure~\ref{fig:elementary}).
As $[x_0 b]$ is a diameter of $\partial D$, the triangle $abx_0$ is right at $a$,
thus $\frac{\alpha}{2}=\widehat{ax_0 b}$ satisfies
\begin{equation*}
\cos \left(\frac{\alpha}{2}\right)= \frac{ax_0}{bx_0} = \frac{r}{2R}. 
\end{equation*}
Hence, $\alpha=\theta(r)$ and $\rho(x)\not\in D$.
\end{proof}

\begin{proof}[Proof of Lemma~\ref{lemma:contgamma}]
\begin{figure}
\begin{small}
\def\svgwidth{0.7\textwidth}
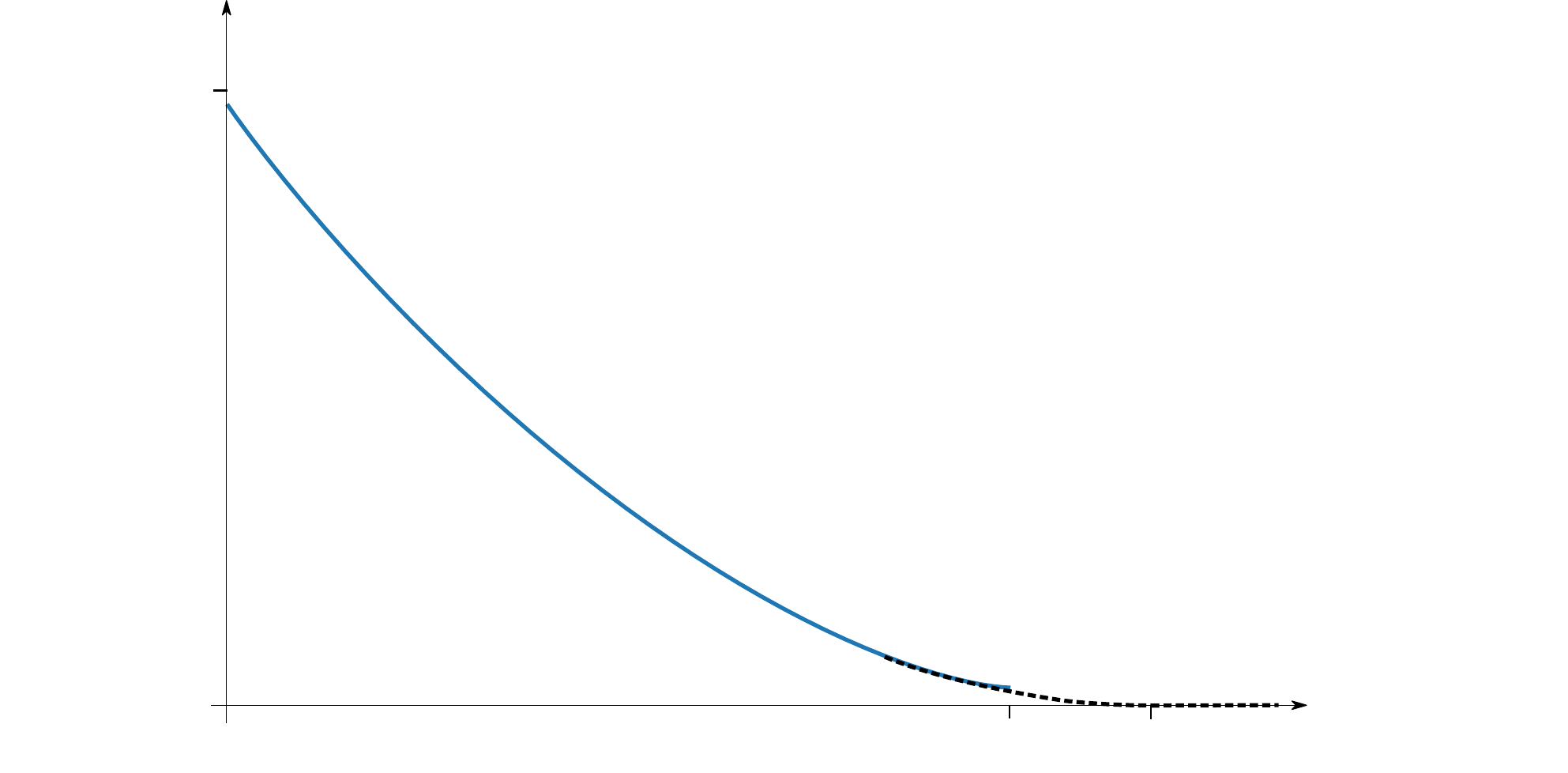
\end{small}
\caption{Approximating $h+\frac{\varepsilon}{2}$ by a smooth compactly supported $h_\varepsilon$.}
\label{fig:function}
\end{figure}

We exhibit a family of $\psi^{\varepsilon}\in\hamc(\R^{2n-2})$ satisfying
\begin{itemize}
\item $\psi^{\varepsilon}\left(B_{R}^{2n-2}\right)\cap B_{R}^{2n-2} = \emptyset$,
\item $c(1,\psi^{\varepsilon})=0$,
\item $\forall\varepsilon >0$, $c(\mu,\psi^{\varepsilon})\leq \pi R^2 + \varepsilon$,
\end{itemize}
where $\mu$ is the orientation class of the compactified space $\mathds{S}^{2n-2}$.
Consider the radial Hamiltonian $H(x)=-h(r^{2})$ where $h:[0,+\infty)\rightarrow \R$ is defined by:
\begin{equation*}
h(u) =\frac{1}{2}\int_{0}^{4R^{2}} \theta(\sqrt{v})\ud v
-\frac{1}{2}\int_{0}^{\min(u,4R^{2})} \theta(\sqrt{v})\ud v
\end{equation*}
If $\psi$ designates the time-1-flow associated to $H$, for all $r\in [0,R]$ and any $x\in B_{R}^{2n-2}$
such that $|x|=r$, $\psi(x)$ is the image of $x$ by some 0-centered rotation of angle
$-2h'(r^{2}) = \theta(r)$. Thus $\psi(x)\not\in B_{R}^{2n-2}$ and $\psi(B_{R}^{2n-2})\cap B_{R}^{2n-2}=\emptyset$.
Nevertheless, $\psi$ is not well defined as $H$ is not smooth in the neighborhood of $x=0$ and $|x|=2R$.
For every small $\varepsilon>0$,
we then construct a family of smooth $h_{\varepsilon}:[0,+\infty)\rightarrow\R$ approximating $h$ in the following way
(see Figure~\ref{fig:function}):
there exists $\delta=\delta(\varepsilon)\in (0,2R^2)$ such that
\begin{itemize}
\item $h_{\varepsilon}$ is compactly supported on $[0,4R^2 + \delta]$,
\item $h'(u) \leq h_{\varepsilon}'(u) \leq \frac{\pi}{2}$ for all $u\in [0,+\infty)$,
\item $h_{\varepsilon}(u) = \pi R^{2}+\varepsilon - \frac{\pi}{2}u$ for all $u\in\left[0,\frac{\delta}{2}\right]$,
\item $h_{\varepsilon}(u)=h(u)+\frac{\varepsilon}{2}$ for all $u\in[\delta,4R^2-\delta]$.
\end{itemize}
Hamiltonians $H^{\varepsilon}(x):=-h_{\varepsilon}(|x|^2)$ are smooth functions so
their time-1-flow $\psi^{\varepsilon}$ are well defined.
As $H^{\varepsilon}\leq 0$, $c(1,\psi^{\varepsilon}) = 0$.
The only fixed point with non-zero action is $0$ so
$c(\mu,\psi) \leq -H^{\varepsilon}(0) = -H(0)+\varepsilon$ ($0$ has action $-H^{\varepsilon}(0)$) and
\begin{equation*}
 -H(0) = h(0) =
\frac{1}{2}\int_{0}^{4R^{2}} \theta(\sqrt{v})\ud v+\varepsilon.
\end{equation*}
Changing the variable $x=\frac{\sqrt{v}}{2R}$ and writing $\theta(s) = 2\arccos \left(\frac{s}{2R}\right)$,
\begin{equation*}
\frac{1}{2}\int_{0}^{4R^{2}} \theta(\sqrt{v})\ud v = 8R^{2}\int_{0}^{1}x\arccos(x)\ud x
\end{equation*}
Then, an integration by parts and writing $x=\sin\alpha$ give
\begin{equation*}
\int_{0}^{1}x\arccos(x)\ud x =
\frac{1}{2}\int_{0}^{1}\frac{x^{2}}{\sqrt{1-x^{2}}}\ud x =
\frac{1}{2}\int_{0}^{\frac{\pi}{2}}\sin^{2} \alpha\ud\alpha = \frac{\pi}{8},
\end{equation*}
thus $c(\mu,\psi^{\varepsilon})\leq \pi R^2 + \varepsilon$ as expected.

Now, from the family $\psi^{\varepsilon}$
we deduce a second, $\varphi^{\varepsilon}\in\hamc(T^{*}C\times\R^{2n-2})$, satisfying:
\begin{enumerate}
\item\label{item:cap} $\varphi^{\varepsilon}\left(C\times\left(-\frac{1}{\varepsilon},\frac{1}{\varepsilon}\right)
\times B_{R}^{2n-2}\right)
\cap \left(C\times\left(-\frac{1}{\varepsilon},\frac{1}{\varepsilon}\right)\times B_{R}^{2n-2}\right)
=\emptyset$,
\item\label{item:zero} $c(1,\varphi^{\varepsilon})=0$,
\item\label{item:cmu} $\forall\varepsilon >0$, $c(\mu,\varphi^{\varepsilon})\leq \pi R^2 + \varepsilon$,
\end{enumerate}
where $\mu$ is the orientation class of the compactified space $C\times\mathds{S}^1\times\mathds{S}^{2n-2}$.
Let $U^{\varepsilon}:=C\times\left(-\frac{1}{\varepsilon},\frac{1}{\varepsilon}\right)\times B_{R}^{2n-2}$ and
$\chi:\R\rightarrow [0,1]$ be a smooth compactly supported function with
$\chi|_{\left[-\frac{1}{\varepsilon},\frac{1}{\varepsilon}\right]}\equiv1$.
We then introduce the compactly supported negative Hamiltonians
$K^{\varepsilon}:C\times\R\times\R^{2n-2}\rightarrow\R$ defined by:
\begin{equation*}
K^{\varepsilon}(q_{1},p_{1},x) := \chi(p_{1})H^{\varepsilon}(x),\qquad
\forall (q_{1},p_{1},x)\in C\times\R\times\R^{2n-2},
\end{equation*}
so that $\varphi^\varepsilon(q_{1},p_{1},x) = (q_{1},p_{1},\psi^\varepsilon(x))$ for
$p_{1}\in \left(-\frac{1}{\varepsilon},\frac{1}{\varepsilon}\right)$, thus
$\varphi^\varepsilon(U^\varepsilon)\cap U^\varepsilon = \emptyset$ as wanted.
Moreover, since $K^{\varepsilon}$ is negative, $c(1,\varphi^{\varepsilon})=0$.
The function $\chi$ can be chosen so that it is even and decreasing inside $\supp\chi\cap (\frac{1}{\varepsilon},+\infty )$ with
an arbitrarily small derivative.
For $|p_{1}|>\frac{1}{\varepsilon}$ and $(q_1,p_1,x)\in\supp\varphi^{\varepsilon}$,
the $q_1$-coordinate of $\varphi^\varepsilon(q_{1},p_{1},x)$ is thus slightly different from $q_1$.
Thus, the only fixed points with a non-zero action are the $(q_1,p_1,0)$'s for $|p_{1}|\leq\frac{1}{\varepsilon}$.
The action is still given by $-K^{\varepsilon}(0)=h(0)+\varepsilon=\pi R^{2}+\varepsilon$,
thus $c(\mu,\varphi^{\varepsilon})\leq \pi R^{2}+\varepsilon$.

Take the contact lift of the previous family:
$\widehat{\varphi^{\varepsilon}}\in\contoc(T^{*}C\times\R^{2n-2}\times S^1)$.
Property (\ref{item:cap}) of $\varphi^{\varepsilon}$ implies
\begin{equation}\label{eq:cap}
\widehat{\varphi^{\varepsilon}}\left(C\times\left(-\frac{1}{\varepsilon},\frac{1}{\varepsilon}\right)
\times B_{R}^{2n-2}\times S^1\right)
\cap \left(C\times\left(-\frac{1}{\varepsilon},\frac{1}{\varepsilon}\right)\times B_{R}^{2n-2}\times S^1\right)
=\emptyset.
\end{equation}
On the one hand, Proposition~\ref{prop:ccont} (\ref{item:ccontcomp})
and property (\ref{item:zero}) of $\varphi^{\varepsilon}$ gives
\begin{equation*}
c\left(1,\widehat{\varphi^{\varepsilon}}\right) = c(1,\varphi^{\varepsilon}) = 0.
\end{equation*}
Thus, if $\ud z$ denotes the orientation class of $S^1$, Proposition~\ref{prop:ccont} (\ref{item:ccontdual}) gives
\begin{equation}\label{eq:zero}
c\left(\mu\otimes\ud z,\left(\widehat{\varphi^{\varepsilon}}\right)^{-1}\right) =0.
\end{equation}
On the other hand,
Proposition~\ref{prop:ccont} (\ref{item:ccontcomp}) and property (\ref{item:cmu}) of $\varphi^{\varepsilon}$
gives,
\begin{equation}\label{eq:cmu}
c\left(\mu\otimes\ud z,\widehat{\varphi^{\varepsilon}}\right) =
c(\mu,\varphi^{\varepsilon}) \leq
\pi R^2 + \varepsilon.
\end{equation}
Equations (\ref{eq:zero}) and (\ref{eq:cmu}) then imply
\begin{equation*}
\left\lceil c\Big(\mu\otimes\ud z,\widehat{\varphi^{\varepsilon}}\Big) \right\rceil +
\left\lceil c\left(\mu\otimes\ud z,\left(\widehat{\varphi^{\varepsilon}}\right)^{-1}\right) \right\rceil
\leq \left\lceil\pi R^2 + \varepsilon\right\rceil.
\end{equation*}
Thus, since $\widehat{\varphi^{\varepsilon}}$ verifies (\ref{eq:cap}),
\begin{equation*}
\gamma \left(U^\varepsilon\times S^{1}\right) 
\leq
\left\lceil \pi R^2  + \varepsilon\right\rceil.
\end{equation*}
Since $\pi R^{2}\not\in\Z$, $x\mapsto\lceil x\rceil$ is continuous at $\pi R^{2}$ and
any open bounded set $V\subset T^{*}C\times B_{R}^{2n-2}\times S^{1}$ is included in $U^\varepsilon\times S^{1}$
for a small $\varepsilon$, we conclude that
\begin{equation*}
\gamma \left(T^{*}C\times B_{R}^{2n-2}\times S^{1}\right) \leq
\left\lceil \pi R^2 \right\rceil.
\end{equation*}
\end{proof}

\subsection{Linear symplectic invariance} \label{se:sympinv}

A symplectomorphism $\varphi:T^{*}(\R^{n}\times\T^{k})\rightarrow T^{*}(\R^{n}\times\T^{k})$
will be called \emph{linear} when it can be lifted to a linear map
$\widetilde{\varphi}:\R^{2(n+k)}\rightarrow\R^{2(n+k)}$.
Throughout this subsection, we fix a linear symplectomorphism
$\varphi:T^{*}(\R^{n}\times\T^{k})\rightarrow T^{*}(\R^{n}\times\T^{k})$
of the form
\begin{equation*}
\varphi(q_1,q_2) = (\varphi_1(q_1),\varphi_2(q_2)),\qquad \forall (q_1,q_2)\in\R^{2n+k}\times\T^{k},
\end{equation*}
for some linear maps
$\varphi_{1}:\R^{2n+k}\rightarrow\R^{2n+k}$ and
$\varphi_{2}:\T^{k}\rightarrow\T^{k}$.
Let $B$ be either $\mathds{S}^{2n}\times\T^{k}$
or $\mathds{S}^{2n+k}$,
such that $B\times\T^{k}$ is a compactification of $T^{*}(\R^{n}\times\T^{k})$.
We denote by $\mu\in H^{*}(B)$ the orientation class of $B$.

\begin{prop} \label{prop:sympinv}
For any open subset $U\subset T^{*}(\R^{n}\times\T^{k})$ and for any non-zero
$\alpha\in H^{*}(\T^{k})$,
we have
\begin{equation*}
c(\mu\otimes\alpha, \varphi(U)) = c(\mu\otimes\varphi_{2}^{*}\alpha, U).
\end{equation*}
\end{prop}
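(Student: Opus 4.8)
The plan is to establish the invariance first at the level of generating functions and individual Hamiltonian diffeomorphisms, and then pass to the supremum defining $c(\mu\otimes\alpha,U)$. The key geometric point is that conjugation by a linear symplectomorphism $\varphi$ of the product form $\varphi=\varphi_1\times\varphi_2$ sends $\hamc(U)$ bijectively onto $\hamc(\varphi(U))$ via $\psi\mapsto\varphi\circ\psi\circ\varphi^{-1}$, so it suffices to compare $c(\mu\otimes\alpha,\varphi\circ\psi\circ\varphi^{-1})$ with $c(\mu\otimes\varphi_2^*\alpha,\psi)$ for each $\psi\in\hamc(U)$ and then take suprema; the desired equality for the invariants follows immediately from such an equality for diffeomorphisms. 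Note that Proposition~\ref{prop:cham}~(\ref{item:chaminv}) already gives that $s\mapsto c(\mu\otimes\alpha,\varphi^s\circ\psi\circ(\varphi^s)^{-1})$ is constant along a symplectic isotopy, but here $\varphi$ need not be isotopic to the identity inside the linear group preserving everything, and more importantly the cohomology class on the two sides is different ($\alpha$ versus $\varphi_2^*\alpha$), so that proposition does not apply directly.

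The heart of the matter is understanding how the generating function $F_{\varphi\circ\psi\circ\varphi^{-1}}$ is related to $F_\psi$. Recall from Subsection~\ref{se:gfhc} that $F_\psi$ is built from a generating function of the Lagrangian $\Gamma_\psi(T^*M)=\tau\circ\mathrm{gr}_\psi(T^*M)\subset T^*B$ (extended to the compactification), where $\tau$ is the specific linear symplectic map fixed in that subsection. Conjugating $\psi$ by $\varphi$ changes $\mathrm{gr}_\psi$ to $(\varphi\times\varphi)\circ\mathrm{gr}_\psi\circ\varphi^{-1}$, hence $\Gamma_{\varphi\psi\varphi^{-1}}=\tau\circ(\varphi\times\varphi)\circ\mathrm{gr}_\psi\circ(\tau')^{-1}$ where $\tau'=\tau\circ(\varphi\times\varphi)^{-1}$... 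I would instead observe the cleaner identity that the linear symplectomorphism $\Phi:=\tau\circ(\varphi\times\varphi)\circ\tau^{-1}$ of $T^*\R^{2n}$ (descended appropriately to $T^*(\R^{2n}\times\T^k\times\R^k)$) conjugates $\Gamma_\psi(T^*M)$ to $\Gamma_{\varphi\psi\varphi^{-1}}(T^*M)$. Using the product form $\varphi=\varphi_1\times\varphi_2$ and the explicit formula for $\tau$, one checks that $\Phi$ is itself of a product form acting linearly on the $\R^{2n}$-factor and on the $\T^k$-factor, and that on the $\T^k\times\R^k$-base directions it is (a lift of) $\varphi_2\times(\varphi_2^{-1})^{\!*}$ or similar — this is exactly where the specific choice of $\tau$ matters, as the paper warns. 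Consequently $F_{\varphi\psi\varphi^{-1}}= F_\psi\circ(\text{base diffeomorphism }\Phi_{\mathrm{base}})\circ(\text{fiber diffeo})$ up to the fiber-preserving-diffeomorphism-and-stabilization ambiguity of Theorem~\ref{thm:gfqi}, so that by Proposition~\ref{prop:minmax}~(\ref{item:inv}) and the naturality of the min-max construction under a base diffeomorphism $g$ — namely $c(\alpha,F\circ g)=c((g^*)^{-1}\alpha,F)$, or rather $c(g^*\alpha, F\circ g)=c(\alpha,F)$ — we get $c(\mu\otimes\alpha,\varphi\psi\varphi^{-1})=c(\Phi_{\mathrm{base}}^*(\mu\otimes\alpha),\psi)$, and $\Phi_{\mathrm{base}}^*$ acts as $\mu\mapsto\pm\mu$ on the $\mathds{S}^{2n}$ orientation class (linear symplectic maps are orientation-preserving, so in fact $+\mu$) and as $\varphi_2^*$ on $H^*(\T^k)$.

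I expect the main obstacle to be the bookkeeping in this last step: one must make precise how the min-max critical value transforms under a base diffeomorphism that is not isotopic to the identity, verify that the normalization of the generating functions (critical value $0$ on the part at infinity, as fixed in Subsection~\ref{se:gfhc}) is preserved by the conjugation so that Proposition~\ref{prop:minmax}~(\ref{item:inv}) genuinely applies, and confirm that $\Phi$ really does respect the product structure $B\times\T^k$ and acts by $\varphi_2$ on the last $\T^k$-factor whose cohomology carries $\alpha$. Once $c(\mu\otimes\alpha,\varphi\psi\varphi^{-1})=c(\mu\otimes\varphi_2^*\alpha,\psi)$ is established for every compactly supported $\psi$, taking the supremum over $\psi\in\hamc(\varphi(U))$ on the left — equivalently over $\psi\in\hamc(U)$ after the conjugation, which is a bijection preserving the support condition — and extending from bounded to unbounded $U$ as in the definition, yields $c(\mu\otimes\alpha,\varphi(U))=c(\mu\otimes\varphi_2^*\alpha,U)$, as claimed. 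A subtlety worth flagging is that when $B=\mathds{S}^{2n+k}$ the decomposition of the orientation class interacts with the $\T^k$-factor differently than when $B=\mathds{S}^{2n}\times\T^k$; I would handle both cases uniformly by noting that in either case $\mu\otimes\alpha$ lives in $H^*(B\times\T^k)$ and $\Phi_{\mathrm{base}}^*$ fixes $\mu$ and sends $\alpha\in H^*(\T^k)$ to $\varphi_2^*\alpha$, which is all that is needed.
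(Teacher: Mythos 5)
Your proposal follows essentially the same route as the paper: it reduces to the conjugation identity $c(\mu\otimes\alpha,\varphi\circ\psi\circ\varphi^{-1})=c(\mu\otimes\varphi_{2}^{*}\alpha,\psi)$ for individual Hamiltonian diffeomorphisms and then takes suprema, proving that identity by realizing the generating function of the conjugate as $F_{\psi}$ precomposed with a base diffeomorphism induced by $\varphi_1\times\varphi_2$ (this is where the specific choice of $\tau$ enters, exactly as you note, since $\tau\circ(\varphi\times\varphi)\circ\tau^{-1}$ is the cotangent lift of a base map precisely because $\varphi$ is symplectic) and invoking naturality of the min-max value under base diffeomorphisms, with $\mu\mapsto\pm\mu$ and $\alpha\mapsto\varphi_2^{*}\alpha$. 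The one technical point you leave implicit but the paper spells out is that the linear map $\varphi_1$ must first be replaced by a diffeomorphism $\varphi_1'$ that coincides with $\varphi_1$ on a ball containing the relevant supports and is standard near infinity, so that it genuinely extends to the compactification $B$ and preserves the normalization of the generating function at infinity.
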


Proposition~\ref{prop:sympinv} is a consequence of the following statement:

\begin{lem} \label{prop:hamconj}
For any $\psi\in\hamc(T^{*}(\R^{n}\times\T^{k}))$ and for any non-zero
$\alpha\in H^{*}(\T^{k})$,
\begin{equation*}
c(\mu\otimes\alpha, \psi) = c(\mu\otimes\varphi_{2}^{*}\alpha, \varphi^{-1}\circ\psi\circ\varphi).
\end{equation*}
\end{lem}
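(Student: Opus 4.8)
The plan is to prove Lemma~\ref{prop:hamconj} by reducing the conjugation-invariance of the min-max critical value to the invariance statement in Proposition~\ref{prop:cham}~(\ref{item:chaminv}), the only subtlety being that conjugation by $\varphi$ does not preserve the cohomology class we test against, because $\varphi_2$ acts nontrivially on $H^*(\T^k)$. First I would observe that $\varphi^{-1}\circ\psi\circ\varphi$ is again a compactly supported Hamiltonian diffeomorphism of $T^*(\R^n\times\T^k)$, and that the family $s\mapsto \varphi_s^{-1}\circ\psi\circ\varphi_s$ connecting $\psi$ (at $s=0$, say after writing $\varphi$ as the time-$1$ map of a linear symplectic isotopy $\varphi_s$) to $\varphi^{-1}\circ\psi\circ\varphi$ is a symplectic isotopy in the sense of Proposition~\ref{prop:cham}~(\ref{item:chaminv}); however that proposition keeps the cohomology class fixed, so it only gives $c(\beta,\psi)=c(\beta,\varphi^{-1}\circ\psi\circ\varphi)$ for the \emph{same} $\beta$, which is not what we want. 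So the real content is to track how the generating function $F_\psi$ and the identification $T$ of \eqref{eq:kun} transform under conjugation by the specific linear map $\varphi$.

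The key steps, in order, are as follows. (1) Using the construction of $\Gamma_\psi$ and $F_\psi$ from Subsection~\ref{se:gfhc}, show that conjugating $\psi$ by $\varphi$ conjugates the Lagrangian $L_\psi\subset T^*B'$ (where $B'=B\times\T^k$) by a linear symplectomorphism of $T^*B'$ which is itself the cotangent lift of a linear diffeomorphism $\Phi$ of the base $B'$; this is precisely the place where the specific choice of $\tau$ in Subsection~\ref{se:gfhc} is needed, since only for that $\tau$ does $\Gamma_{\varphi^{-1}\psi\varphi}$ relate to $\Gamma_\psi$ by a base-linear map — this is exactly the remark made right after the definition of $\tau$. (2) Deduce that $F_{\varphi^{-1}\psi\varphi} = F_\psi\circ(\Phi\times\id_{\R^N})$ up to fiber-preserving diffeomorphism and stabilization, using the uniqueness in Theorem~\ref{thm:gfqi}. (3) Since pulling back a generating function by a base diffeomorphism $\Phi$ pulls back sublevel sets and the K\"unneth identification $T$ correspondingly, conclude $c(\gamma,F_{\varphi^{-1}\psi\varphi}) = c(\Phi^*\gamma, F_\psi)$ for every non-zero $\gamma\in H^*(B')$. (4) Finally compute $\Phi^*$ on the class $\mu\otimes\alpha$: because $\varphi=(\varphi_1,\varphi_2)$ splits and $\varphi_1$ acts on the $\mathds{S}^{2n}$ (or $\mathds{S}^{2n+k}$) factor — hence fixes the top class $\mu$ up to the sign of its determinant, which is $+1$ as $\varphi_1$ is symplectic — while $\varphi_2$ acts on the $\T^k$ factor by $\varphi_2^*$, we get $\Phi^*(\mu\otimes\alpha) = \mu\otimes\varphi_2^*\alpha$. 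Combining with step (3) gives $c(\mu\otimes\alpha,\varphi^{-1}\psi\varphi) = c(\mu\otimes\varphi_2^*\alpha,\psi)$, which upon renaming $\psi \leftrightarrow \varphi^{-1}\psi\varphi$ (or using that $\varphi_2$ is invertible) yields the asserted identity $c(\mu\otimes\alpha,\psi)=c(\mu\otimes\varphi_2^*\alpha,\varphi^{-1}\psi\varphi)$.

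The main obstacle I expect is step (1): checking that conjugation by $\varphi$ really does correspond, after applying $\widehat{\tau}$ / $\tau$ and the compactification, to a \emph{base-linear} symplectomorphism of $T^*B'$, rather than a general linear symplectomorphism of $T^*B'$ that mixes base and fiber. The linear map $\tau(q,p;Q,P)=\bigl(\tfrac{q+Q}{2},\tfrac{p+P}{2},P-p,q-Q\bigr)$ is engineered so that conjugating $\mathrm{gr}_\psi$ by $\varphi\times\varphi$ — which is what conjugating $\psi$ by $\varphi$ produces on the graph — becomes, after applying $\tau$, conjugation by a map of the form $(x,y)\mapsto(\varphi_1 x, (\varphi_1^{-1})^{T} y)$ on the first $\R^{2n}\times\R^{2n}\simeq T^*\R^{2n}$ block together with the analogous torus block; this is a cotangent lift and hence base-linear. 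I would verify this by a direct (but short) linear-algebra computation with $\tau$ and $\tau^{-1}$, and then note that because all maps involved agree with the identity outside a compact set and are linear on the relevant factors, they extend across the one-point compactifications $\mathds{S}^{2n}$, $\mathds{S}^{k}$ without trouble. Everything after that — steps (2)–(4) — is standard generating-function bookkeeping and the elementary observation that a symplectic linear map has positive determinant, so fixes the orientation class.
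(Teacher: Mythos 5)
Your overall strategy coincides with the paper's: exhibit a generating function for $\varphi^{-1}\circ\psi\circ\varphi$ of the form $F_\psi\circ\Phi$ with $\Phi$ a diffeomorphism of the base, then push the K\"unneth identification and the sublevel sets through $\Phi$ to convert $c(\cdot,F_{\varphi^{-1}\psi\varphi})$ into $c(\Phi^*(\cdot),F_\psi)$, and finally compute $\Phi^*(\mu\otimes\alpha)$. However, there is one genuine gap in your step (1), precisely at the point you dismiss as ``without trouble.'' The base map induced by conjugation is the \emph{linear} map $\varphi_1\times\varphi_2$ on $\R^{2n+k}\times\T^k$, and a general linear isomorphism of $\R^{N}$ does \emph{not} extend to a diffeomorphism of the one-point compactification $\mathds{S}^{N}$ (in the chart at infinity the extension is smooth only for conformal maps). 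Your justification --- that ``all maps involved agree with the identity outside a compact set'' --- is false for $\varphi$ itself: only $\psi$ and its conjugate are compactly supported. The paper's Lemma~\ref{lem:hamconj} exists exactly to repair this: one replaces $\varphi_1$ by a diffeomorphism $\varphi_1'$ that equals $\varphi_1$ on $B_R^{2n+k}\cup\varphi_1^{-1}(B_R^{2n+k})$ (large enough to contain $\supp\psi$ and the relevant part of $L_\psi$) and is standard near infinity, so that it does extend to $B$; since $L_\psi$ coincides with the zero section outside a compact set, $F_\psi\circ\Phi$ with this modified $\Phi$ still generates $\varphi^{-1}\circ\psi\circ\varphi$. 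Without this modification your step (2) has no meaning on $T^*B$ and the uniqueness theorem cannot be invoked.

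A second, smaller flaw: your computation $\Phi^*(\mu\otimes\alpha)=\mu\otimes\varphi_2^*\alpha$ rests on ``$\varphi_1$ is symplectic, hence has determinant $+1$.'' But $\varphi_1$ alone is merely linear on $\R^{2n+k}$ (odd-dimensional when $k$ is odd), and only the product $\varphi_1\times\varphi_2$ is symplectic; moreover the corrected map $\varphi_1'$ may reverse orientation at infinity. So one only gets $(\varphi_1')^*\mu=\pm\mu$, as in the paper. This is harmless --- the min-max value depends only on whether the image of the class under $i_\lambda^*\circ T$ vanishes, and $\gamma$, $-\gamma$ vanish simultaneously --- but the argument should say this rather than claim the sign is $+1$. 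With these two repairs your proof becomes the paper's.
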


In order to prove Lemma \ref{prop:hamconj}, we will need suitable generating functions for $\psi$ and
$\varphi^{-1}\circ\psi\circ\varphi$:

\begin{lem} \label{lem:hamconj}
Let $F_{1}: B\times\T^{k}\times\R^N\rightarrow \R$ 
be a generating function of
$\psi\in\hamc(T^{*}(\R^{n}\times\T^{k}))$.
There exists
a diffeomorphism $\varphi_1':B\rightarrow B$
such that, if 
$\Phi : B\times \T^{k}\times\R^N\rightarrow B\times \T^{k}\times\R^N$
denotes the diffeomorphism $\Phi(q_1,q_2;\xi)=(\varphi_1'(q_1),\varphi_2(q_2);\xi)$,
then $F_{2} := F_{1}\circ\Phi$
is a generating function of $\varphi^{-1}\circ\psi\circ\varphi\in\hamc(T^{*}(\R^{n}\times\T^{k}))$.
\end{lem}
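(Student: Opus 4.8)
The plan is to start from the generating function $F_1$ of $\psi$, which encodes the Lagrangian $L_\psi \subset T^*(B\times\T^k)$ obtained by compactifying $\Gamma_\psi(T^*(\R^n\times\T^k))$, where $\Gamma_\psi = \tau\circ\mathrm{gr}_\psi$. The key observation is that conjugating $\psi$ by the \emph{linear} symplectomorphism $\varphi$ has a very clean effect on the graph: $\mathrm{gr}_{\varphi^{-1}\circ\psi\circ\varphi} = (\varphi\times\varphi)^{-1}\circ\mathrm{gr}_\psi\circ\varphi$, so that $\Gamma_{\varphi^{-1}\circ\psi\circ\varphi} = \tau\circ(\varphi\times\varphi)^{-1}\circ\tau^{-1}\circ\Gamma_\psi\circ\varphi$. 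Here is where the specific choice of $\tau$ matters (as flagged in Subsection~\ref{se:gfhc}): because $\tau(z,Z) = \big(\tfrac{z+Z}{2}, J(z-Z)\big)$ and $\varphi$ is linear and symplectic, the conjugated map $\tau\circ(\varphi\times\varphi)\circ\tau^{-1}$ is again a \emph{linear} symplectomorphism of $T^*\R^{2n+k}$ of the block form that respects the base/fiber splitting appropriately. Concretely, writing it in base–fiber coordinates $(Q,P)$ on $T^*\R^{2(n+k)}$, it is a linear map whose projection to the base $\R^{2(n+k)}$ is itself linear — this is exactly what lets us realize the conjugation as a fiber-preserving base change.

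First I would make precise the assertion that $\tau\circ(\varphi\times\varphi)\circ\tau^{-1}$ acts on the base $\R^{2(n+k)}$ (the $q$-coordinates of $T^*\R^{2(n+k)}$) by a linear map $\Psi_1$ which further splits as $\Psi_1 = (\varphi_1', \varphi_2)$ with respect to the $\R^{2n+k}\times\T^k$ decomposition, where $\varphi_1'$ is some linear automorphism of $\R^{2n+k}$ descending to $\mathds{S}^{2n}$ (or $\mathds{S}^{2n+k}$) — I define $\varphi_1'$ to be precisely this induced map. The point is that the second factor $\varphi_2$ is untouched because $\varphi = (\varphi_1,\varphi_2)$ is block-diagonal and $\tau$ acts within each factor's doubled copy. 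Next I would invoke that a symplectomorphism of $T^*B'$ which is itself linear-over-the-base pulls back the zero section's generating functions by a fiber-preserving diffeomorphism: the Lagrangian $L_{\varphi^{-1}\circ\psi\circ\varphi}$ is the image of $L_\psi$ under the cotangent lift of the base map $\Psi_1 = (\varphi_1',\varphi_2)$, and the cotangent lift of a base diffeomorphism $g$ sends a Lagrangian generated by $F$ to one generated by $F\circ(g^{-1}\times\mathrm{id}_{\R^N})$ — up to adjusting for the linear "shear" part of $\Psi_1$ in the fiber directions, which only changes $F$ by composition with a fiber-preserving diffeomorphism and hence (by Theorem~\ref{thm:gfqi}) does not affect the generating-function class. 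Setting $F_2 := F_1\circ\Phi$ with $\Phi(q_1,q_2;\xi) = (\varphi_1'(q_1),\varphi_2(q_2);\xi)$ then gives a generating function of $\varphi^{-1}\circ\psi\circ\varphi$; I would also check the normalization (critical value $0$ outside the non-compact part) is preserved, which is automatic since $\Phi$ is the identity on the $\R^N$ factor and maps the distinguished compact region to itself.

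The main obstacle I anticipate is the bookkeeping in the previous paragraph: $\tau\circ(\varphi\times\varphi)\circ\tau^{-1}$ is not literally a base-preserving map of $T^*\R^{2(n+k)}$ — it mixes base and fiber — so one cannot directly say "it is the cotangent lift of a base diffeomorphism." The resolution is that any linear symplectomorphism of $T^*\R^m$ of the form $(Q,P)\mapsto (AQ, (A^{-1})^TP + SQ)$ with $S$ symmetric (a "linear-over-the-base" symplectomorphism) preserves the class of generating functions of any given Lagrangian Hamiltonian-isotopic to the zero section, acting by the fiber-preserving diffeomorphism $(Q;\xi)\mapsto (AQ;\xi)$ together with the addition of the fiberwise-constant quadratic term $\tfrac12 Q^TS Q$ — and the $c(\cdot,\cdot)$ invariants are insensitive to such modifications because they are computed from the topology of sublevel sets (cf.\ Proposition~\ref{prop:minmax}(\ref{item:inv})). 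So I would first argue that $\tau\circ(\varphi\times\varphi)\circ\tau^{-1}$ has exactly this normal form — this is the computational heart and uses both that $\varphi$ is symplectic and the explicit formula for $\tau$ — and then the rest follows formally. The splitting $\Psi_1 = (\varphi_1',\varphi_2)$ that makes $\Phi$ have the stated product form is then read off from the block-diagonality of $\varphi$.
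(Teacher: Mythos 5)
Your route is in essence the same as the paper's: both hinge on the fact that, for this particular $\tau$, conjugation by the linear symplectomorphism $\varphi$ acts on $L_\psi$ through the base alone. The paper verifies this pointwise on $\Sigma_{F_2}$ (computing $q=\tfrac{x+\varphi^{-1}\psi\varphi(x)}{2}$ by linearity and $\partial_qF_2(q;\xi)\cdot v=\la J\varphi^{-1}(\psi(x_0)-x_0),v\ra$ by symplecticity of $\varphi^{-1}$), whereas you package the same two facts as the statement that $\tau\circ(\varphi\times\varphi)\circ\tau^{-1}$ is the cotangent lift of the base map. That packaging is fine, but you have left precisely the decisive computation undone, and your fallback for it is wrong. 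You allow for a normal form $(Q,P)\mapsto(AQ,(A^{-1})^TP+SQ)$ and propose to absorb a nonzero shear $S$ by adding $\tfrac12\la SQ,Q\ra$ to the generating function. If $S\neq 0$ this would \emph{refute} the lemma as stated: $F_1\circ\Phi$ plus a base-dependent quadratic term is not $F_1\circ\Phi$, and adding such a term is neither a fiber-preserving diffeomorphism nor a stabilization, so it cannot be waved away by Theorem~\ref{thm:gfqi}. The proof therefore stands or falls on showing $S=0$, which you must do explicitly. It does work out: writing $u=\tfrac{z+Z}{2}$, $v=J(z-Z)$, one has $\tau^{-1}(u,v)=(u-\tfrac12Jv,\,u+\tfrac12Jv)$, hence $\tau\circ(\varphi\times\varphi)\circ\tau^{-1}(u,v)=(\varphi u,\,-J\varphi Jv)$, and $\varphi^TJ\varphi=J$ gives $-J\varphi J=(\varphi^T)^{-1}$; so the conjugation is exactly the cotangent lift of $u\mapsto\varphi u$, with no shear. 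This is the ``computational heart'' and cannot be deferred.

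The second gap is the passage to the compactification. You take $\varphi_1'$ to be ``some linear automorphism of $\R^{2n+k}$ descending to'' $B$, but a nontrivial linear map does not in general extend to a \emph{diffeomorphism} of the sphere compactification (in the chart at infinity $y\mapsto Ay\,|y|^2/|Ay|^2$ fails to be differentiable at $0$ unless $A$ is conformal), and the lemma requires $\varphi_1'\colon B\to B$ to be a diffeomorphism so that $\Phi$ is one. The paper's fix is to let $\varphi_1'$ agree with $\varphi_1$ only on a large ball containing $\supp\psi\cup\varphi^{-1}(\supp\psi)$ (suitably projected), and to deform it to a standard map $(x_1,\dots,x_{2n+k-1},\pm x_{2n+k})$ outside a compact set; one must then also check, as the paper does, that the points of $\Sigma_{F_2}$ lying over the modified region still land on the zero section, so that the generated Lagrangian and the normalization are unaffected. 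With these two points filled in, your argument closes.
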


\begin{proof}[Proof of Lemma \ref{lem:hamconj}]
Let $\psi\in\hamc(T^{*}(\R^{n}\times\T^{k}))$ and $R>0$ such that $\supp\psi\subset B_{R}^{2n+k}\times\T^k$.
Since $\varphi_1:\R^{2n+k}\rightarrow\R^{2n+k}$ is linear and invertible,
one can find a diffeomorphism $\varphi_1':\R^{2n+k}\rightarrow\R^{2n+k}$ such that
$\varphi_1'(x)=\varphi_1(x)$ for all $x\in B_{R}^{2n+k}\cup \varphi_1^{-1}(B_R^{2n+k})$ and
$\varphi_1'(x)=(x_1,\ldots ,x_{2n+k-1},\pm x_{2n+k})$ outside some compact set,
thus $\varphi_1'$ can naturally be extended into a diffeomorphism $\varphi_1':B\rightarrow B$.
Let $F_{1}: B\times\T^{k}\times\R^N\rightarrow \R$ be a generating function of
$\psi$,
we then define the diffeomorphism $\Phi : B\times \T^{k}\times\R^N\rightarrow B\times \T^{k}\times\R^N$
by $\Phi(q_1,q_2;\xi):=(\varphi_1'(q_1),\varphi_2(q_2);\xi)$.
The function $F_2:=F_1\circ\Phi$ is a generating function since
$\frac{\partial F_2}{\partial \xi}=\frac{\partial F_1}{\partial \xi}\circ\Phi$.
Let $(q;\xi)\in\Sigma_{F_2}$ and $q_0:=(\varphi_1'\times\varphi_2)(q)$.
First, let us assume that $q\in \varphi^{-1}(B_{R}^{2n+k}\times\T^k)$ (so $q_0 = \varphi (q)$),
since $(q_0;\xi)\in\Sigma_{F_1}$, there exists $x_0\in T^{*}(\R^{n}\times\T^{k})$ such that
\begin{equation*}
(q_0;\partial_{q}F_1(q_0;\xi)\cdot v)=
\left(\frac{x_0+\psi(x_0)}{2};\la J(\psi(x_0)-x_0),v \ra \right),
\qquad\forall v\in \R^{2(n+k)}.
\end{equation*}
Let $x=\varphi^{-1}(x_0)$.
On the one hand,
by linearity of $\varphi^{-1}$,
\begin{equation*}
q=\frac{x+\varphi^{-1}\circ\psi\circ\varphi(x)}{2},
\end{equation*}
on the other hand,
$\partial_{q}F_2(q_0;\xi)\cdot v = \la J(\psi(x_0)-x_0),\varphi(v) \ra$ for all $v\in\R^{2(n+k)}$
and $\varphi^{-1}$ is a linear symplectomorphism, thus
\begin{equation*}
\partial_{q}F_2(q_0;\xi)\cdot v = \la J\varphi^{-1}(\psi(x_0)-x_0),v \ra
= \la J(\varphi^{-1}\circ\psi\circ\varphi(x)-x),v \ra,
\qquad\forall v\in \R^{2(n+k)}.
\end{equation*}
Now, let us assume that $q\not\in \varphi^{-1}(B_{R}^{2n+k}\times\T^k)$.
If $q$ is at infinity,
then $\partial_{q}F_{2}(q;\xi) = 0$ since $\ud F_1 = 0$ at any point at infinity.
If $q\in \R^{2n+k}\times\T^{k}$, let $x_0\in T^{*}(\R^{n}\times\T^{k})$ be associated to $q_0$
as above. Since $\frac{x_0+\psi(x_0)}{2}\not\in B_{R}^{2n+k}\times\T^k$,
necessarily, $\psi(x_0)=x_0\not\in B_{R}^{2n+k}\times\T^k$ so $\partial_{q}F_{1}(q_0;\xi)=0$ and
$(q_0;\xi)$ is a critical value of $F_1$.
Hence $(q;\xi)$ is a critical value of $F_2$ and
\begin{equation*}
(q;\partial_{q}F_2(q;\xi))= (q;0) =
\left(\frac{x+\varphi^{-1}\circ\psi\circ\varphi(x)}{2};J(\psi(x)-x) \right),
\end{equation*}
where $x=(\varphi_1'\times\varphi_2)^{-1}(x_0)\not\in\supp(\varphi^{-1}\circ\psi\circ\varphi)$.

Conversely, if $x\in T^{*}(\R^{n}\times\T^k)$, the associated $(q;\xi)\in\Sigma_{F_2}$
is given by $((\varphi_1'\times\varphi_2)^{-1}(q_0);\xi)$ where $(q_0;\xi)\in\Sigma_{F_1}$ is
associated to $x_0=(\varphi_1'\times\varphi_2)(x)\in T^{*}(\R^{n}\times\T^k)$.
\end{proof}

\begin{proof}[Proof of Lemma~\ref{prop:hamconj}]
Let $F_1:B\times\T^k\times\R^N\rightarrow\R$ be a generating function of
$\psi\in\hamc(T^{*}(\R^{n}\times\T^{k}))$.
Let $\varphi_1':B\rightarrow B$ and $\Phi:B\times\T^k\times\R^N\rightarrow B\times\T^k\times\R^N$
be the diffeomorphisms defined by Lemma~\ref{lem:hamconj} such that $F_2:=F_1\circ\Phi$
is a generating function of $\varphi^{-1}\circ\psi\circ\varphi$.
Let us denote by $E_{1}$ and $E_{2}$ the domains of the generating functions
$F_{1}$ and $F_{2}$ respectively.
For all $\lambda \in \R$, $\Phi$ gives a diffeomorphism
of sublevel sets $\Phi:E_{2}^{\lambda}\rightarrow E_{1}^{\lambda}$.
In particular, it induces an homology isomorphism
$ \Phi_{*} : H_{*}(E_{2}^{\lambda}, E_{2}^{-\infty}) \rightarrow H_{*}(E_{1}^{\lambda}, E_{1}^{-\infty})$.
We thus have the following commutative diagram:
\[
\xymatrix{
H^{l}(B\times\T^{k}) \ar[d]^{(\varphi_1'\times\varphi_2)^{*}}  \ar[r]^{T_{1}} &H^{l+q}(E_{1}^{\infty}, E_{1}^{-\infty}) \ar[d]^{\Phi^{*}} \ar[r]^{i_{1,\lambda}^{*}} &H^{l+q}(E_{1}^{\lambda}, E_{1}^{-\infty}) \ar[d]^{\Phi^{*}} \\
H^{l}(B\times\T^{k}) \ar[r]^{T_{2}} &H^{l+q}(E_{2}^{\infty}, E_{2}^{-\infty}) \ar[r]^{i_{2,\lambda}^{*}} &H^{l+q}(E_{2}^{\lambda}, E_{2}^{-\infty})
}
\]
where the $T_{j}$'s denote the isomorphisms induced by the K\"unneth formula (\ref{eq:kun})
and the $i^{*}_{j,\lambda}$'s are the morphisms induced by the inclusions
$i_{j,\lambda}:(E_{j}^{\lambda},E_{j}^{-\infty})\hookrightarrow (E_{j}^{\infty},E_{j}^{-\infty})$.
The commutativity of the right square is clear.
As for the left square, it commutes because $\pi\circ\Phi = (\varphi_1'\times\varphi_2)\circ\pi$,
where $\pi:B\times\T^{k}\times\R^{N}\rightarrow B\times\T^{k}$ is the canonical projection.
Let $\alpha$ be a non-zero class of $H^{l}(\T^{k})$ and $\mu$ be the orientation class of $H^{*}(B)$.
Since the vertical arrows are isomorphisms,
$i_{1,\lambda}^{*}T_1(\mu\otimes\alpha)$ is non-zero if and only if
$i_{2,\lambda}^{*}T_2(\varphi_1'\times\varphi_2)^{*}(\mu\otimes\alpha)$ is non-zero.
Since $\varphi_1'$ is a diffeomorphism, $(\varphi_1')^{*}\mu = \pm\mu$,
thus $(\varphi_1'\times\varphi_2)^{*}(\mu\otimes\alpha) = \pm\mu\otimes\varphi_2^{*}\alpha$
and $i_{1,\lambda}^{*}T_1(\mu\otimes\alpha)$ is non-zero if and only if
$\pm i_{2,\lambda}^{*}T_2(\mu\otimes\varphi_2^{*}\alpha)$ is non-zero.
Therefore,
\begin{equation*}
c(\mu\otimes\alpha, F_1) = c(\mu\otimes\varphi_{2}^{*}\alpha, F_2).
\end{equation*}

\end{proof}

\subsection{Reduction lemma}
In this subsection, we work on the space $T^{*}(\R^{m}\times\T^l\times\T^k)\times S^1$
and the points in this space will be denoted by $(q,p,z)$,
where $q=(q_1,q_2)\in(\R^{m}\times\T^l)\times\T^k$
and $p=(p_1,p_2)\in\R^{m+l}\times\R^k$.
Let $B$ be a compactification of $T^{*}(\R^{m}\times\T^l)$.
Given any open set $U\subset T^{*}(\R^{m}\times\T^l\times\T^k)\times S^1$ and any point $w\in\T^k$,
the \emph{reduction} $U_{w}\subset T^{*}(\R^{m}\times\T^l)\times S^1$ at $q_2=w$ is defined by
\begin{equation*}
U_{w}:=\pi(U\cap\{q_2 = w\}),
\end{equation*}
where $\pi:T^{*}(\R^{m}\times\T^l)\times\{w\}\times\R^{k}\times S^1\rightarrow T^{*}(\R^{m}\times\T^l)\times S^1$
is the canonical projection.

\begin{lem} \label{lem:cred}
Let $\mu$ be the orientation class of $B\times\mathds{S}^k\times S^1$ and $1$ be the generator of $H^{0}(\T^k)$.
For any open bounded set $U\subset T^{*}(\R^{m}\times\T^l\times\T^k)\times S^1$
and any $w\in\T^{k}$,
\begin{equation*}
c(\mu\otimes 1, U) \leq \gamma (U_{w}).
\end{equation*}
\end{lem}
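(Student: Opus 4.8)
The plan is to unwind the two suprema/infima in the statement, reducing the assertion to a pointwise inequality between generating-function invariants, and then to combine Viterbo's reduction inequality (Proposition~\ref{prop:redgf}) with the contact energy--capacity mechanism underlying Sandon's invariant $\gamma$. Writing $\mu'$ for the orientation class of $B\times S^1$, we have $c(\mu\otimes 1,U)=\sup_{\phi\in\contoc(U)}\lceil c(\mu\otimes 1,\phi)\rceil$ and $\gamma(U_w)=\inf\{\lceil c(\mu',\phi')\rceil+\lceil c(\mu',(\phi')^{-1})\rceil\}$, the infimum running over $\phi'\in\contoc(T^{*}(\R^{m}\times\T^{l})\times S^1)$ with $\phi'(U_w)\cap U_w=\emptyset$; so it is enough to fix one such $\phi$ and one such $\phi'$ and establish
\begin{equation*}
\big\lceil c(\mu\otimes 1,\phi)\big\rceil\ \le\ \big\lceil c(\mu',\phi')\big\rceil+\big\lceil c(\mu',(\phi')^{-1})\big\rceil .
\end{equation*}

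The first move is to reduce the generating function. A generating function $F_\phi$ of $\phi$ has a base carrying a $\T^{k}$-factor that records the averaged $q_2$-coordinate produced by the graph construction of Subsection~\ref{se:gfhc}; restricting this factor to the value $w$ yields a function $(F_\phi)_w$ quadratic at infinity over $B\times\mathds{S}^k\times S^1$ (the surviving $\mathds{S}^k$ recording the averaged $p_2$-coordinate), and Proposition~\ref{prop:redgf}~(\ref{item:ineqred}) gives
\begin{equation*}
c(\mu\otimes 1,\phi)=c(\mu\otimes 1,F_\phi)\ \le\ c(\mu,(F_\phi)_w).
\end{equation*}
The point here is that, reading off $\widehat\tau\circ\widehat{\textup{gr}_\phi}$, a critical point of $(F_\phi)_w$ corresponds to a point $x$ at which $\phi$ fixes the $q_1$-, $q_2$- and $p_1$-coordinates, has trivial conformal factor, and necessarily satisfies $q_2=w$ ($\phi$ may still move the $p_2$- and $z$-coordinates of $x$), its critical value being the $z$-displacement of $x$. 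Hence any critical point with nonzero critical value lies in $\supp\phi\subset U$ and, having $q_2=w$, projects into $U_w$; since $c(\mu,(F_\phi)_w)$ is always a critical value, it is the action of such a point sitting over $U_w$.

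Next I would lift $\phi'$ to a compactly supported contactomorphism $\widetilde{\phi'}$ of $T^{*}(\R^{m}\times\T^{l}\times\T^{k})\times S^1$, obtained by multiplying the contact Hamiltonian of a contact isotopy from the identity to the trivial $(q_2,p_2)$-extension of $\phi'$ by a cut-off function of $p_2$ equal to $1$ on a bounded region containing $\overline U$ and its image under the whole isotopy: there $\widetilde{\phi'}$ fixes $q_2$ and acts as $\phi'$ on the $(q_1,p_1,z)$-factor, so it displaces the slab $\{(q_1,q_2,p_1,p_2,z):(q_1,p_1,z)\in U_w,\ |p_2|\ \text{bounded}\}$; moreover, being $q_2$-independent on that region, its reductions $(F_{\widetilde{\phi'}})_w$ and $(F_{(\widetilde{\phi'})^{-1}})_w$ are generating functions for the reductions of $\Gamma_{\widetilde{\phi'}}$ and $\Gamma_{(\widetilde{\phi'})^{-1}}$, which by Proposition~\ref{prop:redgf}~(\ref{item:eqred}) and Proposition~\ref{prop:ccont}~(\ref{item:ccontcomp}) satisfy $\lceil c(\mu,(F_{\widetilde{\phi'}})_w)\rceil=\lceil c(\mu',\phi')\rceil$ and $\lceil c(\mu,(F_{(\widetilde{\phi'})^{-1}})_w)\rceil=\lceil c(\mu',(\phi')^{-1})\rceil$.

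The last step is the contact energy--capacity argument, carried out now inside $B\times\mathds{S}^k\times S^1$: composing the Legendrian generated by $(F_\phi)_w$ with the displacing contactomorphism generated by $(F_{\widetilde{\phi'}})_w$ removes the contribution of the critical values of $(F_\phi)_w$ lying over $U_w$, so that the sub-additivity and duality of min-max critical values (Proposition~\ref{prop:minmax}~(\ref{item:additivity}) and (\ref{item:duality})) together with Sandon's ceiling estimates (Proposition~\ref{prop:ccont}~(\ref{item:ccontdual}) and (\ref{item:ccontminus})) give
\begin{equation*}
\big\lceil c(\mu,(F_\phi)_w)\big\rceil\ \le\ \big\lceil c(\mu,(F_{\widetilde{\phi'}})_w)\big\rceil+\big\lceil c(\mu,(F_{(\widetilde{\phi'})^{-1}})_w)\big\rceil .
\end{equation*}
Combining the three displayed inequalities with the identifications just made produces the required pointwise bound, and taking the supremum over $\phi$ and the infimum over $\phi'$ finishes the proof. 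I expect the real obstacle to be this energy--capacity step: $(F_\phi)_w$ is an honest generating function quadratic at infinity but not literally the generating function of a contactomorphism of the reduced space (the extra $\mathds{S}^k$-factor survives, and $\phi$ need not preserve the slice $\{q_2=w\}$), so one must re-run Sandon's argument at the level of these reduced functions while keeping exact track of the off-by-one phenomena that force all the ceilings; the construction and bookkeeping of the cut-off in $\widetilde{\phi'}$, and the verification that the composition really cancels the critical values over $U_w$, are the other points requiring care.
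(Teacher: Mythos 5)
Your outer frame matches the paper's: unwind the sup and inf to a pointwise inequality, apply the reduction inequality of Proposition~\ref{prop:redgf}~(\ref{item:ineqred}) to get $c(\mu\otimes 1,\phi)\le c(\mu,(F_\phi)_w)$, then try to bound $c(\mu,(F_\phi)_w)$ by the two $\gamma$-type terms via sub-additivity, duality and the ceiling identities. Your description of the critical points of $(F_\phi)_w$ is also correct. But the step you yourself flag as ``the real obstacle'' is exactly the content of the proof, and you have not supplied it. The inequality
\begin{equation*}
\big\lceil c(\mu,(F_\phi)_w)\big\rceil\ \le\ \big\lceil c(\mu',\phi')\big\rceil+\big\lceil c(\mu',(\phi')^{-1})\big\rceil
\end{equation*}
does not follow from sub-additivity plus duality plus Sandon's lemmas alone: sub-additivity only gives $c(\mu,F_w)\le c(\mu,F_w-\widetilde{K})-c(1,-\widetilde{K})$, and one must then prove that $\lceil c(\mu,F_w-\widetilde{K})\rceil=\lceil c(\mu,-\widetilde{K})\rceil$, i.e.\ that the displacement of $U_w$ kills, modulo integers, the contribution of the critical values sitting over $U_w$. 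The phrase ``composing \dots removes the contribution of the critical values of $(F_\phi)_w$ lying over $U_w$'' is not an argument: critical points of $F_w-\widetilde{K}$ are intersection points of the reduced Legendrian with (integer $z$-shifts of) $L_{\varphi}$, not a union of critical points of the two summands, and the displacement hypothesis constrains such intersections only when the associated critical value is an integer.

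The paper fills this gap with its Lemma~\ref{lem:technical}, which is a homotopy argument: one chooses a contact isotopy $\psi_t$ from the identity to $\phi$ supported in $U$, a continuous family of generating functions $F^t$ (Theorem~\ref{thm:contgfqi}), and studies $c^t=c(\mu,F^t_w-\widetilde{K})$. The dichotomy ``either $c^t$ is never an integer, or it is a constant integer'' is what forces the equality of ceilings, and its proof uses precisely the geometric fact you omitted: if $c^{t_0}=\ell\in\Z$, the min-max critical point realizes an intersection of $L^{t_0}_w$ with $L_{\varphi}+\ell\frac{\partial}{\partial a}$, which the ($z$-periodic) condition $\varphi(U_w)\cap U_w=\emptyset$ localizes outside $\supp\psi_{t_0}$; a local Morse-theoretic argument then locks $c^t$ at $\ell$ nearby. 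Without this (or an equivalent substitute), your chain of inequalities does not close. A secondary remark: your lift of $\phi'$ to the big space via a $p_2$-cutoff is an unnecessary complication; the paper simply sets $\widetilde{K}(x,y,z;\eta):=K(x,z;\eta)$ on the reduced base times $\mathds{S}^k$, which generates $L_{\varphi}\times 0_{T^{*}\mathds{S}^{k}}$ and is handled directly by Proposition~\ref{prop:redgf}~(\ref{item:eqred}), whereas your cutoff would require checking that the reduction of the generating function of the lifted map really coincides with $\widetilde{K}$.
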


It is an extension to the contact case of Viterbo-Bustillo's reduction lemma
\cite[Prop. 2.4]{Bus} and \cite[Prop. 5.2]{Vit}.
We will follow Bustillo's proof as close as 
contact structure allows us to do.

Let $\mu$ be the orientation class of $B\times\mathds{S}^k\times S^1$ and $1$ be the generator of $H^{0}(\T^k)$
and fix an open bounded set $U\subset T^{*}(\R^{m}\times\T^l\times\T^k)\times S^1$
and a point $w\in\T^{k}$.
Remark that one can write $\mu=\mu_1\otimes\mu_2$, where $\mu_1$ and $\mu_2$ are the orientation classes
of $B\times S^1$ and $\mathds{S}^k$ respectively.
By definition of the contact invariants, it is enough to show that, given any $\psi\in\contoc(U)$ and any 
$\varphi\in\contoc(T^{*}(\R^{m}\times\T^l)\times S^1)$ such that $\varphi (U_{w})\cap U_{w}=\emptyset$,
\begin{equation*}
\lceil c(\mu\otimes 1, \psi)\rceil \leq
\left\lceil c\big(\mu_1,\varphi\big)\right\rceil + \left\lceil c\left(\mu_1,\varphi^{-1} \right) \right\rceil.
\end{equation*}
Thus, we fix a contact isotopy $\psi_{t}$ defined on $T^{*}(\R^{m}\times\T^l\times\T^k)\times S^1$ and
compactly supported in $U$ such that $\psi_0=\id$ and $\psi_1=:\psi\in\contoc(U)$
and we fix a contactomorphism $\varphi\in\contoc(T^{*}(\R^{m}\times\T^l)\times S^1)$ such that
$\varphi (U_{w})\cap U_{w}=\emptyset$.
Let $F^{t}:(B\times\mathds{S}^k\times\T^k\times S^1)\times\R^{N}\rightarrow\R$
be a continuous family of generating functions for the Legendrians
$L^{t}:=L_{\psi_{t}}\subset J^{1}(B\times\mathds{S}^k\times\T^k\times S^1)$
given by Theorem~\ref{thm:contgfqi}, $F:=F^1$ and
$K:(B\times S^{1})\times \R^{N'}\rightarrow\R$ be a generating function of $\varphi$.
By the uniqueness statement of Theorem~\ref{thm:contgfqi}, one may suppose that
$F^{0}(x;\xi)=Q(\xi)$ where $Q:\R^N\rightarrow\R$ is a non-degenerated quadratic form without loss of generality.
Recall that $F^t_{w}:(B\times\mathds{S}^k\times S^1)\times\R^{N}\rightarrow\R$ denotes the function
$F^t_{w}(q_1,p,z;\xi):=F^t(q_1,w,p,z;\xi)$ and let
$\widetilde{K}:(B\times\mathds{S}^k\times S^1)\times\R^{N'}\rightarrow\R$ be the generating function defined by
$\widetilde{K}(x,y,z;\eta) := K(x,z;\eta)$.
In order to prove Lemma~\ref{lem:cred}, we will use the following

\begin{lem}\label{lem:technical}
Given $t\in [0,1]$, let $c^{t} := c(\mu, F^{t}_{w}-\widetilde{K})$
which is a continuous $\R$-valued function. Then we have the following alternative:
\begin{itemize}
\item either $\forall t\in [0,1]$, $c^{t}\not\in\Z$
\item or $\exists \ell \in \Z$ such that $\forall t\in [0,1]$, $c^{t} = \ell$.
\end{itemize}
In particular,
\begin{equation*}
\left\lceil c\left(\mu,-\widetilde{K}\right)\right\rceil = 
\left\lceil c\left(\mu, F_{w}-\widetilde{K}\right)\right\rceil.
\end{equation*}
\end{lem}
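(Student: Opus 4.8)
The plan is the following. It suffices to prove the alternative: granting it, $t\mapsto\lceil c^t\rceil$ is constant (in the first case $c^t\notin\Z$ for all $t$, so $t\mapsto\lceil c^t\rceil$ is a continuous integer-valued function on the connected interval $[0,1]$, hence constant; in the second case this is trivial), whence $\lceil c^0\rceil=\lceil c^1\rceil$. Since $F^0$ depends only on $\xi$, we have $F^0_w(x;\xi)=Q(\xi)$, so $Q-\widetilde K$ is a stabilization of $-\widetilde K$ generating the same Legendrian; both are normalized so that their critical points outside the relevant compact set have value $0$ (hence so does $Q-\widetilde K$ at $\xi=0$), and Proposition~\ref{prop:minmax}\,(\ref{item:inv}) gives $c^0=c(\mu,Q-\widetilde K)=c(\mu,-\widetilde K)$; since $F=F^1$, also $c^1=c(\mu,F_w-\widetilde K)$. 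This is the desired identity. I would also record at the outset that $c^\bullet$ is continuous: $(F^t)$ is a continuous family of generating functions all with associated quadratic form $Q$, hence so is $(F^t_w-\widetilde K)$ on a fixed domain, and min-max values are $1$-Lipschitz functions of such families for the $C^0$-topology.

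To prove the alternative, set $I:=\{t\in[0,1]\,|\,c^t\in\Z\}$, which is closed since $c^\bullet$ is continuous. The key claim is that $c^\bullet$ is locally constant near every point of $I$; granting it, $I$ is open, so by connectedness of $[0,1]$ either $I=\emptyset$ (first case of the alternative) or $I=[0,1]$ with $c^\bullet\equiv\ell$ for some $\ell\in\Z$ (second case). To prove the key claim, fix $t_0\in I$ and put $\ell:=c^{t_0}\in\Z$. Here one uses that $G^t:=F^t_w-\widetilde K$ is a continuous family of generating functions of Legendrians $\Lambda^t\subset J^1(B\times\mathds{S}^k\times S^1)$ moving by a contact isotopy, and that both $F^t_w$ and $\widetilde K$, being built from contactomorphisms of a space of the form $(\cdots)\times S^1$, are $\Z$-equivariant in the $S^1$-variable. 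Because of this equivariance, an integer critical value $\ell$ of $G^{t_0}$ does not merely exist but persists as a critical value of $G^t$ for all $t$ near $t_0$ --- it cannot drift to $\ell\pm\varepsilon$ --- and then the min-max characterization of $c(\mu,G^t)$ forces $c^t=\ell$ for such $t$. This persistence of integer critical values is precisely the phenomenon behind \cite[Lemma~3.15]{San}, which is invoked in the excerpt for Proposition~\ref{prop:ccont}\,(\ref{item:ccontinv}); here it must be run for the family $(G^t)$.

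The main obstacle is exactly this last step. Since $\Lambda^t$ is not the graph of a contactomorphism --- it arises from the reduction of $\psi_t$ composed with $\varphi^{-1}$ --- one cannot quote \cite[Lemma~3.15]{San} verbatim, and must instead redo the underlying Morse-theoretic and Lusternik--Schnirelmann argument on the sublevel sets $\{G^t<\lambda\}$, using the $\Z$-equivariance to show that the homotopy type of the pair $\big(\{G^t<\ell+\varepsilon\},\{G^t<\ell-\varepsilon\}\big)$ is the same for all small $\varepsilon>0$ and all $t$ near $t_0$, so that the class $T\mu$ can neither become nonzero below level $\ell$ nor fail to be present at level $\ell+\varepsilon$. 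I expect this to be the most delicate part; everything else (continuity, the connectedness dichotomy, and the stabilization identification of $c^0$) is routine.
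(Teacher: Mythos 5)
The soft parts of your argument match the paper: the continuity of $t\mapsto c^{t}$, the reduction of the alternative to showing $c^{t}$ is locally constant near the closed set $\{t\,:\,c^{t}\in\Z\}$, and the derivation of the displayed identity from constancy of $\lceil c^{t}\rceil$ together with the fact that $F^{0}_{w}-\widetilde{K}$ is a stabilization of $-\widetilde{K}$. But the core of the lemma --- why $c^{t}$ is locally constant near a $t_{0}$ with $c^{t_{0}}=\ell\in\Z$ --- is not established, and the mechanism you propose is not the one that works. You attribute the rigidity to ``$\Z$-equivariance in the $S^{1}$-variable'' and propose to redo a Lusternik--Schnirelmann argument on sublevel sets, but you never invoke the two hypotheses that actually drive the proof: $\varphi(U_{w})\cap U_{w}=\emptyset$ and $\supp\psi_{t}\subset U$. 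Without these the alternative is false --- take $\varphi=\id$, so that $\widetilde{K}$ is a stabilized quadratic form and $c^{t}=c(\mu,F^{t}_{w})$, which can sweep monotonically through an integer for a suitable Hamiltonian isotopy --- so no argument resting on equivariance alone can succeed. Your assertion that an integer critical value of $G^{t_{0}}$ ``cannot drift to $\ell\pm\varepsilon$'' is precisely what must be proven, and you acknowledge you do not know how to prove it.

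What the paper does instead: the critical-point equations at the min-max point of $F^{t_{0}}_{w}-\widetilde{K}$ of value $\ell$ produce a point of $L^{t_{0}}_{w}$ and a point of $\left(L_{\varphi}+\ell\frac{\partial}{\partial a}\right)\times 0_{T^{*}\mathds{S}^{k}}$ sharing base and fiber coordinates. The displacement condition, lifted $\Z$-equivariantly to $J^{1}\R^{m+l}$, shows that $L_{\widetilde{\varphi}}+\ell\frac{\partial}{\partial a}$ misses $\widehat{\tau}(\widetilde{U_{w}}\times\widetilde{U_{w}}\times\R)$; hence either the corresponding point of $T^{*}(\R^{m}\times\T^{k+l})\times S^{1}$ or its image under $\psi_{t_{0}}$ lies outside $U_{w}$, and since $q_{2}=Q_{2}=w$ and $\supp\psi_{t_{0}}\subset U$, that point is a fixed point outside the support. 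Consequently $F^{t_{0}}$ vanishes there and the whole value $\ell$ is carried by $-\widetilde{K}$; for nearby $t$ the min-max critical point remains outside $\supp H$ (an open condition), so $F^{t}$ still contributes $0$, and a Morse perturbation of $K$ pins its contribution at $-\ell$, giving $c^{t}\equiv\ell$ locally. This is where the genuine work of the lemma lies, and it is absent from your proposal.
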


\begin{proof}[Proof of Lemma~\ref{lem:technical}]
The reduced function $F^t_w$ generates
$L^{t}_{w}\subset J^{1}(B\times\mathds{S}^k\times S^1)$. 
$L^{t}$ is the image of the immersion (plus points in the $0$-section at infinity):
\begin{equation*}
\Gamma_{\psi^{t}}(q,p,z) =
\left( \frac{q+Q^{t}}{2},\frac{e^{\theta^{t}}p+P^{t}}{2},z
;P^{t}-e^{\theta^{t}}p,q-Q^{t},e^{\theta^{t}}-1
;\frac{1}{2}\left(e^{\theta^{t}}p+P^{t}\right)\left(q-Q^{t}\right)+Z^{t}-z\right),
\end{equation*}
writing $\psi_{t}(q,p,z) = (Q^{t},P^{t},Z^{t})$.
Therefore, $L^{t}_{w}$ is the set of points (plus points in the $0$-section at infinity):
\begin{equation*}
\left( \frac{q_{1}+Q_{1}^{t}}{2},\frac{e^{\theta^{t}}p+P^{t}}{2},z
;P_{1}^{t}-e^{\theta^{t}}p_{1},q-Q^{t},e^{\theta^{t}}-1
;\frac{1}{2}\left(e^{\theta^{t}}p+P^{t}\right)\left(q-Q^{t}\right)+Z^{t}-z\right)
\end{equation*}
for points $(q,p,z)$ that verify $\frac{q_{2}+Q_{2}^{t}}{2}=w$.
In the remaining paragraphs, we will use notations $\mathfrak{p}=\frac{e^{\theta}p+P}{2}$
and $\mathfrak{q}=\frac{q+Q}{2}$.

Suppose there exists $\ell\in\Z$ and $t_{0}\in[0,1]$ such that $c^{t_{0}}=\ell$.
Then it is enough to prove that $t\mapsto c^{t}$ is locally constant.
In order to do so, we will follow Bustillo's proof \cite[lemma 2.8]{Bus}.
Let $\left(\mathfrak{q}^{t_0}_{1},\mathfrak{p}^{t_0},z^{t_0};\xi^{t_0},\eta^{t_0}\right)$ be the critical point of
\begin{equation*}
(F^{t_0}_{w}-\widetilde{K})(\mathfrak{q}_{1},\mathfrak{p},z;\xi,\eta)=
F^{t_0}(\mathfrak{q}_{1},w,\mathfrak{p},z;\xi) - K(\mathfrak{q}_{1},\mathfrak{p}_{1},z;\eta)
\end{equation*}
associated to the min-max value $c^{t_0}=\ell$.
By continuity of the min-max critical point, we may suppose that $K$ is a Morse function in some
neighborhood of
$\left(\mathfrak{q}^{t_0}_{1},\mathfrak{p}^{t_0}_{1},z^{t_0};\eta^{t_0}\right)$
by perturbing $K$ without changing its value at this point.
Writing $x^{t_0}:=\left(\mathfrak{q}^{t_0}_{1},\mathfrak{p}^{t_0},z^{t_0}\right)$, such a critical point verifies
\begin{equation*}
\frac{\partial F^{t_0}_{w}}{\partial x} = \frac{\partial\widetilde{K}}{\partial x} \text{ and }
\frac{\partial F^{t_0}_{w}}{\partial \xi} = \frac{\partial\widetilde{K}}{\partial \eta} = 0.
\end{equation*}
These equations define two points 
$(x^{t_0},\partial_{x}F^{t_0}_{w},F^{t_0}_{w})=
(x^{t_0},\partial_{x}\widetilde{K},F^{t_0}_{w})\in L^{t_0}_{w}$
and $(x^{t_0},\partial_{x}\widetilde{K},\widetilde{K})\in L_{\varphi}\times 0_{T^{*}\mathds{S}^{k}}$
which only differ in
the last coordinate by a $\ell\frac{\partial}{\partial a}$ factor:
\begin{equation*}
\left(x^{t_0},\partial_{x}F^{t_0}_{w},F^{t_0}_{w}\right) = \left(x^{t_0},\partial_{x}\widetilde{K},\widetilde{K}+\ell\right) 
= \left(x^{t_0},\partial_{x}\widetilde{K},\widetilde{K}\right) + \ell\frac{\partial}{\partial a}.
\end{equation*}
We will denote by $(q^{t_0},p^{t_0},z^{t_0})\in T^{*}(\R^{m}\times\T^{k+l})\times S^1$ the point whose image is
$\Gamma_{\psi_{t_0}}(q^{t_0},p^{t_0},z^{t_0}) =
((\mathfrak{q}_{1}^{t_0},w,\mathfrak{p}^{t_0},z^{t_0};\xi^{t_0}),\partial_{x}F^{t_0},F^{t_0})$
and we will denote $(Q^{t_0},P^{t_0},Z^{t_0})=\psi_{t_0}(q^{t_0},p^{t_0},z^{t_0})$.
Since $(x^{t_0},\partial_{x}\widetilde{K},\widetilde{K})\in L_{\varphi}\times 0_{T^{*}\mathds{S}^{k}}$,
$\partial_{p_2}\widetilde{K} = 0$ so $q^{t_0}_{2} = Q^{t_0}_{2}$ ($=w$).

Remark that $\varphi(U_{w})\cap U_{w} = \emptyset$
together with $\left(x^{t_0},\partial_{x}F^{t_0}_{w},F^{t_0}_{w}\right)\in
\left(L_{\varphi}+\ell\frac{\partial}{\partial a}\right)\times 0_{T^{*}\mathds{S}^{k}}$
implies
that either $\left(q^{t_0}_{1},p^{t_0}_{1},z^{t_0}\right)\not\in U_{w}$
or $\left(Q^{t_0}_{1},P^{t_0}_{1},Z^{t_0}\right)\not\in U_{w}$.
In order to see it, we go back to the definition of generating function on $T^{*}(\R^{m}\times\T^{l})\times S^1$
given in Subsection~\ref{se:gfhc}.
Let $\pi:J^{1}\R^{m+l}\rightarrow T^{*}(\R^{m}\times\T^{l})\times S^1$
be the quotient projection and
consider the $\Z^{l}\times\Z$-equivariant lift of $\varphi$:
$\widetilde{\varphi}\in\Cont(J^{1}\R^{m+l})$ and
$\widetilde{U_{w}}:=\pi^{-1}(U_{w})\subset J^{1}\R^{m+l}$.
Since $\varphi(U_{w})\cap U_{w} = \emptyset$, we have that
$\widetilde{\varphi}(\widetilde{U_{w}})\cap \widetilde{U_{w}} = \emptyset$
so $L_{\widetilde{\varphi}}\cap \widehat{\tau}(\widetilde{U_{w}}\times \widetilde{U_{w}}\times \R) = \emptyset$.
But $\widetilde{U_{w}}+\frac{\partial}{\partial z}=\widetilde{U_{w}}$
and $\widehat{\tau}(x,X+\frac{\partial}{\partial z},\theta) = \widehat{\tau}(x,X,\theta)+\frac{\partial}{\partial a}$
for all $(x,X,a)\in J^{1}\R^{m+l}\times J^{1}\R^{m+l}\times \R$,
intersection $L_{\widetilde{\varphi}}\cap \widehat{\tau}(\widetilde{U_{w}}\times \widetilde{U_{w}}\times \R) = \emptyset$
is thus equivalent to
\begin{equation*}
\left(L_{\widetilde{\varphi}}+\ell\frac{\partial}{\partial a}\right)\cap
\widehat{\tau}\left(\widetilde{U_{w}}\times \widetilde{U_{w}}\times \R\right) = \emptyset
\end{equation*}
(definition of $\widehat{\tau}:J^{1}\R^{m+l}\times J^{1}\R^{m+l}\times \R \rightarrow  J^{1}\R^{2(m+l)+1}$
is given in Subsection~\ref{se:gfhc}).
Hence, given any point $(u,v,a)\in L_{\widetilde{\varphi}}+\ell\frac{\partial}{\partial a}$,
the corresponding $(x,X,\theta) = \widehat{\tau}^{-1}(u,v,a)$ verifies that either $x\not\in \widetilde{U_{w}}$
or $X\not\in \widetilde{U_{w}}$. This property descends to quotient:
$\left(x^{t_0},\partial_{x}F^{t_0}_{w},F^{t_0}_{w}\right)\in
\left(L_{\varphi}+\ell\frac{\partial}{\partial a}\right)\times 0_{T^{*}\mathds{S}^{k}}$
implies
that either $(q^{t_0}_{1},p^{t_0}_{1},z^{t_0})\not\in U_{w}$
or $(Q^{t_0}_{1},P^{t_0}_{1},Z^{t_0})\not\in U_{w}$.

Since $q^{t_0}_{2}=Q^{t_0}_{2}=w$, it follows that either $(q^{t_0},p^{t_0},z^{t_0})\not\in U$ 
or $(Q^{t_0},P^{t_0},Z^{t_0})\not\in U$. Since $\psi_{t_0}$ has its support in $U$, they both imply that
\begin{equation*}
\left(Q^{t_0},P^{t_0},Z^{t_0}\right)=\psi_{t_0}\left(q^{t_0},p^{t_0},z^{t_0}\right) 
= \left(q^{t_0},p^{t_0},z^{t_0}\right)\not\in U.
\end{equation*}
Hence, $\left(q^{t_0},p^{t_0},z^{t_0}\right)$ is outside the support of $\psi_{t_0}$,
thus, the associated point $(\mathfrak{q}^{t_0}_{1},w,\mathfrak{p}^{t_0},z^{t_0};\xi^{t_0})\in\Sigma_{F^{t_0}}$
is critical of value $F^{t_0}(\mathfrak{q}^{t_0}_{1},w,\mathfrak{p}^{t_0},z^{t_0};\xi^{t_0})=0$.
Thus, we have seen that $m^{t_0}:=\left(\mathfrak{q}^{t_0}_{1},\mathfrak{p}^{t_0},z^{t_0};\xi^{t_0},\eta^{t_0}\right)$
verifies
\begin{equation*}
\frac{\partial F^{t_0}_{w}}{\partial x} = \frac{\partial\widetilde{K}}{\partial x} = 0,\;
\frac{\partial F^{t_0}_{w}}{\partial \xi} = \frac{\partial\widetilde{K}}{\partial \eta} = 0 \text{ and }
F^{t_0}_{w} = 0,
\end{equation*}
so it is a critical point of $-\widetilde{K}$, as wished, with the same critical value
$-\widetilde{K} = F^{t_0}_{w}-\widetilde{K}$.

Let $t\mapsto m^{t}$ be the continuous path of critical value of $t\mapsto F^{t}_{w}-\widetilde{K}$ obtained by min-max.
It remains to show that $c^t=(F^{t}_{w}-\widetilde{K})(m^{t})$ is equal to $\ell$ in some neighborhood of $t_{0}$.
Since $\widetilde{K}$ does not depend on $\xi$,
\begin{equation*}
\frac{\partial F^{t}}{\partial \xi}\left(\mathfrak{q}^{t}_{1},w,\mathfrak{p}^{t},z^{t};\xi^{t}\right) = 0
\end{equation*}
so the point $\left(\mathfrak{q}^{t}_{1},w,\mathfrak{p}^{t},z^{t};\xi^{t}\right)$ remains inside the level set
$\Sigma_{F^{t}}$.
If $H:[0,1]\times (T^{*}(\R^{m}\times\T^l\times\T^k)\times S^1)\rightarrow \R$
denotes the compactly supported Hamiltonian map
associated to $(\psi_{t})$,
\begin{equation*}
\iota_{F^{t_0}}\left(\mathfrak{q}^{t_0}_{1},w,\mathfrak{p}^{t_0},z^{t_0};\xi^{t_0}\right)\in U^{c}\subset (\supp H)^{c},
\end{equation*}
and $(\supp H)^{c}$ is an open set so, for all $t$ in a small neighborhood of $t_{0}$,
$\iota_{F^t}\left(\mathfrak{q}^{t}_{1},w,\mathfrak{p}^{t},z^{t};\xi^{t}\right)\in (\supp H)^{c}$
thus $F^{t}\left(\mathfrak{q}^{t}_{1},w,\mathfrak{p}^{t},z^{t};\xi^{t}\right) = 0$ and
$\left(\mathfrak{q}^{t}_{1},\mathfrak{p}^{t},z^{t};\xi^{t}\right)$ remains a critical point of $F^{t}_{w}$.
Thus, in a small neighborhood of $t_0$, since $m^{t}$ is a critical point of $F^{t}_{w}-\widetilde{K}$
and $F^{t}_{w}$ (with a slight abuse of notation),
$t\mapsto\left(\mathfrak{q}^{t}_{1},\mathfrak{p}^{t}_{1},z^{t};\eta^{t}\right)$
is a continuous path of critical value for $K$. But $K$ is a Morse function in some neighborhood of
$\left(\mathfrak{q}^{t_0}_{1},\mathfrak{p}^{t_0}_{1},z^{t_0};\eta^{t_0}\right)$, thus
this continuous path is constant and $K\left(\mathfrak{q}^{t}_{1},\mathfrak{p}^{t}_{1},z^{t};\eta^{t}\right)\equiv-\ell$.

Finally, we have seen that, in some neighborhood of $t_0$,
$F^{t}\left(\mathfrak{q}^{t}_{1},w,\mathfrak{p}^{t},z^{t};\xi^{t}\right)\equiv 0$ and
$K\left(\mathfrak{q}^{t}_{1},\mathfrak{p}^{t}_{1},z^{t};\eta^{t}\right)\equiv-\ell$, thus
\begin{equation*}
c^{t}=F^{t}\left(\mathfrak{q}^{t}_{1},w,\mathfrak{p}^{t},z^{t};\xi^{t}\right) -
K\left(\mathfrak{q}^{t}_{1},\mathfrak{p}^{t}_{1},z^{t};\eta^{t}\right)\equiv\ell.
\end{equation*}

In particular, since $t\mapsto \left\lceil c^{t}\right\rceil$ is constant, one has
\begin{equation*}
\left\lceil c\left(\mu,F^{0}_{w}-\widetilde{K}\right)\right\rceil = 
\left\lceil c\left(\mu, F^{1}_{w}-\widetilde{K}\right)\right\rceil.
\end{equation*}
But $F^{0}_{w}(x;\xi)=Q(\xi)$ where $Q$ is a non-degenerated quadratic form,
so $F^{0}_{w}-\widetilde{K}$ is a stabilization of the generating function $-\widetilde{K}$,
thus Proposition~\ref{prop:minmax} (\ref{item:inv}) implies
$c(\mu,F^{0}_{w}-\widetilde{K}) = c(\mu,-\widetilde{K})$.
\end{proof}

\begin{proof}[Proof of Lemma~\ref{lem:cred}]
By Proposition~\ref{prop:redgf} (\ref{item:ineqred}),
\begin{equation*}
c(\mu\otimes 1, \psi) := c(\mu\otimes 1, F)
\leq c(\mu, F_{w}).
\end{equation*}
The triangular inequality of Proposition~\ref{prop:minmax} (\ref{item:additivity}) applied to
$\mu=\mu\smile 1$ gives us
\begin{equation*}
c\Big(\mu, F_{w}\Big) \leq 
c\left(\mu, F_{w} - \widetilde{K}\right) - c\left(1,-\widetilde{K}\right).
\end{equation*}
By Proposition~\ref{prop:minmax} (\ref{item:duality})
and Proposition~\ref{prop:redgf} (\ref{item:eqred}), we have
$- c(1,-\widetilde{K}) = c(\mu, \widetilde{K}) = c(\mu_1, K)$. Hence
\begin{equation*}
c\Big(\mu\otimes 1, \psi\Big) \leq 
c\left(\mu, F_{w} - \widetilde{K}\right) + c\Big(\mu_1, K\Big),
\end{equation*}
and thus,
\begin{equation*}
\left\lceil c\Big(\mu\otimes 1, \psi\Big)\right\rceil \leq 
\left\lceil c\Big(\mu, F_{w} - \widetilde{K}\Big)\right\rceil 
+\left\lceil c\Big(\mu_1, K\Big)\right\rceil.
\end{equation*}
According Lemma~\ref{lem:technical},
$\lceil c(\mu, F_{w} - \widetilde{K})\rceil = \lceil c(\mu, - \widetilde{K})\rceil$ so
\begin{equation*}
\left\lceil c\Big(\mu\otimes 1, \psi\Big)\right\rceil \leq 
\left\lceil c\Big(\mu, -\widetilde{K}\Big)\right\rceil 
+\left\lceil c\Big(\mu_1, K\Big)\right\rceil.
\end{equation*}
Since $K$ generates $\varphi$,
$\left\lceil c(\mu_1, -K)\right\rceil = 
\lceil c(\mu_1,\varphi^{-1})\rceil$,
according to Proposition~\ref{prop:ccont} (\ref{item:ccontminus}).
Thus Proposition~\ref{prop:redgf} (\ref{item:eqred}) gives
$\lceil c(\mu, -\widetilde{K})\rceil =
\lceil c(\mu_1,\varphi^{-1})\rceil$.
Finally, by definition, $c(\mu_1, K)=c(\mu_1,\varphi)$.
\end{proof}

\section{Contact camel theorem}\label{se:thm}

In this section, we will prove Theorem \ref{thm:ccamel}.
We work on the space $\R^{2n}\times S^1$ in dimension $2n+1>3$,
we denote by $q_1,p_1,\ldots,q_n,p_n,z$ coordinates on $\R^{2n}\times S^1$
so that the Liouville form is given by $\lambda = p\ud q := p_1\ud q_1 +\cdots +p_n\ud q_n$
and the standard contact form of $\R^{2n}\times S^1$ is $\alpha = p\ud q - \ud z$.
Let $\tau_{t}(x) = x+t\frac{\partial}{\partial q_{n}}$ be the contact Hamiltonian flow
of $\R^{2n}\times S^1$ associated to the contact Hamiltonian
$(t,x)\mapsto p_{n}$.

\begin{lem}\label{lem:construction}
Let $R$ and $r$ be two positive numbers and $B_R^{2n}\times S^{1}\subset P_-\times S^{1}$.
If there exists a contact isotopy $(\phi_t)$ of $(\R^{2n}\times S^1,\alpha)$
supported in $[-c/8,c/8]^{2n}\times S^1$ for some $c>0$
such that $\phi_0=\id$, $\phi_1(B_R^{2n}\times S^{1})\subset P_+\times S^{1}$
and $\phi_t(B_R^{2n}\times S^{1})\subset (\R^{2n}\setminus P_r)\times S^{1}$ for all $t\in[0,1]$,
then there exists a smooth family of contact isotopy $s\mapsto (\psi_{t}^{s})$ with
$\psi_{t}^{s}\in\Cont(\R^{2n}\times S^1)$ and $\psi^{s}_{0}=\id$
associated to a smooth family of contact Hamiltonians $s\mapsto (H_{t}^{s})$
supported in $[-c/8,c/8]^{2n-2}\times\R^2\times S^1$
, such that,
for all $s\in [0,1]$, all $t\in\R$ and all $x\in\R^{2n}\times S^1$,
\begin{equation}\label{eq:cpsitran}
\psi^{s}_{c}(x) = x + c\frac{\partial}{\partial q_{n}},
\end{equation}
\begin{equation}\label{eq:cpsiqper}
\psi^{s}_{t}\left(x + c\frac{\partial}{\partial q_{n}}\right) =
\psi^{s}_{t}(x) + c\frac{\partial}{\partial q_{n}},
\qquad\forall t\in\R
\end{equation}
\begin{equation}\label{eq:cpsitper}
\psi^{s}_{t+c} = \psi^{s}_{c}\circ\psi^{s}_{t}.
\end{equation}
Moreover, for all $t\in\R$, $\psi^{0}_{t} = \tau_{t}$ whereas $\psi_{t} := \psi^{1}_{t}$ satisfies
\begin{equation}\label{eq:cpsiBr}
\psi_{t}\left(B^{2n}_{R}\times S^{1}\right) \subset
\left(\R^{2n}\setminus \bigcup_{k\in\Z} \left(P_{r} + kc\frac{\partial}{\partial q_{n}} \right)\right)
\times S^{1},
\qquad\forall t\in \R.
\end{equation}
\end{lem}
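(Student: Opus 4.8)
The plan is to build $\psi^{s}_{t}$ out of the given $\phi_{t}$ in three moves: periodize $\phi_{t}$ in the $q_{n}$-direction, splice the periodized isotopy with the translation flow $\tau_{t}$ so that the time-$c$ map becomes \emph{exactly} $\tau_{c}$, and finally deform the result back to $\tau_{t}$.

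\emph{Periodization.} First, the hypotheses already force $B_{R}^{2n}\subset[-c/8,c/8]^{2n}$: outside $\supp\phi_{t}$ the ball is fixed, and $B_{R}^{2n}\setminus[-c/8,c/8]^{2n}\subset P_{-}$ cannot land in $P_{+}$. Let $G_{t}$ be the contact Hamiltonian of $\phi_{t}$; after reparametrizing in $t$ and pushing a little further into $P_{+}$, I may assume $G_{t}\equiv0$ for $t$ near $0$ and near $1$, $\supp G_{t}\subset[-c/8,c/8]^{2n}\times S^{1}$, and $\overline{\phi_{1}(B_{R}^{2n})}\subset\{0<q_{n}<c/8\}$. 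Since $\supp G_{t}$ has $q_{n}$-width $c/4<c$, the translates $G_{t}(\cdot-kc\tfrac{\partial}{\partial q_{n}})$, $k\in\Z$, have pairwise disjoint supports, so $\bar G_{t}:=\sum_{k\in\Z}G_{t}(\cdot-kc\tfrac{\partial}{\partial q_{n}})$ is a smooth contact Hamiltonian, $c$-periodic in $q_{n}$, whose flow $\bar\phi_{t}$ commutes with $\tau_{c}$ and equals $\phi_{t}$ on $[-c/8,c/8]^{2n}\times S^{1}$. Writing
\[
\mathcal{B}:=\bigcup_{k\in\Z}\Big(B_{R}^{2n}\times S^{1}+kc\tfrac{\partial}{\partial q_{n}}\Big),\qquad
\mathcal{H}:=\bigcup_{k\in\Z}\Big(P_{r}+kc\tfrac{\partial}{\partial q_{n}}\Big)\times S^{1},
\]
one checks $\bar\phi_{t}(\mathcal{B})\cap\mathcal{H}=\emptyset$ for all $t\in[0,1]$: inside the box only the $k=0$ hole meets $B_{R}^{2n}\times S^{1}$ (where the hypothesis on $\phi_{t}$ applies), and the general copy follows by translating, using that $\bar\phi_{t}$, $\mathcal{B}$, $\mathcal{H}$ are all invariant under $\tau_{kc}$.

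\emph{The isotopy $\psi_{t}$.} On $[0,c]$ I would take $\psi_{t}$ to be a concatenation, suitably reparametrized and flat at the junctions, of: (a) the periodized push $\bar\phi$, which carries $\mathcal{B}$ across the hyperplanes $\{q_{n}\in c\Z\}$ through the round windows $B_{r}^{2n}(kc\tfrac{\partial}{\partial q_{n}})$; (b) segments of the translation flow $\tau_{t}$, run only while the ball copies lie strictly inside the open slabs $\{jc<q_{n}<(j+1)c\}$, so that no hyperplane is crossed; and (c) a final stretch (a backward run of $\bar\phi$ dressed by translations) which, after factoring out a total ambient translation equal to $c$, undoes the push, so that $\psi_{c}=\tau_{c}$. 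The reparametrizing functions must be chosen so that every crossing of a hyperplane $\{q_{n}=jc\}$ by a ball copy happens while the total ambient translation is an integer multiple of $c$ --- so that the window lattice is in register with the moving configuration --- and so that the ambient translation changes only when the ball copies sit strictly inside the open slabs. Then extend $\psi_{t}$ to $t\in\R$ by $\psi_{t+c}:=\tau_{c}\circ\psi_{t}$ (consistent since $\psi_{c}=\tau_{c}$): this is a smooth contact flow whose Hamiltonian $H_{t}$ is $c$-periodic in $q_{n}$ and, since $\psi_{t}$ commutes with the strict contactomorphism $\tau_{c}$, also $c$-periodic in $t$. Assembled from $\bar G_{t}$ and a smooth cutoff of $p_{n}$ (equal to $p_{n}$ wherever the symplectic coordinates other than $(q_{n},p_{n})$ lie in $[-c/8,c/8]$, which contains every ball copy, and where the corresponding field is just $\tfrac{\partial}{\partial q_{n}}$), $H_{t}$ is supported in $[-c/8,c/8]^{2n-2}\times\R^{2}\times S^{1}$ up to an inessential enlargement of the constant, and generates $\tau_{t}$ on that region. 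Identities (\ref{eq:cpsitran})--(\ref{eq:cpsitper}) hold by construction; $\psi_{t}(\mathcal{B})\cap\mathcal{H}=\emptyset$ for $t\in[0,c]$ by the design of (a)--(c), hence for all $t\in\R$ by the $\tau_{c}$-periodicity of $\mathcal{B}$ and $\mathcal{H}$; restricting to the copy $B_{R}^{2n}\times S^{1}\subset\mathcal{B}$ yields (\ref{eq:cpsiBr}).

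\emph{The $s$-family, and the hard point.} The loop $\eta_{t}:=\tau_{-t}\circ\psi_{t}$ (with $\eta_{0}=\eta_{c}=\id$, $c$-periodic in $q_{n}$, compactly supported modulo periodicity) is contractible among such loops --- the defining homotopy $u\mapsto\bar\phi_{u}$ of the push provides the contraction --- so one picks a smooth family of such loops $s\mapsto\eta^{s}_{t}$ with $\eta^{0}\equiv\id$, $\eta^{1}=\eta$, and sets $\psi^{s}_{t}:=\tau_{t}\circ\eta^{s}_{t}$ on $[0,c]$, $\psi^{s}_{t+c}:=\tau_{c}\circ\psi^{s}_{t}$ elsewhere; then $\psi^{0}_{t}=\tau_{t}$ (on the region above), $\psi^{1}_{t}=\psi_{t}$, each $\psi^{s}$ obeys (\ref{eq:cpsitran})--(\ref{eq:cpsitper}) for the same reasons, and the contact Hamiltonians $H^{s}_{t}$ form a smooth family supported in the prescribed region. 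The main obstacle is the bookkeeping in the middle step: the naive ``push, then translate by $c$'' fails, because raising a ball that sits just above $\{q_{n}=0\}$ past $\{q_{n}=c\}$ forces it to straddle $\{q_{n}=c\}$ outside the window $B_{r}^{2n}(c\tfrac{\partial}{\partial q_{n}})$; one must instead effect the passage slab by slab, exploiting that translating the whole $c$-periodic configuration by $c$ brings it back into register with the lattice of windows, and showing that reparametrizing functions with the listed properties exist is the technical heart of the lemma.
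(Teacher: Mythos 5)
Your proposal is correct and follows essentially the same route as the paper: periodize the contact Hamiltonian in the $q_n$-direction, splice the push with translation segments and a translated inverse push so that $\psi^{s}_{c}=\tau_{c}$ exactly, extend by $\psi^{s}_{t+c}=\tau_c\circ\psi^{s}_t$, and obtain the $s$-family by dialing the amount of push (your ``contraction of the loop $\eta_t$'' is exactly this once unpacked). The bookkeeping you single out as the technical heart is dissolved in the paper by the explicit choice $\psi^{s}_{c}=\tau_{3c/4}\circ\widetilde{\phi}_{s}\circ\tau_{c/4}\circ\phi'_{s}$ with $\widetilde{\phi}_{s}=\tau_{c/4}\circ\phi_{s}^{-1}\circ\tau_{c/4}^{-1}$: at $s=1$ the pushed ball lies in $\{0<q_n\le c/8\}$, the first translation keeps it in $\{0<q_n\le 3c/8\}$, the un-push returns it to the round translate $\tau_{c/4}(B_R^{2n})$ with $q_n\in[c/8,c/4)$, and the final translation lands it in $\{7c/8\le q_n<c\}$, so every hyperplane crossing occurs during a push phase through a window and no further registration argument is needed.
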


\begin{proof}
Assume there exists such a $(\phi_{t})$.
Let $K_t:\R\times(\R^{2n}\times S^1)\rightarrow\R$ be the compactly supported contact Hamiltonian associated
to $(\phi_{t})$.
By hypothesis, $K_{t}$ is supported in $[-c/8,c/8]^{2n}$, thus one can define
its $c\frac{\partial}{\partial q_{n}}$-periodic extension $K_{t}':\R\times(\R^{2n}\times S^1)\rightarrow\R$
and the associated contact isotopy $(\phi_{t}')$.
The contactomorphism $\phi_{t}':\R^{2n}\times S^1\rightarrow\R^{2n}\times S^1$ satisfies
$\phi_{t}'\left(x + c\frac{\partial}{\partial q_{n}}\right) = \phi_{t}'(x) + c\frac{\partial}{\partial q_{n}}$.

\begin{figure}
\begin{small}
\def\svgwidth{\textwidth}
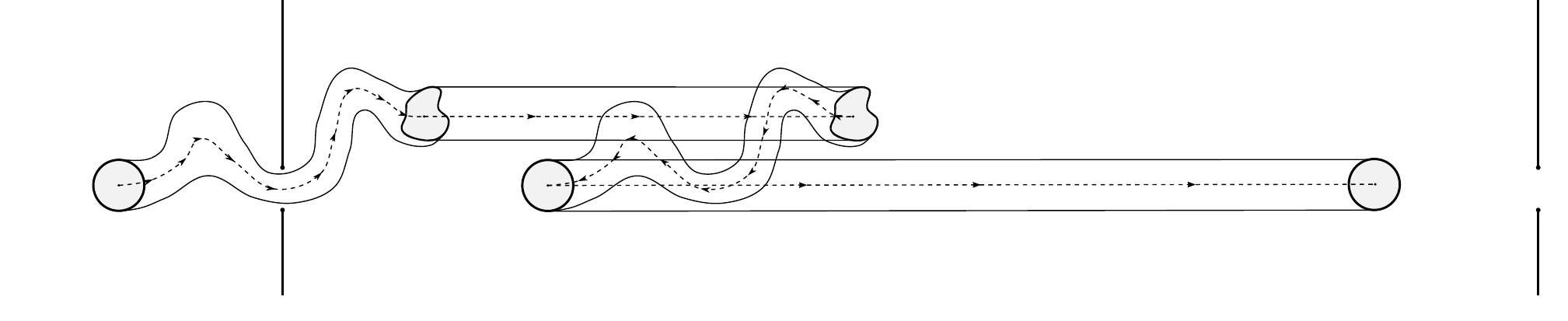
\end{small}
\caption{Construction of $\psi^{1}$.}
\label{fig:contruction}
\end{figure}

For all $s\in[0,1]$, consider the contact isotopy $(\psi^{s}_{t})$ with
$\psi^{s}_{t}\in\Cont(\R^{2n}\times S^1)$ and $\psi^{s}_{0}=\id$
defined as follow
(look also at Figure~\ref{fig:contruction}).
Given $x\in \R^{2n}$, trajectory $\gamma(t)=\psi^{s}_{t}(x)$ first follows $t\mapsto \phi_{t}'(x)$
from $t=0$ to $t=s$. Then $\gamma$ follows $t\mapsto\tau_{t}(\phi_{s}'(x))$ from $t=0$ to $t=1/4$.
Then $t\mapsto\widetilde{\phi}_{t}'(\tau_{1/4}\circ\phi_{s}'(x))$ from $t=0$ to $t=s$,
where $(\widetilde{\phi}_{t})=(\tau_{c/4}\circ\phi_{t}^{-1}\circ\tau_{c/4}^{-1})$
is the contact Hamiltonian flow associated to the translated contact Hamiltonian application
$\widetilde{K}_{t}' = -K_{s-t}'\circ\tau_{-c/4}$. Finally, $\gamma$ follows
$t\mapsto \tau_{t}(\widetilde{\phi}_{s}'\circ\tau_{1/4}\circ\phi_{s}'(x))$ from $t=0$ to $t=3/4$.
We normalize time such that $s\mapsto\psi_{t}^{s}$ gives an isotopy
of smooth contact Hamiltonian flows of
$c\frac{\partial}{\partial t}$-periodic contact Hamitonians $H^{s}_{t}$.

Identity (\ref{eq:cpsitran}) comes from the fact that
$\psi^{s}_{c}=\tau_{3c/4}\circ\widetilde{\phi}_{s}\circ\tau_{c/4}\circ\phi_{s}$
and, by definition of $(\widetilde{\phi}_{t})$,
$\widetilde{\phi}_{s}=\tau_{c/4}\circ\phi_{s}^{-1}\circ\tau_{c/4}^{-1}$.
Identity (\ref{eq:cpsiqper}) comes from the fact that contactomorphism $\psi^{s}_{t}$ is
a composition of $c\frac{\partial}{\partial q_n}$-equivariant contactomorphisms.
Identity (\ref{eq:cpsitper}) is implied by $c\frac{\partial}{\partial t}$-periodicity
of Hamiltonian $H^{s}_{t}$.
Inclusion (\ref{eq:cpsiBr}) comes from the hypothesis on the contact isotopy $(\phi_{t})$.
\end{proof}

Let $r,R>0$ be such that there exists a positive integer $\ell$ satisfying $\pi r^2 < \ell < \pi R^2$
and let $B_R^{2n}\times S^{1}\subset P_-\times S^{1}$.
Suppose by contradiction that
there exists a contact isotopy $(\phi_t)$ of $(\R^{2n}\times S^1,\alpha)$
supported in $[-c/8,c/8]^{2n}\times S^1$ for some $c>0$
such that $\phi_0=\id$, $\phi_1(B_R^{2n}\times S^{1})\subset P_+\times S^{1}$
and $\phi_t(B_R^{2n}\times S^{1})\subset (\R^{2n}\setminus P_r)\times S^{1}$ for all $t\in[0,1]$.
In order to prove Theorem~\ref{thm:ccamel}, it is enough to consider $r\in (\ell-1,\ell)$.
Consider the family of contact isotopy $s\mapsto (\psi_{t}^{s})$ given by Lemma~\ref{lem:construction}
and denote by $(H_{t}^{s})$ the associated family of Hamiltonian 
supported in $[-c/8,c/8]^{2n}\times\R^{2}\times S^1$.
We define $\lambda_{t}^{s}:\R^{2n}\times S^1\rightarrow\R$ by 
$(\psi_{t}^{s})^{*}\alpha=\lambda_{t}^{s}\alpha$.
Let us consider:
\begin{equation*}
\overline{\Psi^{s}}:
\R\times\R\times\R^{2n+1} \rightarrow \R\times\R\times\R^{2n+1},
\end{equation*}
\begin{equation*}
\overline{\Psi^{s}}(t,h,x) = (t,\lambda_{t}^{s}(x)h+H^{s}_{t}\circ\psi^{s}_{t}(x),\psi^{s}_{t}(x)).
\end{equation*}
According to (\ref{eq:cpsitran}), (\ref{eq:cpsiqper}) and (\ref{eq:cpsitper}),
for all $(t,h,x)\in\R\times\R\times\R^{2n+1}$,
\begin{equation*}
\forall k,l\in\Z,\quad
\overline{\Psi^{s}}\left(t+lc,h,x + kc\frac{\partial}{\partial q_{n}}\right) =
\overline{\Psi^{s}}(t,h,x) + lc\frac{\partial}{\partial t} + (k+l)c\frac{\partial}{\partial q_{n}}.
\end{equation*}

Thus $\overline{\Psi^{s}}$ descends to a map
$\Psi^{s}:T^{*}C\times T^{*}(\R^{n-1}\times C)\times S^{1}\rightarrow
T^{*}C\times T^{*}(\R^{n-1}\times C)\times S^{1}$ where $C:=\R/c\Z$.

\begin{lem}
The family $s\mapsto\Psi^{s}$ is a contact isotopy of the contact manifold
$(T^{*}C\times T^{*}(\R^{n-1}\times C)\times S^{1},\ker (\ud z - p\ud q + h\ud t))$.
\end{lem}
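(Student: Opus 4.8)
The plan is to verify the contactomorphism property upstairs, for the map $\overline{\Psi^s}$ on $\R\times\R\times\R^{2n+1}$, and then to transport the resulting conformal identity down through the quotient, whose structure has already been arranged above. The key preliminary remark is a matter of bookkeeping: the contact form in the statement equals $\ud z-p\ud q+h\ud t=h\ud t-\alpha$, where $\alpha=p\ud q-\ud z$ is the standard contact form of $\R^{2n}\times S^1$ pulled back to the product; write $\beta:=h\ud t-\alpha$ on $\R\times\R\times\R^{2n+1}$. Beyond this, I will use only two facts about the data produced by Lemma~\ref{lem:construction}: the defining relation $(\psi^s_t)^*\alpha=\lambda^s_t\alpha$ with $\lambda^s_t>0$ everywhere (the isotopy starts at $\id$), and that $H^s_t$ is the contact Hamiltonian of $(\psi^s_t)$, that is $H^s_t=\alpha(X^s_t)$ for the time-dependent contact vector field $X^s_t:=\frac{\partial}{\partial t}\psi^s_t\circ(\psi^s_t)^{-1}$, all quantities being extended to every $t\in\R$ as in that lemma.

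The heart of the argument is then a single computation of $(\overline{\Psi^s})^*\beta$. Write $F\colon\R\times\R\times\R^{2n+1}\to\R^{2n+1}$, $F(t,h,x)=\psi^s_t(x)$, for the last component of $\overline{\Psi^s}$. Splitting the differential of $F$ into its $\frac{\partial}{\partial t}$-direction and its $x$-directions gives $F^*\alpha=(H^s_t\circ\psi^s_t)\ud t+\lambda^s_t\alpha$. Since the $t$-component of $\overline{\Psi^s}$ is $t$ and its $h$-component is $\lambda^s_t h+H^s_t\circ\psi^s_t$, the $\ud t$-term coming from $F^*\alpha$ is cancelled exactly by the one coming from the $h$-component, leaving $(\overline{\Psi^s})^*\beta=\lambda^s_t\beta$. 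As $\lambda^s_t>0$, this shows $\overline{\Psi^s}$ preserves $\ker\beta$; it is moreover a diffeomorphism of $\R\times\R\times\R^{2n+1}$, since its $t$-component is the identity, $h\mapsto\lambda^s_t(x)h+H^s_t(\psi^s_t(x))$ is an affine isomorphism of $\R$ for each fixed $(t,x)$, and $\psi^s_t$ is a diffeomorphism in $x$ — which also yields an explicit inverse.

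Finally I would descend everything. The form $\beta$ is invariant under translation by $\frac{\partial}{\partial t}$, $\frac{\partial}{\partial q_n}$ and $\frac{\partial}{\partial z}$, so it descends to the $1$-form $\ud z-p\ud q+h\ud t$ on $T^{*}C\times T^{*}(\R^{n-1}\times C)\times S^1$, which is a (standard) contact form there. By the equivariance identity for $\overline{\Psi^s}$ recorded just above the statement, $\overline{\Psi^s}$ intertwines the deck transformations and hence descends to $\Psi^s$; and $\lambda^s_t$ descends to a positive function on the quotient, using $(\psi^s_c)^*\alpha=\alpha$ together with \eqref{eq:cpsitran}, \eqref{eq:cpsiqper} and \eqref{eq:cpsitper}. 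Hence $(\Psi^s)^*(\ud z-p\ud q+h\ud t)=\lambda^s_t\cdot(\ud z-p\ud q+h\ud t)$, so each $\Psi^s$ is a contactomorphism; and $s\mapsto\Psi^s$ is smooth because $s\mapsto(\psi^s_t)$ and $s\mapsto H^s_t$ are smooth by Lemma~\ref{lem:construction}, while $\lambda^s_t=-((\psi^s_t)^*\alpha)(\frac{\partial}{\partial z})$ depends smoothly on these. The only step that genuinely requires care is the sign bookkeeping in the middle paragraph: recognising $\ud z-p\ud q+h\ud t$ as $h\ud t-\alpha$, and matching the conventions relating $\lambda^s_t$, $H^s_t$ and $X^s_t$ so that the two $\ud t$-contributions cancel. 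Everything else — the diffeomorphism property, the descent through the lattice, and the smoothness in $s$ — is routine once the periodicity relations of Lemma~\ref{lem:construction} are in hand.
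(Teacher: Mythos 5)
Your proof is correct and takes essentially the same route as the paper: both hinge on the single pullback computation $(\Psi^s)^*(\ud z - p\ud q + h\ud t) = \lambda^s_t(\ud z - p\ud q + h\ud t)$, using the defining relation for $\lambda^s_t$ and the identity $P_t\dot Q_t - \dot Z_t = H^s_t\circ\psi^s_t$ to cancel the spurious $\ud t$-term. You reorganize slightly (via $\beta = h\ud t - \alpha$ and the decomposition of $F^*\alpha$) and spell out the diffeomorphism property, the descent through the lattice, and the smoothness in $s$, all of which the paper leaves implicit.
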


\begin{proof}
We write
\begin{equation*}
\psi^{s}_t(q,p,z) = (Q_{t}(q,p,z),P_{t}(q,p,z),Z_{t}(q,p,z)),
\end{equation*}
since
$\ud Z_{t} - P_{t}\ud Q_{t} = (\psi_{t}^{s})^{*}(\ud z - p\ud q) = \lambda_{t}^{s}(\ud z - p\ud q)$,
we have
\begin{equation*}
\left(\Psi^{s}\right)^{*}(\ud z - p\ud q + h\ud t) = 
\ud Z_{t} - P_{t}\ud Q_{t} + \dot{Z}_{t}\ud t - P_{t}\dot{Q}_{t}\ud t + (\lambda_{t}^{s}h+H^{s}_{t}\circ\psi^{s}_{t})\ud t
\end{equation*}
\begin{equation*}
= \lambda_{t}^{s}(\ud z - p\ud q + h\ud t) + \left(\dot{Z}_{t} - P_{t}\dot{Q}_{t}+H^{s}_{t}\circ\psi^{s}_{t}\right)\ud t.
\end{equation*}
But,
since $H^{s}_{t}$ is the contact Hamiltonian of the isotopy $\psi_{t}^{s}$
supported in $[-c/8,c/8]^{2n}\times\R^{2}\times S^1$,
$P_{t}\dot{Q}_{t} - \dot{Z}_{t} = H^{s}_{t}\circ\psi_{t}^{s}$. Finally,
\begin{equation*}
\left(\Psi^{s}\right)^{*}(\ud z - p\ud q + h\ud t) = \lambda_{t}^{s}(\ud z - p\ud q + h\ud t).
\end{equation*}

\end{proof}

For technical reasons, we replace $T^{*}\R^n\times S^1$ by its quotient
$T^{*}(\R^{n-1}\times C)\times S^1$ and consider our $B^{2n}_R\times S^1$
inside this quotient (since $c$ can be taken large while $R$ is fixed, this identification is well defined).
Let us consider the linear symplectic map:
\begin{equation*}
L : T^{*}C\times T^{*}(\R^{n-1}\times C)\rightarrow T^{*}C\times T^{*}(\R^{n-1}\times C),
\end{equation*}
\begin{equation*}
L(t,h,x,q_n,p_{n}) = (q_n-t,-h,x,q_n,p_{n}-h)
\end{equation*}
and denote by $\widehat{L}=L\times \text{id}$ the associated contactomorphism of
$T^{*}C\times T^{*}(\R^{n-1}\times C)\times S^1$.
Let us consider 
\begin{equation*}
\mathcal{U}:=\widehat{L}\left(\Psi^{1}\left(T^{*}C\times B^{2n}_{R}\times S^{1}\right)\right).
\end{equation*}
We compactify the space $T^{*}C\times T^{*}(\R^{n-1}\times C)$
as
\begin{equation*}
C\times\mathds{S}^1\times\mathds{S}^{2n-2}\times C\times\mathds{S}^1
\simeq \mathds{S}^{2n-2}\times\T^{2}\times C^{2}.
\end{equation*}
Let $\mu$ and $\ud z$ be the orientation class of $\mathds{S}^{2n-2}\times\T^{2}$ and $S^1$ respectively and
$\ud q_n$ and $\ud t$ be the canonical base of $H^{1}(C^2)$.

\begin{lem}\label{lem:capineq}
One has the following capacity inequality:
\begin{equation*}
c\left(\mu\otimes\ud t\otimes\ud z, \mathcal{U}\right)
\geq \left\lceil\pi R^2 \right\rceil.
\end{equation*}
\end{lem}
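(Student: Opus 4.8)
The plan is to combine the invariance of the contact capacities along contact isotopies (Proposition~\ref{prop:ccU}~(\ref{item:ccUinv})), which lets us replace $\Psi^1$ by the elementary map $\Psi^0$, with the linear symplectic invariance of Subsection~\ref{se:sympinv}, which transports the distinguished cohomology class during the computation.

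First, $s\mapsto\widehat L\circ\Psi^s$ is a contact isotopy of $T^{*}(\R^{n-1}\times\T^2)\times S^1$ (it is not based at the identity and not compactly supported, both harmless for Proposition~\ref{prop:ccU}~(\ref{item:ccUinv})); applying it to $T^{*}C\times B_R^{2n}\times S^1$ and passing to bounded exhausting subsets gives $c(\mu\otimes\ud t\otimes\ud z,\mathcal U)=c(\mu\otimes\ud t\otimes\ud z,\mathcal U_0)$ with $\mathcal U_0:=\widehat L(\Psi^0(T^{*}C\times B_R^{2n}\times S^1))$. Second, one computes $\mathcal U_0$ explicitly: since $\psi^0_t=\tau_t$ we have $\lambda^0_t\equiv1$ and $H^0_t\equiv p_n$, hence $\overline{\Psi^0}(t,h,x)=(t,h+p_n,\tau_t(x))$, and composing with $\widehat L$ gives the \emph{linear} contactomorphism
\[
\widehat L\circ\Psi^0(t,h;q',q_n;p',p_n;z)=(q_n,\,-h-p_n;\,q',q_n+t;\,p',-h;\,z).
\]
This map has the form $\xi\times\id_{S^1}$, where $\xi$ is a linear symplectomorphism of $T^{*}C\times T^{*}(\R^{n-1}\times C)=T^{*}(\R^{n-1}\times\T^2)$ of the product type demanded in Subsection~\ref{se:sympinv}: on the base torus $\T^2_{(t,q_n)}$ it is the automorphism $(t,q_n)\mapsto(q_n,t+q_n)$, and on the remaining coordinates $(q',p',h,p_n)\mapsto(q',p',-h-p_n,-h)$. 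Hence $\mathcal U_0=\xi(T^{*}C\times B_R^{2n})\times S^1$.

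I would then finish with a chain of reductions. Peeling off the $S^1$ factor by Proposition~\ref{prop:ccU}~(\ref{item:ccUcomp}) gives $c(\mu\otimes\ud t\otimes\ud z,\mathcal U_0)=\lceil c(\mu\otimes\ud t,\xi(T^{*}C\times B_R^{2n}))\rceil$. Applying the linear invariance Proposition~\ref{prop:sympinv} to $\xi$, together with $\xi_2^{*}\ud t=\ud q_n$ (as $\xi_2(t,q_n)=(q_n,t+q_n)$), rewrites this as $\lceil c(\mu\otimes\ud q_n,T^{*}C\times B_R^{2n})\rceil$. Finally $T^{*}C\times B_R^{2n}=T^{*}C_t\times B_R^{2n}$ with $B_R^{2n}$ a round ball in $T^{*}(\R^{n-1}\times C_{q_n})$ and the whole $T^{*}C_t$ factor free; since the class $\mu\otimes\ud q_n$ carries the unit only in the $C_t$ slot, monotonicity against $B_R^{2n}\times W\times C_t$ ($W$ a small neighbourhood of $0$ in the $h$-line), the reduction inequality Proposition~\ref{prop:cU}~(\ref{item:cUred}) and the normalization Proposition~\ref{prop:cU}~(\ref{item:cUB}) give $c(\mu\otimes\ud q_n,T^{*}C\times B_R^{2n})\ge c(\mu',B_R^{2n})=\pi R^2$, where $\mu'$ is the orientation class of the compactification of $T^{*}(\R^{n-1}\times C_{q_n})$. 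Taking ceilings yields the claim.

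The delicate step is the explicit computation: one must check that $\widehat L\circ\Psi^0$ is genuinely linear and of the product form $\xi_1\times\xi_2$ required by Proposition~\ref{prop:sympinv} (in particular that the base-torus block is an honest automorphism of $\T^2$), and then keep careful track of the cohomology classes. The crucial — and at first sight surprising — phenomenon is that $\xi_2$ interchanges the class $\ud t$ of the statement with the class $\ud q_n$, which is precisely the one that, after reduction to $B_R^{2n}$, produces the factor $\pi R^2$. Since $\Psi^0$ (whose spatial part is the loop $t\mapsto\tau_t$ winding once around the $q_n$-circle) is not isotopic to the identity, it is the linear invariance of Subsection~\ref{se:sympinv}, rather than plain isotopy invariance, that legitimizes this step.
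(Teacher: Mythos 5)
Your proposal is correct and follows essentially the same route as the paper: isotopy invariance to replace $\Psi^1$ by $\Psi^0$, the explicit computation of $\widehat{L}\circ\Psi^0$ as a product-type linear map (the paper writes it as $L\circ\Phi^0$ with $\Phi^0(t,h,x,q_n,p_n)=(t,h+p_n,x,q_n+t,p_n)$, and the induced torus automorphism $A(t,q_n)=(q_n,q_n+t)$ pulling $\ud t$ back to $\ud q_n$ is exactly your $\xi_2$), then Proposition~\ref{prop:ccU}~(\ref{item:ccUcomp}), Proposition~\ref{prop:sympinv}, the rewriting $\mu\otimes\ud q_n=\mu_1\otimes\ud h\otimes 1$, and finally Proposition~\ref{prop:cU}~(\ref{item:cUred}) and (\ref{item:cUB}). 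Your closing observation that the nontrivial action of $\xi_2$ on $H^1(\T^2)$ is what forces the use of linear invariance rather than isotopy invariance is exactly the point of Subsection~\ref{se:sympinv}.
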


\begin{proof}
Let $\alpha := \mu\otimes\ud t$.
Since $s\mapsto \widehat{L}\circ\Psi^{s}$ is a contact isotopy,
Proposition~\ref{prop:ccU} (\ref{item:ccUinv}) implies
\begin{equation*}
c\left(\alpha\otimes\ud z, \mathcal{U}\right) =
c\left(\alpha\otimes\ud z,
\widehat{L}\left(\Psi^{0}\left(T^{*}C\times B^{2n}_{R}\times S^{1}\right)\right)\right).
\end{equation*}

But $\widehat{L}\left(\Psi^{0}\left(T^{*}C\times B^{2n}_{R}\times S^{1}\right)\right)=
L(\Phi^{0}\left(T^{*}C\times B^{2n}_{R}\right))\times S^{1}$
where $\Phi^{0}:T^{*}C\times T^{*}(\R^{n-1}\times C)\rightarrow T^{*}C\times T^{*}(\R^{n-1}\times C)$
is the linear symplectic map:
\begin{equation*}
\Phi^{0}(t,h,x,q_n,p_{n}) =
(t, h + p_{n},x,q_n+t,p_{n}).
\end{equation*}
Thus, using Proposition~\ref{prop:ccU} (\ref{item:ccUcomp}),
\begin{equation*}
c\left(\alpha\otimes\ud z, \mathcal{U}\right) =
\left\lceil c\left(\alpha,
L\left(\Phi^{0}\left(T^{*}C\times B^{2n}_{R}\right)\right) \right)\right\rceil.
\end{equation*}
In order to conclude, let us show that
\begin{equation*}
c\left(\mu\otimes\ud t,
L\left(\Phi^{0}\left(T^{*}C\times B^{2n}_{R}\right)\right) \right) \geq \pi R^2.
\end{equation*}
By the linear symplectic invariance stated in Proposition~\ref{prop:sympinv},
\begin{equation*}
c\left(\mu\otimes\ud t,
L\circ\Phi^{0}\left(T^{*}C\times B^{2n}_{R}\right) \right) =
c\left(\mu\otimes A^{*}\ud t,
T^{*}C\times B^{2n}_{R}\right),
\end{equation*}
where $A:C^2\rightarrow C^2$ is the linear map $A(t,q_n) = (q_n,q_n+t)$.
We have $A^{*}\ud t = \ud q_n$, therefore
\begin{equation*}
c\left(\mu\otimes\ud t,
L\circ\Phi^{0}\left(T^{*}C\times B^{2n}_{R}\right) \right) =
c\left(\mu\otimes \ud q_n,
T^{*}C\times B^{2n}_{R}\right).
\end{equation*}
The cohomology class $\mu\otimes\ud q_n$ can be seen as the tensor product
of the orientation class $\mu_{1}$ of
the compactification $\mathds{S}^{2n-2}\times\mathds{S}^1\times C$ of $T^{*}(\R^{n-1}\times C)$
by the orientation class $\ud h$ of the compactification $\mathds{S}^1$ of the $h$-coordinate
and the generator $1$ of $H^{0}(C)$ (for the $t$-coordinate).
Indeed, $\ud q_n \in H^{1}(C^2)$ can be identify to
$\ud q_n\otimes 1\in H^{1}(C)\otimes H^{0}(C)$
(writing also $\ud q_n$ for the orientation class of $C$ by a slight abuse of notation).
Hence,
if $\mu_{\mathds{S}^{2n-2}}$ and $\ud p_n$ are the orientation classes of $\mathds{S}^{2n-2}$
and of the compactification $\mathds{S}^1$ of the $p_n$-coordinate respectively, then
\begin{equation*}
\mu\otimes\ud q_n = (\mu_{\mathds{S}^{2n-2}}\otimes\ud h\otimes\ud p_n)\otimes(\ud q_n\otimes 1)
= (\mu_{\mathds{S}^{2n-2}}\otimes\ud q_n\otimes\ud p_n)\otimes\ud h\otimes 1
= \mu_{1}\otimes\ud h\otimes 1.
\end{equation*}
Now, according to Proposition~\ref{prop:cU} (\ref{item:cUred}),
\begin{equation*}
c\left(1\otimes\ud h\otimes\mu_{1},C\times\R\times B^{2n}_{R}\right)
\geq c\left(\mu_{1},B^{2n}_{R}\right).
\end{equation*}
Finally, Proposition~\ref{prop:cU} (\ref{item:cUB}) implies
\begin{equation*}
c\left(\mu\otimes\ud t,
L\left(\Phi^{0}\left(T^{*}C\times B^{2n}_{R}\right)\right) \right)
\geq c\left(\mu_{1},B^{2n}_{R}\right) = \pi R^2.
\end{equation*}
\end{proof}

\begin{proof}[Proof of Theorem~\ref{thm:ccamel}]
Let us apply Lemma~\ref{lem:cred} with
$m=n-1$, $l=k=1$ and the orientation class $\mu\otimes\ud t\otimes\ud z$
to the exhaustive sequence of open bounded subsets
defined by 
$U^{(k)} := \widehat{L}\left(\Psi^{1}\left(C\times(-k,k)\times B^{2n}_{R}\right)\right)$;
taking the supremum among $k>0$, we find:
\begin{equation}\label{eq:applem}
c\left(\mu\otimes\ud t\otimes\ud z, \mathcal{U}\right)
\leq 
\gamma\left(\mathcal{U}_{0}\right),
\end{equation}
where $\mathcal{U}_0\subset T^{*}C\times T^{*}\R^{n-1}\times S^1$ is the reduction of $\mathcal{U}$ at $q_n=0$.
Now $\Psi^{1}\left(T^{*}C\times B^{2n}_{R}\times S^{1}\right)\subset
 T^{*}C\times\bigcup_{t\in[0,c]}\psi_{t}(B^{2n}_{R}\times S^{1})$ 
so 
\begin{equation}\label{eq:subset}
\mathcal{U}\subset \widehat{L}\left(T^{*}C\times\bigcup_{t\in[0,c]}\psi_{t}(B^{2n}_{R}\times S^{1})\right).
\end{equation}
Let $V:= \bigcup_{t\in[0,c]}\psi_{t}\left(B^{2n}_{R}\times S^{1}\right)\cap \{q_{n} = 0\}$
and $\pi :T^{*}\R^{n-1}\times \{0\}\times\R\times S^1\rightarrow T^{*}\R^{n-1}\times S^1$
be the canonical projection.
Since $\widehat{L}$ does not change the $q_n$-coordinate and coordinates of $T^{*}\R^{n-1}$,
inclusion (\ref{eq:subset}) implies
\begin{equation*}
\mathcal{U}_{0} \subset T^{*}C\times \pi(V).
\end{equation*}
But (\ref{eq:cpsiBr}) implies $V \subset \left(B^{2n}_{r}(0)\cap \{ q_n = 0 \}\right)\times S^{1}$ and
$\pi (B^{2n}_{r}(0)\cap \{ q_n = 0 \}) = B^{2n-2}_{r}(0)$, thus
\begin{equation}\label{eq:inc}
\mathcal{U}_{0} \subset T^{*}C\times B^{2n-2}_{r}(0)\times S^{1}.
\end{equation}
Since $\pi r^2\not\in\Z$, by Lemma~\ref{lemma:contgamma},
\begin{equation*}
\gamma \left(T^{*}C\times B^{2n-2}_{r}(0)\times S^{1}\right)
\leq \left\lceil\pi r^2 \right\rceil,
\end{equation*}
thus, Lemma~\ref{lem:capineq}, inclusion (\ref{eq:inc}) and inequality (\ref{eq:applem}) gives
\begin{equation*}
\left\lceil\pi R^2 \right\rceil
\leq
\gamma\left(\mathcal{U}_{0}\right) \leq
\gamma \left(T^{*}C\times B^{2n-2}_{r}(0)\times S^{1}\right)
\leq \left\lceil\pi r^2 \right\rceil,
\end{equation*}
a contradiction with $\pi R^2 > \ell > \pi r^2$.
\end{proof}

\bibliographystyle{amsplain}
\bibliography{biblioCC}

\end{document}